\renewenvironment{proof}[1][]{\vskip-\lastskip\par\vskip6pt plus2pt
minus0pt\par%
\noindent\textit{Proof.}\enspace\ignorespaces}{\hfill$\Box$\par\vskip6pt
plus2pt minus0pt}
\numberwithin{equation}{section}
\newtheorem{theorem}[equation]{Theorem}
\newtheorem{lemma}[equation]{Lemma}
\newtheorem{proposition}[equation]{Proposition}
\newtheorem{corollary}[equation]{Corollary}
\theoremstyle{definition}
\newtheorem{definition}[equation]{Definition}
\newtheorem{remark}[equation]{Remark}
\newtheorem{observations}[equation]{Observations}
\newtheorem{notation}[equation]{Notation}
\newtheorem{conventions}[equation]{Conventions}
\newtheorem{example}[equation]{Example}
\begin{document}
\title{Veldkamp Quadrangles and Polar Spaces}

\author{Bernhard M\"uhlherr and Richard M. Weiss}
\address{Mathematisches Institut \\
         Universit\"at Giessen,
         35392 Giessen, Germany}
\email{bernhard.m.muehlherr@math.uni-giessen.de}
\address{Department of Mathematics \\
         Tufts University \\
         Medford, MA 02155, USA}
\email{rweiss@tufts.edu}

\keywords{building, Tits polygon, polar space}
\subjclass[2000]{20E42, 51E12, 51E24}

\date{\today}

\begin{abstract}
Veldkamp polygons are certain graphs $\Gamma=(V,E)$ such that for each $v\in V$,
$\Gamma_v$ is  endowed with a symmetric anti-reflexive
relation $\equiv_v$. These relations are all trivial if and only if 
$\Gamma$ is a thick generalized polygon. A Veldkamp polygon is called flat if 
no two vertices have the same set of vertices that are opposite in a natural sense.
We explore the connection between Veldkamp 
quadrangles and polar spaces. Using this connection, we give the complete classification
of flat Veldkamp quadrangles in which some but not all of the relations $\equiv_v$ are trivial.
\end{abstract}

\maketitle

%\tableofcontents

\section{Introduction}\label{intro}
In \ref{toy6}, we introduce the notion of a Veldkamp polygon. A Veldkamp polygon
is a graph $\Gamma=(V,E)$ such that for each vertex $v\in V$, the set $\Gamma_v$ of vertices
adjacent to $v$ is endowed with a symmetric, anti-reflexive relation $\equiv_v$ satisfying
several axioms. The relations $\equiv_v$ are called the {\it local opposition relations}
of $\Gamma$. If $(u,v,w)$ is a path of length~$2$ and $u\equiv_vw$, we say that
the path $(u,v,w)$ is {\it straight}.
An arbitrary path is straight if every subpath of length~$2$ is straight. Thus every path is
straight if and only if the local opposition relations $\equiv_v$ are all trivial
(in the sense that $u\equiv_vw$ for all distinct $u,w\in\Gamma_v$).
This is the case if and only if $\Gamma$ is a thick generalized polygon. 

The thick generalized polygons that arise in the theory of algebraic groups all satisfy
the Moufang condition. This is a transitivity condition on the root groups. There
are natural extensions of the notion 
of a root group and the Moufang condition to Veldkamp polygons. A Veldkamp polgon
satisfying this Moufang condition is the same thing as a Tits polygon, a notion 
introduced in \cite{TP}. 

There is a standard way to construct a Tits polygon from an irreducible spherical building of 
rank at least~$3$ and a 
Tits index that is compatible in a suitable sense.
Every irreducible spherical building of rank at least~$3$ has at least one
compatible Tits index. Thus the theory of Tits polygons allows us, in particular,
to study an arbitrary spherical building as a rank~$2$ structure on which the methods
in \cite{TW} can be brought to bear (as in \cite{TP}--\cite{orth}). 
This was our original reason for 
introducing Tits polygons. In \cite{root}, we also showed that there is an equivalence between
Tits polygons and a certain class of root graded groups. 

In \cite[Def.~3.1]{V2}, Veldkamp introduced the notion of a Veldkamp plane. (Veldkamp 
called them Barbilian planes.)
This notion arose in his work on projective geometry over rings.
In the same paper Veldkamp showed the importance of the notion of
the stable rank of a ring (a notion he found in $K$-theory) in ring-geometry,
a subject first studied by Corrado Segre early in the 20th century (see \cite{V1}).
Veldkamp was able to show that many of the basic results of projective
geometry, with suitable modifications, hold over rings of stable rank~$2$. In particular, he showed that
the geometries that arise from a free module of rank~$3$ over an associative ring of stable
rank~$2$ and Veldkamp planes satisfying something slightly stronger than the Moufang condition 
(see \cite[Def.~4.16]{V2}) are essentially the same. This result was subsequently 
strengthened first by Faulkner in \cite[Thm~9]{faulkner} and then by 
ourselves in \cite[Thms.~7.2 and 7.3]{triangles}.

A Veldkamp plane as defined by Veldkamp is the same thing as a
Veldkamp $n$-gon as defined in \ref{toy6} with $n=3$, 
where the local opposition relations play the role of the relation
$\not\approx$ in \cite[Def.~3.1]{V2}. 
Thus our notion of a Veldkamp polygon is a natural generalization of the notion of a
Veldkamp plane.

We say that 
a Veldkamp polygon is {\it green} if some (or all) of its local opposition relations are trivial,
and we call a Veldkamp polygon {\it flat} if no two vertices have the same set of vertices 
that are opposite as defined in \ref{toy7}.
All generalized polygons are both green and flat.

In this paper, the focus is on Veldkamp quadrangles (i.e. a Veldkamp $n$-gon with $n=4$). 
We prove four theorems.
In our first theorem (see \ref{fla14}, \ref{fla20} and \ref{fla88}), 
we show that every green Veldkamp quadrangle $\Gamma$ has a canonical
flat quotient $\bar\Gamma$ (which is also a green Veldkamp quadrangle) and a natural 
surjective homomorphism
$\pi$ from $\Gamma$ to $\bar\Gamma$.

In our second theorem (see \ref{hau12} and \ref{hau13}), 
we show that there is a one-to-one correspondence (up to isomorphism) between
flat green Veldkamp quadrangles and non-degenerate polar spaces with thick lines.

The notion of a polar space
was also introduced by Veldkamp. Polar spaces of rank~$2$ are generalized quadrangles.
Non-degenerate polar spaces of finite rank at least~$3$ and characteristic
different from ~$2$ were classified by Veldkamp in \cite{V}. 
This result was extended to arbitrary characteristic
by Tits in \cite{spherical}. Several people contributed to the extension of Tits' result 
to include polar spaces of infinite rank. We cite \cite{shult} for this part of the 
classification. 

In our third theorem (see \ref{chin6}), 
we use the classification of non-degenerate polar spaces of rank at least~$3$
to show that every flat green Veldkamp quadrangle is either a generalized quadrangle or
a Tits quadrangle obtained from a quadratic space or a pseudo-quadratic space (which we 
interpret to include the symplectic case) or a Tits quadrangle of type~$D_3$ or type~$E_7$
as defined in \ref{chin50} and \ref{chin60}.
In particular, a flat green Veldkamp quadrangle that is not a generalized quadrangle always
satisfies the Moufang condition.

There are, of course, also many generalized quadrangles
that do not satisfy the Moufang condition.
Generalized quadrangles that do satisfy the Moufang condition were classified in \cite[17.4]{TW}.
There are six families. Three of them have a natural description in terms of polar spaces,
but three of them do not; see \ref{chin6a}. 

In our fourth theorem (see \ref{chin3} and \ref{chin5}), 
we show that to every non-degenerate quadratic space and
every non-degenerate pseudo-quadratic space (with infinite Witt index allowed), 
there does, in fact, exist a corresponding flat green Veldkamp quadrangle.

$$*\quad\qquad*\quad\qquad*$$
\begin{conventions}\label{pip40}
As in \cite{TW}, we set 
$$[a,b]=a^{-1}b^{-1}ab$$
for elements $a,b$ in a group and we apply permutations from left to right.
\end{conventions}

\medskip
\noindent
{\sc Acknowledgment:} The work of the first author was partially supported
by DFG Grant MU1281/7-1 and the work of the second author was partially supported
by Simons Foundation Collaboration Grant~516364.

\section{Veldkamp polygons}\label{poly}

In this section we introduce the notion of a Veldkamp polygon. In \ref{toy34},
we describe the connection between Veldkamp polygons and generalized polygons.

\begin{definition}\label{toy1}
An {\it opposition relation} on a set $X$ is a relation on $X$ 
that is symmetric and anti-reflexive. Let $\equiv$ be an opposition relation on $X$.
We say that two elements of $X$ are {\it opposite} if $x\equiv y$.
We say that $\equiv$ is {\it trivial} if every two distinct elements of $X$ are opposite.
\end{definition}

\begin{definition}\label{toy2}
Let $\equiv$ be an opposition relation on a set $X$. For each $k\ge1$,
we say that $\equiv$ is {\it $k$-plump} if for every subset $S$ of $X$
such that $|S|\le k$, there exists an element of $X$ that is opposite
every element of $S$. Note that if $\equiv$ is 
$k$-plump for some $k\ge1$, then it is also $(k-1)$-plump and $|X|\ge k+1$.
\end{definition}

\begin{notation}\label{toy3}
By {\it graph} we mean a pair $(V,E)$ consisting of a
set $V$ and a set $E$ of $2$-element subsets of $V$. The elements of
$V$ and $E$ are called {\it vertices} and {\it edges}. For each $v\in V$,
we let $\Gamma_v$ denote the set of {\it neighbors} of $v$. In other words,
$$\Gamma_v=\{u\in V\mid \{u,v\}\in E\}.$$
Let $\Gamma=(V,E)$ be a graph. A {\it subgraph} of $\Gamma$ is a pair $(V_1,E_1)$,
where $V_1$ is a non-empty subset of $V$ and $E_1$ is a subset of $\{e\in E\mid e\subset V_1\}$.
For each non-empty subset $V_1$ of $V$, the 
subgraph of $\Gamma$ {\it spanned by $V_1$} is the subgraph $(V_1,E_1)$, where 
$E_1=\{e\in E\mid e\subset V_1\}$.
An {\it $s$-path} in $\Gamma$ is a sequence $(x_0,x_1,\dots,x_s)$ 
of $s+1$ vertices $x_i$ for
some $s\ge0$ such that $x_{i-1}\in\Gamma_{x_i}$ for all $i\in[1,s]$.
and $x_{i-2}\ne x_i$ for all $i\in[2,s]$. (Here $[i,j]$ denotes the closed interval $i,i+1,\ldots,j$
if $i\le j$ and $[i,j]=\emptyset$ if $i>j$.) A {\it path} is an $s$-path for some
$s\ge0$; the parameter $s$ is called the {\it length} of the path. 
The graph $\Gamma$ is {\it connected} if there is a path from any given vertex
to any other given vertex. The distance ${\rm dist}(x,y)$ between two 
vertices is the minimal length of a path from $x$ to $y$ (assuming that there is one).
For each $v\in V$, let 
$$\Gamma_m(v)=\{u\in V\mid {\rm dist}(u,v)=m\}.$$
Thus $\Gamma_1(v)=\Gamma_v$ for all $v\in V$.
\end{notation}

\begin{notation}\label{toy3x}
An $s$-path is {\it closed} if its first and last vertices are the same and $s\ge3$. 
An {\it $s$-circuit} is the subgraph with vertex set $\{x_1,\ldots,x_s\}$ 
and edge set $\big\{\{x_{i-1},x_i\}\mid i\in[1,s]\big\}$ for some closed
$s$-path $(x_0,x_1,\ldots,x_s)$. A {\it circuit} is an $s$-circuit for some $s\ge3$. 
\end{notation}

\begin{notation}\label{toy4}
Let $\Gamma=(V,E)$ be a graph endowed with an opposition relation $\equiv_v$
on $\Gamma_v$ for each $v\in V$.
We say that two vertices 
$x,y$ are {\it opposite at $v$} if $x,y\in\Gamma_v$ and $x\equiv_v y$. 
A path $(v_0,v_1,\ldots,v_s)$ is called {\it straight} if 
$v_{i-1}$ is opposite $v_{i+1}$ at $v_i$ for all $i\in[1,s-1]$.
A circuit is {\it straight} if every path it contains is straight.
We say that $\Gamma$ is {\it $k$-plump} for some $k\ge1$ if all its local 
opposition relations are $k$-plump as defined in \ref{toy2}. 
\end{notation}

\begin{definition}\label{toy5}
A {\it Veldkamp graph} is a graph $\Gamma=(V,E)$ 
endowed with a $2$-plump opposition relation on $\Gamma_v$ for each $v\in V$.
\end{definition}

\begin{remark}\label{toy5x}
Since every Veldkamp graph is $2$-plump (and therefore also $1$-plump),
it follows that for every $s\ge0$,
a straight $s$-path in a Veldkamp graph can be extended to a straight $(s+1)$-path.
\end{remark}

\begin{definition}\label{toy6}
Let $n\ge2$. A {\it Veldkamp $n$-gon} is a Veldkamp graph $\Gamma$ 
such that the following hold:
\begin{enumerate}[\rm(VP1)]
\item $\Gamma$ is connected and bipartite.
\item For each $k\in[1,n-1]$ and each straight $k$-path $\alpha=(v_0,\ldots,v_k)$,
$\alpha$ is the unique straight path from $v_0$ to $v_k$ of length at most~$k$.
\item Every straight $(n+1)$-path is contained in a straight $2n$-circuit.
\end{enumerate}
Note that by (VP2), the straight $2n$-circuit in (VP3) is unique. A {\it Veldkamp polygon}
is a Veldkamp $n$-gon for some $n\ge2$.
\end{definition}

\begin{definition}\label{toy7}
Let $\Gamma=(V,E)$ be a Veldkamp $n$-gon for some $n\ge2$. A {\it root} in $\Gamma$ is  
a straight $n$-path. Two vertices $x$ and $y$
are {\it opposite} if there is a root from $x$ to $y$
and two edges $e,f$ of $\Gamma$ are {\it opposite} if there exists a straight $(n+1)$-path
whose first and last edges are $e$ and $f$. By (VP2), neither a vertex nor an edge
can be opposite itself, so both of these relations are, in fact, opposition relations
(on $V$ and on $E$). The set of vertices opposite
a given vertex $x$ is denoted by $x^{\rm op}$ and $\Gamma$ is called {\it flat}
if $x^{\rm op}=y^{\rm op}$ for $x,y\in V$ implies that $x=y$.
\end{definition}

Suppose for the rest of this section that $\Gamma=(V,E)$ is a Veldkamp $n$-gon for some $n\ge2$.

\begin{proposition}\label{toy8}
Suppose that $x,y\in V$ are opposite and that $z\in\Gamma_y$. Then there exists a unique
root $(x,\ldots,z,y)$ from $x$ to $y$ containing $z$. 
\end{proposition}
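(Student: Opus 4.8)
The plan is to prove uniqueness directly from (VP2) and existence by a pivoting argument that combines $2$-plumpness of $\equiv_y$ with the circuit axiom (VP3). Throughout I call the vertex of a root $(x,\ldots,z,y)$ adjacent to $y$ its \emph{penultimate} vertex, so the statement asks for a unique root from $x$ to $y$ whose penultimate vertex is the prescribed neighbor $z$.

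First I would dispose of uniqueness. Suppose two roots from $x$ to $y$ have penultimate vertex $z$. Deleting the final vertex $y$ from each yields two straight $(n-1)$-paths from $x$ to $z$. Since $n-1\in[1,n-1]$, axiom (VP2) says that a straight $(n-1)$-path is the unique straight path between its endpoints of length at most $n-1$; hence the two truncations coincide, and therefore so do the two roots. This part is immediate and is not where the difficulty lies.

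For existence, fix a root $\rho=(x=a_0,a_1,\ldots,a_{n-1},a_n=y)$, which exists because $x$ and $y$ are opposite (Definition~\ref{toy7}). If $z=a_{n-1}$ we are done, so assume $z\ne a_{n-1}$. The heart of the matter is the following claim: \emph{if $R=(x,\ldots,p,y)$ is a root with penultimate vertex $p$ and $q\in\Gamma_y$ satisfies $q\equiv_y p$, then there is a root with penultimate vertex $q$.} Granting this, I would use $2$-plumpness of $\equiv_y$ to choose a vertex $w\in\Gamma_y$ opposite both $a_{n-1}$ and $z$ at $y$ (apply $2$-plumpness to the set $\{a_{n-1},z\}$), and then apply the claim twice: first to pass from $\rho$ to a root with penultimate vertex $w$ (legitimate since $w\equiv_y a_{n-1}$), and then from that root to a root with penultimate vertex $z$ (legitimate since $z\equiv_y w$). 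This produces the desired root $(x,\ldots,z,y)$.

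To prove the claim I would argue via (VP3). Since $q\equiv_y p$, appending $q$ to $R$ gives a straight $(n+1)$-path $(x,\ldots,p,y,q)$ (the only new incidence to check, straightness at $y$, is exactly $p\equiv_y q$). By (VP3) this path lies in a straight $2n$-circuit $C$. In $C$ the vertices $x$ and $y$ are antipodal, so $C$ is the union of two straight arcs of length $n$ from $x$ to $y$; one arc is $R$ itself (its $y$-neighbor is $p$), while the other arc has $q$ as the $y$-neighbor, i.e.\ it is a root with penultimate vertex $q$, which is the claim. The main obstacle is precisely this existence step, and specifically the passage to an \emph{arbitrary} neighbor $z$ of $y$: the circuit (VP3) only directly yields the arc whose $y$-neighbor is opposite the $y$-neighbor of the starting root, so one cannot in general reach $z$ in a single step. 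The device that overcomes this is the interposed vertex $w$ opposite both $a_{n-1}$ and $z$, whose existence is guaranteed by $2$-plumpness; this reduces the general case to two applications of the opposite case handled by (VP3). Along the way one should also verify the routine points that the auxiliary paths are genuine (no backtracking) and that the $2n$-circuit is non-degenerate, both of which follow from (VP2).
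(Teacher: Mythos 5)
Your proposal is correct and follows essentially the same route as the paper: uniqueness from (VP2) applied to the truncated paths, and existence by using $2$-plumpness at $y$ to interpose a vertex opposite both the old penultimate vertex and $z$, followed by two applications of (VP3).
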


\begin{proof}
Uniqueness holds by (VP2). Since $x$ and $y$ are opposite, there is a root
$(x,\ldots,w,y)$ from $x$ to $y$. Since $\Gamma$ is $2$-plump, 
we can choose $u\in\Gamma_y$ opposite $w$ and $z$
at $y$. Thus $(x,\ldots,w,y,u)$ is a straight $(n+1)$-path. By (VP3),
it follows that there exists a root $(x,\ldots,u,y)$ from $x$ to $y$ that contains
$u$. Then $(x,\ldots,u,y,z)$ is a straight $(n+1)$-path and the claim follows by a second
application of (VP3).
\end{proof}

\begin{remark}\label{toy9xx}
Suppose that $u,v\in V$ and ${\rm dist}(u,v)$ is even. Since $\Gamma$ is bipartite,
every vertex in $u^{\rm op}$ is at even distance to every vertex in $v^{\rm op}$
\end{remark}

\begin{proposition}\label{toy9x}
For each pair $e_1,e_2$ of edges of $\Gamma$, there exists an
edge that is opposite both $e_1$ and $e_2$.
\end{proposition}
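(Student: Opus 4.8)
The plan is to reduce the statement to a question about single vertices and then to attack that question by induction on a distance, the base case of which is a clean ``swap'' argument and the inductive step of which carries the real difficulty.

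\emph{Reduction to a common opposite vertex.} Write $e_1=\{a_0,a_1\}$ and $e_2=\{b_0,b_1\}$, where by (VP1) I may label so that $a_0,b_0$ lie in one part of the bipartition and $a_1,b_1$ in the other. I claim it suffices to produce a single vertex $c$ that is opposite to both $a_0$ and $b_0$. Indeed, suppose such a $c$ exists. Since $c$ is opposite $a_0$ and $a_1\in\Gamma_{a_0}$, Proposition~\ref{toy8} supplies a root $(c,w_1,\ldots,a_1,a_0)$ from $c$ to $a_0$ through $a_1$; call its second vertex $w_1$. Likewise there is a root $(c,w_1',\ldots,b_1,b_0)$ from $c$ to $b_0$ through $b_1$, with second vertex $w_1'$. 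As $\equiv_c$ is $2$-plump (\ref{toy2}, \ref{toy4}), I can pick a neighbour $c'$ of $c$ with $c'\equiv_c w_1$ and $c'\equiv_c w_1'$. Then $(c',c,w_1,\ldots,a_1,a_0)$ and $(c',c,w_1',\ldots,b_1,b_0)$ are straight $(n+1)$-paths (only the initial triple is new, and it is straight by the choice of $c'$), so the edge $f=\{c,c'\}$ is opposite $e_1$ via the first and opposite $e_2$ via the second. Thus the whole proposition follows once I know that any two vertices in the same part have a common opposite.

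\emph{The vertex statement and its base case.} By Remark~\ref{toy9xx} the correct hypothesis is that ${\rm dist}(a,b)$ is even, and two same-part vertices satisfy exactly this. I would induct on $d={\rm dist}(a,b)$. For $d=0$, extend the trivial path $(a)$ to a root by Remark~\ref{toy5x}. For $d=2$, let $m$ be a common neighbour of $a$ and $b$; using $2$-plumpness of $\equiv_m$, choose $m^\ast\in\Gamma_m$ with $a\equiv_m m^\ast$ and $b\equiv_m m^\ast$, and extend the straight path $(a,m,m^\ast)$ to a root $(a,m,m^\ast,v_3,\ldots,v_n=c)$ (Remark~\ref{toy5x}). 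Replacing the first vertex $a$ by $b$ alters only the initial triple, which remains straight because $b\equiv_m m^\ast$; hence $(b,m,m^\ast,v_3,\ldots,v_n)$ is again a root and $c$ is opposite both $a$ and $b$.

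\emph{The inductive step and the main obstacle.} For $d\ge4$ I would take a geodesic $(a=g_0,\ldots,g_d=b)$, set $b^-=g_{d-2}$, and let $g_{d-1}$ be the common neighbour of $b^-$ and $b$. Using $2$-plumpness of $\equiv_{g_{d-1}}$ I first fix $r\in\Gamma_{g_{d-1}}$ with $b^-\equiv_{g_{d-1}}r$ and $b\equiv_{g_{d-1}}r$. If I could extend the straight path $(b^-,g_{d-1},r)$ to a root whose far endpoint $c$ is opposite $a$, then the same swap as above (legitimate since $b\equiv_{g_{d-1}}r$) would convert that root into a root from $b$ to $c$, making $c$ a common opposite of $a$ and $b$. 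The hard part will be exactly this extension. The inductive hypothesis only gives \emph{some} common opposite of $a$ and $b^-$ (note ${\rm dist}(a,b^-)=d-2$), and Proposition~\ref{toy8} lets me re-route the associated root out of $b^-$ through $g_{d-1}$, but it does \emph{not} let me simultaneously force the third vertex to equal the chosen $r$; indeed uniqueness (VP2) shows that the roots from $a$ and from $b$ to a common opposite cannot both be routed along the geodesic through $b^-$, so no purely local hinge-swapping suffices. I therefore expect the decisive ingredient to be a transport (connectivity) property of the opposite-set $a^{\rm op}$: that one can move within $a^{\rm op}$ across the distance-$2$ hinge at $g_{d-1}$ so as to decrease the distance to $b$, with the parities kept consistent throughout by Remark~\ref{toy9xx}. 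Establishing this connectivity statement is where the substance of the argument lies.
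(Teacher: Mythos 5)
Your reduction in the first paragraph is sound, and your base cases are fine, but the inductive step for $d\ge4$ is a genuine gap, not merely a deferred technicality: as you yourself observe, the inductive hypothesis hands you \emph{some} common opposite $c$ of $a$ and $b^-$, and while Proposition~\ref{toy8} lets you re-route the root from $c$ to $b^-$ so that it passes through the hinge $g_{d-1}$, it gives you no control over which neighbour of $g_{d-1}$ that root uses, so you cannot arrange for it to be your chosen $r$ with $b\equiv_{g_{d-1}}r$. The ``transport within $a^{\rm op}$'' property you hope for is essentially Corollary~\ref{toy9} itself (which the paper derives \emph{from} this proposition), so there is no way to close the induction at the vertex level without circularity. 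The structural problem is that carrying only a common opposite \emph{vertex} through the induction discards too much information, and the adjustment you need must be made at the near end of the configuration, where you have no freedom.

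The paper's proof inverts exactly this: it inducts on the length $k$ of a path $(x_0,\dots,x_k)$ joining the two edges and carries a common opposite \emph{edge} $f$ of $c_1=\{x_0,x_1\}$ and $c_{k-1}=\{x_{k-2},x_{k-1}\}$ through the induction. Because $f$ is opposite $c_{k-1}$, the vertex $y$ of $f$ opposite $x_{k-1}$ is already in hand; \ref{toy8} re-routes the root from $x_{k-1}$ to $y$ through the new vertex $x_k$ (this is legitimate because one specifies the neighbour of the \emph{endpoint} $x_{k-1}$, not an interior vertex), and likewise produces a root from the correct endpoint of $c_1$ to $y$. The only free choice --- a vertex $z\in\Gamma_y$ opposite at $y$ to the two neighbours of $y$ on these roots --- is then made at the \emph{far} vertex $y$ by $2$-plumpness, which is precisely the move you make in your (correct) reduction paragraph. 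In short: your final assembly step is right, but it must be performed at every stage of an edge-by-edge induction along a path, rather than once at the end of a vertex-distance induction.
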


\begin{proof}
Let $(x_0,\ldots,x_k)$ be a $k$-path in $\Gamma$ for some $k\ge1$ and 
let $c_i$ denote the edge $\{x_{i-1},x_i\}$ for each $i\in[1,k]$.
Since $\Gamma$ is connected, it will suffice 
to show that there exists an edge opposite both $c_1$ and $c_k$.
We proceed by induction with respect to $k$. By \ref{toy5x},
the claim holds for $k=1$. Suppose that $k\ge2$ and that
$f$ is an edge that is opposite $c_1$ and $c_{k-1}$.
Let $y$ be the unique vertex of $f$ that is opposite $x_{k-1}$.
Since $\Gamma$ is bipartite, 
$y$ is opposite to a unique vertex on $c_1$. By \ref{toy9xx},
this vertex is at even distance from $x_{k-1}$. Thus $y$ is opposite $x_p$,
where $p=0$ if $k$ is odd and $p=1$ if $k$ is even.

Let $\alpha=(x_{k-1},x_k,\ldots,y)$ be the unique root
from $x_{k-1}$ to $y$ containing $x_k$ and let $u$ be unique the element of $\Gamma_y$ contained
in $\alpha$. If $k$ is odd, let
$\beta=(x_0,x_1,\ldots,y)$ be the unique root from $x_0$ to $y$ containing $x_1$
and if $k$ is even, let $\beta=(x_1,x_0,\ldots,y)$ be the unique root from 
$x_1$ to $y$ containing $x_0$.
In both cases, let $v$ be unique element of $\Gamma_y$ contained in $\beta$.
Since $\Gamma$ is $2$-plump, we can choose a vertex $z\in\Gamma_y$ that 
is opposite $u$ and $v$. Let $e=\{y,z\}$. Attaching $z$ at the end of $\alpha$ and at the
end of $\beta$ yields straight $(n+1)$-paths. Hence $e$ is an edge opposite both $c_1$ and $c_k$.
\end{proof}

\begin{corollary}\label{toy9}
Let $u,v$ be vertices such that ${\rm dist}(u,v)$ is even.
Then $u^{\rm op}\cap v^{\rm op}\ne\emptyset$.
\end{corollary}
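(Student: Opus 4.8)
The plan is to deduce this from the statement about opposite \emph{edges} in Proposition~\ref{toy9x}, using the bipartite structure of $\Gamma$ to control parities. First I would fix an edge $e=\{u,u'\}\in E$ incident with $u$ and an edge $f=\{v,v'\}\in E$ incident with $v$; such edges exist because $\Gamma$ is $2$-plump, so each $\Gamma_w$ is non-empty. By Proposition~\ref{toy9x}, there is then an edge $g=\{y,z\}$ that is opposite both $e$ and $f$.

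Next I would unwind the definition of opposite edges. Since $e$ is opposite $g$, there is a straight $(n+1)$-path whose first edge is $e$ and whose last edge is $g$; write it as $(w_0,\ldots,w_{n+1})$ with $\{w_0,w_1\}=e$ and $\{w_n,w_{n+1}\}=g$. Then both $(w_0,\ldots,w_n)$ and $(w_1,\ldots,w_{n+1})$ are roots, so $w_0$ is opposite $w_n$ and $w_1$ is opposite $w_{n+1}$. As $\{w_0,w_1\}=\{u,u'\}$ and $\{w_n,w_{n+1}\}=\{y,z\}$, this shows in either orientation that $u$ is opposite one of the two endpoints of $g$. The identical argument applied to $f$ shows that $v$ is opposite one of the two endpoints of $g$.

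The crux—and the point at which the hypothesis that ${\rm dist}(u,v)$ is even enters—is to check that $u$ and $v$ are opposite the \emph{same} endpoint of $g$. Because $\Gamma$ is bipartite and the two endpoints $y,z$ of $g$ lie in opposite parts, any single vertex can be opposite at most one of $y,z$: a root has length $n$, so a given vertex is opposite only those vertices whose part is determined by its own part together with the parity of $n$. Since ${\rm dist}(u,v)$ is even, $u$ and $v$ lie in the same part, so the endpoint of $g$ opposite $u$ and the endpoint opposite $v$ are forced to coincide, say at $y$. Then $y\in u^{\rm op}\cap v^{\rm op}$, as required. I expect the only genuine subtlety to be exactly this bookkeeping: one must verify that opposition between a vertex and an endpoint of $g$ respects the bipartition in the manner claimed, so that the two a~priori independent existential endpoints are in fact equal rather than merely both present; once the parity is pinned down the conclusion is immediate.
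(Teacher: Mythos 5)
Your proof is correct and follows essentially the same route as the paper: take edges $e\ni u$ and $f\ni v$, apply Proposition~\ref{toy9x} to get an edge $g$ opposite both, and use bipartiteness to force the endpoint of $g$ opposite $u$ to coincide with the endpoint opposite $v$. The parity bookkeeping you flag as the crux is exactly what the paper isolates in Remark~\ref{toy9xx}, and your version of it is sound.
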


\begin{proof}
Let $e$ be an edge containing $u$ and let $f$ be an edge containing $v$.
By \ref{toy9x}, there exists an edge $g$ opposite both $e$ and $f$.
Let $x$ be the unique vertex of $g$ that is opposite $u$ 
and let $y$ be the unique vertex of $g$ that is opposite $v$.
By \ref{toy9xx}, $x=y$ and thus $x\in u^{\rm op}\cap v^{\rm op}$.
\end{proof}

\begin{definition}\label{toy39}
We recall that a {\it generalized $n$-gon} for some $n\ge2$ is a bipartite graph $\Omega$ such that
the following hold:
\begin{enumerate}[\rm(i)]
\item ${\rm diam}(\Omega)=n$.
\item $\Omega$ contains no circuits of length less than $2n$.
\item $|\Omega_v|\ge2$ for all vertices $v$.
\end{enumerate}
A generalized $n$-gon $\Omega$ is {\it thick} if $|\Omega_v|\ge3$ for all vertices $v$.

\end{definition}

\begin{proposition}\label{toy34}
If the local opposition relations are all trivial, then $\Gamma$ is a thick generalized $n$-gon,
and every thick generalized $n$-gon, when endowed with the trivial local opposition relation
at every vertex, is a Veldkamp $n$-gon.
\end{proposition}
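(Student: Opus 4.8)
The plan is to prove the two implications separately, using throughout the observation from the introduction that when every $\equiv_v$ is trivial a path is straight precisely when it is non-backtracking, so that in this case ``straight path'' and ``path'' carry the same meaning. Comparing the two definitions, the work is to match the Veldkamp axioms (VP1)--(VP3) against the three conditions of \ref{toy39}.

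For the forward implication I assume all local opposition relations are trivial and verify the conditions of \ref{toy39} in the order (iii), (ii), (i). Thickness (iii) is immediate: a trivial relation on $\Gamma_v$ is $2$-plump exactly when $|\Gamma_v|\ge3$, so $2$-plumpness forces $|\Gamma_v|\ge3$ at every vertex. For the girth condition (ii), I would suppose $\Gamma$ contained a circuit of length $2m$ with $m\le n-1$ (its length is even since $\Gamma$ is bipartite, and $m\ge2$ since $\Gamma$ is simple); cutting the circuit at a pair of antipodal vertices then produces two distinct straight paths of length $m\le n-1$ between them, contradicting the uniqueness in (VP2) with $k=m$. Hence there is no circuit of length less than $2n$. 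For the diameter (i), I first show $\mathrm{dist}(u,v)\le n$ for all $u,v$: if some geodesic had length $\ge n+1$, its initial subpath of length $n+1$ is straight, so by (VP3) it lies on a straight $2n$-circuit, on which its two endpoints are joined by the complementary arc of length $n-1$ --- contradicting that a geodesic realizes distance $n+1$ between them. To see the diameter is exactly $n$, I extend a vertex to a straight $(n+1)$-path using \ref{toy5x}, apply (VP3) to obtain a $2n$-circuit, and note that its antipodal vertices are joined by two arcs of length $n$; any shorter path between them would, together with one arc, close into a circuit of length $<2n$, contradicting (ii).

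For the converse, let $\Omega$ be a thick generalized $n$-gon equipped with the trivial relations. Then $\Omega$ is $2$-plump because thickness gives $|\Omega_v|\ge3$, and (VP1) holds because a generalized $n$-gon is bipartite and, having finite diameter, connected. For (VP2) with $k\le n-1$, a non-backtracking $k$-path $\alpha$ from $v_0$ to $v_k$ must be a geodesic --- otherwise a shorter path together with $\alpha$ would close into a circuit of length at most $2(n-1)<2n$, contradicting condition (ii) of \ref{toy39} --- and the same bound forces it to be the unique path of length $\le k$ between its ends. For (VP3), given a non-backtracking $(n+1)$-path $(v_0,\dots,v_{n+1})$, the subpath $(v_0,\dots,v_n)$ has length equal to the diameter $n$, so $v_0$ and $v_n$ are opposite and $\mathrm{dist}(v_0,v_{n+1})=n-1$; I then attach the unique geodesic from $v_{n+1}$ back to $v_0$ to close the path into a walk of length $2n$ and argue that it is a genuine $2n$-circuit.

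The recurring technical device, and the point I expect to require the most care, is the passage from two distinct paths sharing their endpoints (or from a short closed walk) to an honest circuit whose length is bounded by the sum of the path lengths; this is exactly what converts the girth hypothesis into the uniqueness and diameter statements above. I expect the crux to be the closing step of (VP3) in the converse, where one must exclude any accidental coincidence between the attached geodesic and the original $(n+1)$-path, since such a coincidence would again yield a circuit shorter than $2n$ and so is ruled out by the girth bound.
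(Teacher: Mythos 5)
Your proposal is correct and follows essentially the same route as the paper: a direct matching of (VP1)--(VP3) against the three conditions of \ref{toy39}, using that triviality of the relations makes ``straight'' synonymous with ``non-backtracking.'' The paper's own proof is simply a terser version of yours, leaving implicit the standard girth arguments (two distinct short paths or a short closed non-backtracking walk yield a circuit of length less than $2n$) that you spell out and correctly identify as the only delicate points.
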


\begin{proof}
Suppose that all local opposition relations of $\Gamma$ are trivial. Then every path is straight.
By (VP1), $\Gamma$ is connected. By (VP3), therefore, ${\rm diam(\Gamma)}\le n$.
By (VP2), $\Gamma$ contains no circuits
of length less than~$2n$. This implies that ${\rm diam}(\Gamma)\ge n$.
Since $\Gamma$ is $2$-plump, we have
$|\Gamma_x|\ge3$ for all vertices $x$. It follows that $\Gamma$ is a thick generalized $n$-gon.

Suppose, conversely, that $\Omega$ is a thick generalized
$n$-gon endowed with the trivial local opposition relation at every vertex, 
so that every path is straight. 
Since $\Omega$ is thick, it is $2$-plump.
By \ref{toy39}(ii), $\Omega$ satisfies (VP2) and by \ref{toy39}(i),
it follows that (VP1) and (VP3) hold. Thus $\Omega$ is a Veldkamp $n$-gon.
\end{proof}

\begin{proposition}\label{toy35}
Suppose that $(x_0,x_1,\ldots,x_n)$ and $(y_0,y_1,\ldots,y_n)$ are two roots
with $x_0=y_0$ and $x_n=y_n$. Then $x_1$ and $y_1$ are opposite at $x_0$
if and only if $x_{n-1}$ and $y_{n-1}$ are opposite at $x_n$.
\end{proposition}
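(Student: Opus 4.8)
The plan is to exploit the symmetry between the two ends of the roots and to realize both roots inside a single straight $2n$-circuit produced by (VP3). First I would reduce to one implication. Reversing a straight path again yields a straight path, since each relation $\equiv_v$ is symmetric; hence $(x_n,x_{n-1},\dots,x_0)$ and $(y_n,y_{n-1},\dots,y_0)$ are roots with the same common endpoints, and under this reversal the pair $(x_1,y_1)$ at $x_0$ is interchanged with the pair $(x_{n-1},y_{n-1})$ at $x_n$. Thus it suffices to prove that $x_1$ opposite $y_1$ at $x_0$ implies $x_{n-1}$ opposite $y_{n-1}$ at $x_n$; the converse is this same implication applied to the reversed roots.

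So assume $x_1\equiv_{x_0}y_1$. Then $(y_1,x_0,x_1,\dots,x_n)$ is a straight $(n+1)$-path: its initial subpath $(y_1,x_0,x_1)$ is straight by the hypothesis, and the remainder is the root $(x_0,\dots,x_n)$. By (VP3) this path lies on a unique straight $2n$-circuit, which I would write as $v_0,v_1,\dots,v_{2n-1}$ with indices read modulo $2n$, arranging that $x_i=v_i$ for $i\in[0,n]$ and $y_1=v_{2n-1}$. Since the $(n+1)$-path has $n+2\le 2n$ vertices (using $n\ge2$), it occupies distinct consecutive vertices of the circuit, so this labelling is forced.

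The key step is then to locate $y_{n-1}$ inside the circuit. Reading the circuit in the direction of decreasing index produces the straight $n$-path $(v_0,v_{2n-1},v_{2n-2},\dots,v_{n+1},v_n)$, that is, a root from $x_0=v_0$ to $x_n=v_n$ whose second vertex is $y_1=v_{2n-1}$. By the uniqueness of the root joining the opposite vertices $x_0$ and $x_n$ through a prescribed neighbor of $x_0$, which follows from (VP2) exactly as in the uniqueness part of \ref{toy8}, this root must coincide with $(y_0,y_1,\dots,y_n)$. Comparing penultimate vertices gives $y_{n-1}=v_{n+1}$. Since $x_{n-1}=v_{n-1}$ and $x_n=v_n$, and $(v_{n-1},v_n,v_{n+1})$ is a path contained in the straight circuit and hence straight (see \ref{toy4}), we conclude $x_{n-1}\equiv_{x_n}y_{n-1}$, as required.

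I expect the main obstacle to be the bookkeeping in this key step rather than any conceptual difficulty: one must check that the straight $(n+1)$-path embeds as genuinely consecutive (hence distinct) vertices of the circuit, fix the orientation so that the root through $y_1$ is traversed in the correct direction, and apply the uniqueness statement for roots through a given neighbor in precisely the right place in order to identify $y_{n-1}$ with $v_{n+1}$. Once the indices are pinned down, the opposition of $x_{n-1}$ and $y_{n-1}$ at $x_n$ is immediate from the straightness of the circuit, and the reversal argument of the first paragraph supplies the remaining implication.
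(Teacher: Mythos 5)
Your proof is correct and follows essentially the same route as the paper: extend one root to a straight $(n+1)$-path using the hypothesized opposition, invoke (VP3) to close it into a straight $2n$-circuit, identify the complementary half of the circuit with the second root via the uniqueness in (VP2), and read off the opposition at the other end from the straightness of the circuit. The only differences are cosmetic (you attach $y_1$ at the front where the paper attaches $y_{n-1}$ at the back, and you make the circuit bookkeeping explicit), so no further comment is needed.
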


\begin{proof}
Suppose that $x_{n-1}$ and $y_{n-1}$ are opposite at $x_n$. Then
$$(x_0,x_1,\ldots,x_{n-1},x_n,y_{n-1})$$ 
is a straight $(n+1)$-path.
By (VP3), there exists a root $\alpha:=(z_0,z_1,\ldots,z_n)$ such that 
$z_0=x_0$, $z_{n-1}=y_{n-1}$, $z_n=x_n$ and $z_1$ is opposite $x_1$ at $x_0$.
By (VP2), it follows that $\alpha=(y_0,y_1,\ldots,y_n)$. Hence $x_1$ is opposite $y_1$ at $x_0$.
The converse holds by symmetry.
\end{proof}

\begin{proposition}\label{toy36}
Suppose that $\equiv_v$ is trivial for some $v\in V$. 
Then $\equiv_w$ is trivial for all $w$
at even distance from $v$ and if, in addition, $n$ is odd, then $\equiv_w$ is trivial
for all $w\in V$.
\end{proposition}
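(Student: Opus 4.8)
The plan is to isolate one propagation step and derive both assertions from it: \emph{if $\equiv_v$ is trivial and $w\in v^{\rm op}$, then $\equiv_w$ is trivial.} To prove this I would take distinct $a,b\in\Gamma_w$ and, using \ref{toy8}, pick the unique roots $\alpha=(v,x_1,\dots,x_{n-1},x_n)$ and $\beta=(v,y_1,\dots,y_{n-1},y_n)$ from $v$ to $w$ with $x_n=y_n=w$, $x_{n-1}=a$ and $y_{n-1}=b$. The crucial point is that a root from $v$ to $w$ is determined by its second vertex: by \ref{toy8} applied to the opposite pair $(w,v)$ and the vertex $x_1\in\Gamma_v$ there is a unique root from $w$ to $v$ with second-to-last vertex $x_1$, hence (reversing) a unique root from $v$ to $w$ with second vertex $x_1$. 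Thus $x_1=y_1$ would force $\alpha=\beta$ and so $a=b$; since $a\ne b$ we conclude $x_1\ne y_1$, and triviality of $\equiv_v$ gives $x_1\equiv_v y_1$. Feeding the two roots $\alpha,\beta$ into \ref{toy35} (they share the endpoints $v$ and $w$) then yields $a\equiv_w b$. As $a,b$ were arbitrary, $\equiv_w$ is trivial.

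Granting this, the first assertion follows at once. Given $w$ at even distance from $v$, \ref{toy9} provides a vertex $t\in v^{\rm op}\cap w^{\rm op}$. Applying the propagation step to $v$ and $t$ shows $\equiv_t$ is trivial, and applying it again to $t$ and $w$ shows $\equiv_w$ is trivial; note that this argument is insensitive to the parity of $n$.

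For the second assertion, with $n$ odd, I would first note that $v^{\rm op}\ne\emptyset$ — a straight $n$-path out of $v$ exists by repeatedly extending the $0$-path $(v)$ via \ref{toy5x} — and fix $t\in v^{\rm op}$. The propagation step makes $\equiv_t$ trivial, and since $\Gamma$ is bipartite and $n$ is odd, $t$ lies in the part not containing $v$. Hence every $w\in V$ is at even distance from $v$ or from $t$, and the first assertion (with base vertex $v$, respectively $t$) makes $\equiv_w$ trivial. I expect the only real obstacle to be the propagation step, and within it the point — forced by the uniqueness in \ref{toy8} — that distinct neighbors of $w$ are reached from $v$ by roots with distinct second vertices; once that is in place, \ref{toy35}, \ref{toy9} and bipartiteness do the rest.
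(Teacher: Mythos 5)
Your proof is correct and follows essentially the same route as the paper's: propagate triviality from one end of a root to the other via \ref{toy35}, handle vertices at even distance via the common opposite supplied by \ref{toy9}, and use bipartiteness for odd $n$. The only difference is that you spell out the key detail---that distinct neighbors of $w$ are reached by roots from $v$ with distinct second vertices, forced by the uniqueness in \ref{toy8}---which the paper leaves implicit in its one-line appeal to \ref{toy35}.
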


\begin{proof}
Let $(x_0,\ldots,x_n)$ be a root.
By \ref{toy35}, $\equiv_{x_0}$ is trivial if and only if $\equiv_{x_n}$ is trivial.
Thus if ${\rm dist}(v,w)$ is even for some $w\in V$, then by \ref{toy9},
$\equiv_w$ is trivial. 
If $n$ is odd, then by (VP1), every vertex of $V$ is at even distance from 
$x_0$ or from $x_n$, so if one local opposition relation is trivial, then they all are.
\end{proof}

\section{Veldkamp quadrangles}

In this section we focus on the case $n=4$. Let $\Gamma=(V,E)$ be a Veldkamp quadrangle.
Since $\Gamma$
is connected and bipartite, there is a unique decomposition of $V$ into two subsets
such that every edge contains one vertex from each of them. We choose one of these
two subsets and call it $P$ and the other we call $L$.
We refer to the elements of $P$ as {\it points} and to the elements of $L$ as {\it lines}.

\begin{notation}\label{toy10}
We call $\Gamma$ {\it green} if the local opposition relations at lines are all trivial.
\end{notation}

We assume for the rest of this section
that our Veldkamp quadrangle $\Gamma$ is green.

\begin{proposition}\label{fla3}
$\Gamma$ does not contain circuits of length~$4$.
\end{proposition}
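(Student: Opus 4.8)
The plan is to argue by contradiction. Suppose $\Gamma$ contains a circuit of length~$4$, arising from a closed $4$-path $(x_0,x_1,x_2,x_3,x_4)$ with $x_4=x_0$. Since $\Gamma$ is bipartite by (VP1), with vertex classes $P$ and $L$, such a circuit must alternate between points and lines. First I would check, using the path conditions in \ref{toy3} (which forbid $x_{i-2}=x_i$) together with the bipartite structure, that the four vertices are genuinely distinct and that the circuit consists of two distinct points $p,p'\in P$ and two distinct lines $\ell,\ell'\in L$ with $p$ and $p'$ each adjacent to both $\ell$ and $\ell'$.

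Next I would produce two competing straight $2$-paths between the \emph{same} pair of vertices. Consider $(p,\ell,p')$ and $(p,\ell',p')$. Both are legitimate $2$-paths because $p\ne p'$. Since $\Gamma$ is green, the local opposition relation $\equiv_\ell$ is trivial, so the distinct neighbours $p,p'$ of $\ell$ satisfy $p\equiv_\ell p'$; hence $(p,\ell,p')$ is straight. Applying the same reasoning at $\ell'$ shows that $(p,\ell',p')$ is straight as well.

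Finally I would invoke (VP2). With $n=4$ we have $k=2\in[1,n-1]$, so the straight $2$-path $(p,\ell,p')$ is the unique straight path from $p$ to $p'$ of length at most~$2$. But $(p,\ell',p')$ is also a straight path from $p$ to $p'$ of length~$2$, and therefore uniqueness forces $\ell=\ell'$, contradicting $\ell\ne\ell'$. This contradiction shows that no $4$-circuit can exist.

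The entire argument lives in the setup, and the one point that must be read off correctly is that greenness is exactly the hypothesis that upgrades both connecting $2$-paths to \emph{straight} $2$-paths, which is what makes (VP2) applicable: without greenness one would only know that the two $2$-paths exist, not that they are straight, and (VP2) constrains only straight paths. I therefore do not expect a genuine obstacle beyond correctly identifying the bipartite $4$-circuit structure; once the two straight $2$-paths are in hand the contradiction is immediate.
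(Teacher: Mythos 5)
Your proof is correct and follows essentially the same route as the paper: both arguments extract from the $4$-circuit two distinct straight $2$-paths $(p,\ell,p')$ and $(p,\ell',p')$ through distinct lines (straightness coming from greenness, i.e.\ triviality of $\equiv_\ell$ and $\equiv_{\ell'}$) and then derive a contradiction with the uniqueness clause (VP2). The paper phrases this as a closed $4$-path $(a,x,b,y,a)$ with $a\in P$, but the substance is identical.
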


\begin{proof}
Let $(a,x,b,y,c)$ be a $4$-path with $c=a\in P$. Since $\Gamma$ is green,
$(a,x,b)$ and $(a,y,b)$ are distinct straight paths from $a$ to $b$. 
By (VP2), this is not allowed.
\end{proof}

\begin{notation}\label{fla3x}
Let $x,y$ be vertices such that ${\rm dist}(x,y)=2$. By \ref{fla3},
there is a unique vertex in $\Gamma_x\cap\Gamma_y$. We denote this vertex by $x\wedge y$.
\end{notation}

\begin{proposition}\label{fla40}
A $6$-circuit does not contain any straight $2$-paths
$(x,a,y)$ such that $a\in P$.
\end{proposition}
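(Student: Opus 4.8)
The plan is to argue by contradiction. Suppose some $6$-circuit contains a straight $2$-path centered at a point. Since a $6$-circuit is $2$-regular as a subgraph, each of its points is the center of exactly one $2$-path in the circuit, namely the one formed by its two circuit-edges; so I may label the circuit as $(u_0,u_1,u_2,u_3,u_4,u_5,u_0)$ in such a way that the offending straight $2$-path is $(u_5,u_0,u_1)$ with $u_0\in P$. Then $u_2,u_4\in P$ and $u_1,u_3,u_5\in L$.

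The first step is to use greenness to upgrade this single straight $2$-path into a root. Because $\Gamma$ is green (see \ref{toy10}), the local opposition relations at the lines $u_1$ and $u_5$ are trivial, so the $2$-paths $(u_4,u_5,u_0)$ and $(u_0,u_1,u_2)$ are automatically straight. Concatenating these with the assumed straight $2$-path $(u_5,u_0,u_1)$ shows that $(u_4,u_5,u_0,u_1,u_2)$ is a straight $4$-path, i.e.\ a root; the distinctness conditions $x_{i-2}\ne x_i$ needed for it to be a genuine path follow at once from the distinctness of the circuit vertices. Hence the two points $u_2$ and $u_4$ are opposite in the sense of \ref{toy7}.

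The resulting tension is that $u_2$ and $u_4$ are opposite yet share the common neighbor $u_3$. To exploit this I would apply \ref{toy8} to the opposite pair $u_2,u_4$ and the vertex $u_3\in\Gamma_{u_4}$, obtaining a root $(u_2,r_1,r_2,u_3,u_4)$ from $u_2$ to $u_4$ passing through $u_3$ with $u_3$ adjacent to $u_4$. Since $u_3$ is also a neighbor of $u_2$, the four vertices $u_2,r_1,r_2,u_3$ are pairwise distinct, so $(u_2,r_1,r_2,u_3,u_2)$ is a closed $4$-path and its $4$-circuit contradicts \ref{fla3}. (Equivalently, the sub-path $(u_2,r_1,r_2,u_3)$ is a straight $3$-path from $u_2$ to $u_3$ while the edge $(u_2,u_3)$ is a straight $1$-path between the same vertices, and these two distinct straight paths of length at most $3$ violate (VP2).)

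I expect the one genuinely substantive move to be the first: recognizing that in a green quadrangle a straight $2$-path at a point is never isolated, but is forced by the trivial relations at the two adjacent lines to extend to a full root, thereby making the far points $u_2$ and $u_4$ opposite even though they lie at distance~$2$. Once this opposition is in hand, the contradiction is purely a matter of general position and follows mechanically from \ref{toy8} together with \ref{fla3} (or (VP2)); no computation is required.
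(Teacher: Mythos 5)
Your proof is correct and follows essentially the same route as the paper's: greenness upgrades the straight $2$-path at a point to a root between the two far points of the circuit, and then \ref{toy8} applied to the remaining line of the circuit produces a second straight path (equivalently a $4$-circuit) that contradicts (VP2) (equivalently \ref{fla3}). The only cosmetic difference is that the paper phrases the final contradiction as two straight paths from a point to an adjacent line rather than as a $4$-circuit.
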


\begin{proof}
Let $(a,x,b,y,c,z,a)$ be a closed $6$-path with $a\in P$. It suffices to show that
$(x,b,y)$ is not straight. Suppose that it is. 
Then $(a,x,b,y,c)$ is a root, so by \ref{toy7}, $a$ and $c$ are opposite. By \ref{toy8},
there is a root $(a,u,d,z,c)$ from $a$ to $c$ that contains $z$. 
Thus $(a,z)$ and $(a,u,d,z)$ are both straight paths from $a$ to $z$. By (VP2),
this is not allowed. 
\end{proof}

\begin{proposition}\label{toy20}
If $a,b\in P$ are opposite, then ${\rm dist}(a,b)=4$.
\end{proposition}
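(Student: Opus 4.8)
The plan is to use bipartiteness to cut the possible distances down to $\{2,4\}$, and then to rule out the value $2$ by producing two straight paths of different lengths between the same pair of vertices, contradicting (VP2). So first I would record the easy bounds. Since $a$ and $b$ are opposite, by \ref{toy7} there is a root from $a$ to $b$, i.e.\ a straight $4$-path, and in particular ${\rm dist}(a,b)\le4$. Moreover $a\ne b$, since by (VP2) no vertex is opposite itself. Because $\Gamma$ is bipartite by (VP1) and $a,b\in P$ lie in the same part, ${\rm dist}(a,b)$ is even. Hence ${\rm dist}(a,b)\in\{2,4\}$, and it suffices to exclude the value $2$.

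Next I would assume for contradiction that ${\rm dist}(a,b)=2$. Any common neighbor of $a$ and $b$ lies in $L$, since neighbors of points are lines, and by \ref{fla3} there is exactly one such vertex, namely $x=a\wedge b$; in particular $\{a,x\}$ and $\{x,b\}$ are edges. I would then apply \ref{toy8} to the opposite pair $a,b$ together with $x\in\Gamma_b$. This produces a root from $a$ to $b$ whose vertex adjacent to $b$ is $x$; since a root has length $4$, it has exactly two interior vertices preceding $x$, so it has the shape $(a,w_1,w_2,x,b)$.

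The contradiction then comes from comparing straight paths from $a$ to $x$. The prefix $(a,w_1,w_2,x)$ of this root is a straight $3$-path, whereas the single edge $(a,x)$ is a $1$-path, which is straight vacuously since it contains no subpath of length $2$. These are two distinct straight paths from $a$ to $x$, both of length at most $3=n-1$, which contradicts the uniqueness asserted by (VP2) for $k=3$. Therefore ${\rm dist}(a,b)\ne2$, and so ${\rm dist}(a,b)=4$.

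The only point I expect to require care is the bookkeeping in the middle step: confirming that the root supplied by \ref{toy8} really has $x$ as its penultimate vertex (so that $(a,w_1,w_2,x)$ is a genuine proper prefix of length $3$), and that the edge $(a,x)$ exists and is visibly distinct from that prefix. Once these are pinned down, the rest is a direct invocation of (VP2).
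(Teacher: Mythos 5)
Your proof is correct and follows essentially the same route as the paper: the paper rules out ${\rm dist}(a,b)=2$ by citing \ref{fla40}, whose own proof is precisely your argument (use \ref{toy8} to produce a root whose penultimate vertex is the common neighbor, then contradict (VP2) with the length-one straight path $(a,x)$). The only difference is that you inline that argument rather than quoting \ref{fla40}.
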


\begin{proof}
Suppose that $a,b\in P$ are opposite. 
By \ref{toy7}, there is a root from $a$ to $b$. In particular,
${\rm dist}(a,b)\le4$. By \ref{fla3}, $a\ne b$ and then by \ref{fla40}, ${\rm dist}(a,b)\ne2$.
Hence ${\rm dist}(a,b)=4$.
\end{proof}

\begin{proposition}\label{toy21}
${\rm diam}(\Gamma)=4$.
\end{proposition}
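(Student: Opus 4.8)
The plan is to establish the two inequalities ${\rm diam}(\Gamma)\ge4$ and ${\rm diam}(\Gamma)\le4$ separately. For the lower bound I would first produce a pair of opposite points: by \ref{toy5x} I can extend the trivial path at any point to a root (a straight $4$-path), and since a $4$-path beginning at a point also ends at a point, its two endpoints are opposite points. By \ref{toy20} they are at distance~$4$, so ${\rm diam}(\Gamma)\ge4$.

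For the upper bound I would argue by contradiction, choosing a counterexample at minimal distance. If ${\rm diam}(\Gamma)>4$, pick $u,v$ at minimal distance $d\ge5$. Along a geodesic $u=z_0,\ldots,z_d=v$ the pair $z_0,z_{d-1}$ has distance $d-1$, so minimality forces $d-1\le4$, i.e. $d=5$. As $5$ is odd, $u$ and $v$ lie in different parts; say $u$ is a point and $v$ a line, and let $p\in\Gamma_v$ be a point of $v$ with ${\rm dist}(u,p)=4$.

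The heart of the argument is to manufacture a \emph{straight} $5$-path from $u$ to $v$ and then close it up with (VP3). Applying \ref{toy9} to the points $u,p$ (which are at even distance) gives a point $w$ opposite both, and by \ref{toy20} we have ${\rm dist}(u,w)={\rm dist}(p,w)=4$. Since $p$ and $w$ are opposite and $v\in\Gamma_p$, \ref{toy8} yields a root $(w,m,q,v,p)$ through $v$; here $m\in\Gamma_w$ and $q\in\Gamma_v$, and straightness at the point $q$ says $m$ and $v$ are opposite at $q$. Since $u$ and $w$ are opposite and $m\in\Gamma_w$, a second application of \ref{toy8} gives a root $(u,e_1,e_2,m,w)$ through $m$, with straightness at the point $e_2$ saying $e_1$ and $m$ are opposite at $e_2$. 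Because ${\rm dist}(u,e_2)=2$ while every point of $v$ is at distance $\ge4$ from $u$, the points $e_2$ and $q$ are distinct, so $(u,e_1,e_2,m,q,v)$ is a genuine $5$-path. It is straight: at the two lines $e_1,m$ straightness is automatic since $\Gamma$ is green, and at the two points $e_2,q$ it was arranged by the two applications of \ref{toy8}. Now (VP3) places this straight $5$-path in a straight $8$-circuit, whose complementary arc is a path of length $8-5=3$ from $u$ to $v$; hence ${\rm dist}(u,v)\le3$, contradicting ${\rm dist}(u,v)=5$.

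The main obstacle, and the reason a naive estimate fails, is upgrading distance-$4$ data into an honest straight path: distance gives no control over opposition at \emph{points}, where the local relations are nontrivial, and splicing the two roots at $u$ and $p$ directly tends to produce only a path of length $6$ or $7$. The device that overcomes this is to glue the two roots along a common line $m$ through the common opposite $w$, so that the join is the line $m$ (where straightness is free) rather than a point. The only place distances re-enter is the verification that $e_2\ne q$, which guarantees a bona fide $5$-path and is immediate; once the straight $5$-path is in hand, (VP3) does the rest, and together with the lower bound this gives ${\rm diam}(\Gamma)=4$.
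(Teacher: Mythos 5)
Your proof is correct and uses essentially the same argument as the paper: find a common opposite point via \ref{toy9}, pull back two roots through it via \ref{toy8} sharing a line at that common opposite, splice them into a straight $5$-path (greenness making the junction straight), and close up with (VP3). The only differences are cosmetic --- the paper directly bounds ${\rm dist}(a,u)$ for every point--line pair and handles the degenerate case where the two roots merge by exhibiting a $3$-path, whereas you reduce to distance exactly~$5$ by a minimal-counterexample argument and use that hypothesis to rule out the degeneracy.
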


\begin{proof}
By \ref{toy20}, it suffices to show that ${\rm dist}(a,u)\le3$ for all $a\in P$ and $u\in L$.
Let $a\in P$ and $u\in L$.  Choose $b\in\Gamma_u$. By \ref{toy9}, we can
choose $c\in a^{\rm op}\cap b^{\rm op}$. By \ref{toy8}, there 
is a root $(b,x_1,x_2,x_3,c)$ from $b$ to $c$ such that $x_1=u$ and 
a root $(a,y_1,y_2,y_3,c)$ from $a$ to $c$ such that
$y_3=x_3$. If $y_2=x_2$, then $(a,y_1,x_2,x_1)$ is a $3$-path from $a$ to $u$.
Suppose that $y_2\ne x_2$. Then $(a,y_1,y_2,x_3,x_2,x_1)$ is a 
$5$-path from $a$ to $u$. Since $\Gamma$ is green, this path is straight.
By (VP3), therefore, ${\rm dist}(a,u)\le3$ also in this case.
\end{proof}

\begin{proposition}\label{toy22}
Let $a,b\in P$ and suppose that ${\rm dist}(a,b)=4$ but $a$ and $b$ are not opposite.
Then $a^{\rm op}=b^{\rm op}$ and $\Gamma_2(a)=\Gamma_2(b)$.
\end{proposition}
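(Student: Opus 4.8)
The plan is to split the assertion into two one-sided inclusions and to isolate the combinatorial content of the hypothesis. Since everything in sight is symmetric in $a$ and $b$, and since opposite points lie in $P$ at distance $4$ (\ref{toy20}) while ${\rm diam}(\Gamma)=4$ (\ref{toy21}), it suffices to prove $\Gamma_2(a)\subseteq\Gamma_2(b)$ and $a^{\mathrm{op}}\subseteq b^{\mathrm{op}}$. First I would fix a geodesic $(a,x,d,y,b)$. Because $a$ and $b$ are not opposite this $4$-path is not a root, and as $\Gamma$ is green the only vertex at which straightness can fail is the point $d$; hence $x\not\equiv_d y$, and likewise $(a\wedge d')\not\equiv_{d'}(d'\wedge b)$ for \emph{every} common neighbour $d'$ of $a$ and $b$. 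This is the single consequence of non-oppositeness that the argument must exploit.

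The key tool I would establish first is a \emph{sliding lemma}: if $u$ is opposite $v$, ${\rm dist}(v,w)=2$ and ${\rm dist}(u,w)=4$, then $u$ is opposite $w$. Indeed, put $m=v\wedge w$ (well defined by \ref{fla3x}); by \ref{toy8}, applied to the opposite pair $(u,v)$ and to $m\in\Gamma_v$, there is a root $(u,n,g,m,v)$ whose $v$-neighbour is $m$, where $g\in\Gamma_m$ satisfies ${\rm dist}(u,g)=2$. Since ${\rm dist}(u,w)=4$ we get $g\ne w$, so greenness makes $(g,m,w)$ straight and $(u,n,g,m,w)$ a root; thus $u$ is opposite $w$. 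By symmetry of the opposition relation the same statement slides the near endpoint as well.

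Feeding the contrapositive of this lemma back in gives the two emptiness facts $\Gamma_2(a)\cap b^{\mathrm{op}}=\emptyset$ and $\Gamma_2(b)\cap a^{\mathrm{op}}=\emptyset$: no neighbour of one point is opposite the other. For $a^{\mathrm{op}}\subseteq b^{\mathrm{op}}$ I would take $q\in a^{\mathrm{op}}$ and slide the opposition from $a$ to $b$ across $d$: if ${\rm dist}(q,d)=4$, then the lemma with $(v,w)=(a,d)$ gives $q\in d^{\mathrm{op}}$, and a second application with $(v,w)=(d,b)$ gives $q\in b^{\mathrm{op}}$ (the values ${\rm dist}(q,b)\in\{0,2\}$ being excluded by the emptiness facts). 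The same sliding, combined with the lemma's exclusion of the opposite case, reduces $\Gamma_2(a)\subseteq\Gamma_2(b)$ to ruling out ${\rm dist}(c,b)=4$ (necessarily non-opposite) for $c\in\Gamma_2(a)$. In both reductions a single residual possibility survives: that some intermediate point lands at distance $2$ from $q$, respectively that $c$ and $b$ are themselves at distance $4$ and not opposite.

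I expect this residual configuration to be the main obstacle. Unwinding it through \ref{toy8} leads to a purely local assertion at the midpoint: writing $\mu=d\wedge q$, one knows $x\equiv_d\mu$ and $x\not\equiv_d y$ and must deduce $\mu\equiv_d y$, i.e. that ``non-opposition at $d$'' is a parallelism (transitive). Establishing this transitivity is the crux, and the difficulty is genuine because the relevant feet on a line need not a priori be unique. My intended route is to run two geodesics through $d$ into \ref{toy35} and to use the circuit restrictions \ref{fla3} and \ref{fla40} to force the relevant foot to equal $d$, thereby producing the root that witnesses $q\in b^{\mathrm{op}}$ (respectively ${\rm dist}(c,b)=2$). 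The careful bookkeeping guaranteeing that the constructed roots actually pass through $d$ is where the real work lies.
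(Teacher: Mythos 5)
Your sliding lemma is correct and cleanly isolates the content of the paper's opening step: it is precisely the observation behind the paper's \eqref{toy22a} (that $\Gamma_2(b)\cap a^{\rm op}=\emptyset$), and your deduction of the two emptiness facts and of $q\in b^{\rm op}$ in the case ${\rm dist}(q,d)=4$ is sound. The gap is that the ``residual configuration'' you defer is not a corner case to be mopped up --- it is the entire content of the proposition, and your proposal stops exactly where the real proof has to begin. The local statement you reduce to (for lines $x,\mu,y\in\Gamma_d$ with $x\equiv_d\mu$ and $x\not\equiv_d y$, conclude $\mu\equiv_d y$) is essentially \ref{fla11}, which the paper proves only in the following section \emph{using} \ref{toy22} via \ref{fla5x}(ii); so it cannot be borrowed, and you do not prove it. Nor is it provable by purely local bookkeeping at $d$: the paper's mechanism is global. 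It invokes \ref{toy9} to produce a point $c$ opposite \emph{both} $a$ and $b$, uses $2$-plumpness to choose a line $z\in\Gamma_c$ opposite the relevant foot at $c$, and then plays several roots through $c$ against one another via \ref{toy8}, \ref{toy35} and \ref{fla40} --- this is the long middle portion of the paper's proof, establishing $\Gamma_x\backslash\{b\}\subset\Gamma_2(a)$ for every $x\in\Gamma_b$. Your proposed route (``run two geodesics through $d$ into \ref{toy35} and use \ref{fla3} and \ref{fla40}'') never introduces an auxiliary common opposite, and I do not see how it closes without one; the part you can in fact force locally (that the root from $a$ to $q$ through $\mu$ passes through $d$, giving $x\equiv_d\mu$) is the easy half of the step.

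There is also a structural problem with your reduction of $\Gamma_2(a)\subseteq\Gamma_2(b)$: the surviving case there is a pair $(c,b)$ with ${\rm dist}(c,b)=4$ and $c\notin b^{\rm op}$, i.e.\ a pair satisfying the very hypotheses of the proposition, which is not by itself a contradiction and is not visibly equivalent to the transitivity statement at $d$. Note that the paper's logical order is the reverse of yours: it first proves $\Gamma_2(a)=\Gamma_2(b)$ directly (the hard construction just described), and only then deduces $a^{\rm op}=b^{\rm op}$, by observing that a putative $c\in a^{\rm op}\backslash b^{\rm op}$ would satisfy $\Gamma_2(c)=\Gamma_2(b)=\Gamma_2(a)$ while $\Gamma_2(c)\cap a^{\rm op}\ne\emptyset$, forcing $\Gamma_2(a)\cap a^{\rm op}\ne\emptyset$ against \ref{toy20}. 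If you want to salvage your plan, prove the $\Gamma_2$ equality first by some version of the paper's construction and then run your sliding argument on top of it; as written, the proposal is a correct frame around an unproven core.
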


\begin{proof}
Suppose there exists $c\in \Gamma_2(b)\cap a^{\rm op}$ and let $x=b\wedge c$.
By \ref{toy8}, there exists a root $(a,y_1,y_2,y_3,c)$ from $a$ to $c$ 
with $y_3=x$. Since $\Gamma$ is green, it follows that  $(a,y_1,y_2,y_3,b)$ is a root.
Since $a\not\in b^{\rm op}$ by hypothesis, we obtain a contradition. Therefore
\begin{equation}\label{toy22a}
\Gamma_2(b)\cap a^{\rm op}=\emptyset.
\end{equation}

Now choose $x\in\Gamma_b$. We claim that 
\begin{equation}\label{toy22b}
\Gamma_x\backslash\{b\}\subset \Gamma_2(a).
\end{equation}
By \ref{toy9}, we can choose $c\in a^{\rm op}\cap b^{\rm op}$.
Let $(b,x_1,x_2,x_3,c)$ be the unique 
root from $b$ to $c$ such that $x_1=x$ and let $(c,y_1,y_2,y_3,a)$ be the 
unique root from $c$ to $a$ such that $y_1=x_3$. 
By \eqref{toy22a}, $(a,y_3,y_2,y_1,x_2)$ is not a root. Since $\Gamma$ is green,
it follows that $x_2=y_2$.

Choose $z\in\Gamma_c$ opposite $x_3$ at $c$. Let $(c,z_1,z_2,z_3,b)$ and 
$(c,z_1',z_2',z_3',a)$ be the unique roots from $c$ to $b$ and from $c$ to $a$
such that $z_1=z_1'=z$. The closed $8$-path $(b,z_3,z_2,z,c,x_3,x_2,x,b)$ 
is straight. By \ref{toy35}, $(x,b,z_3)$ is also straight. 

If $z_2\ne z_2'$, then $(a,z_3',z_2',z_1',z_2)$
is a root and hence $z_2\in \Gamma_2(b)\cap a^{\rm op}$. By \eqref{toy22a}, it 
follows that $z_2=z_2'$. 

Let $d\in\Gamma_x\backslash\{x_2,b\}$. To prove \eqref{toy22b}, we need to 
show that $d\in \Gamma_2(a)$. The path $(d,x,x_2,x_3,c)$ is a root.
Let $(c,w_1,w_2,w_3,d)$ be the unique root from $c$ to $d$ such that $w_1=z$.
By \ref{fla40}, we have $w_2\ne z_2$ since otherwise the straight $2$-path $(x,b,z_3)$ is 
contained in the closed $6$-path $(x,b,z_3,z_2,w_3,d,x)$. Therefore
$(a,z_3',z_2,z,w_2)$ is a root. Hence $w_2$ and $a$ are opposite.
Let $(w_2,v_1,v_2,v_3,a)$ be the unique root from $w_2$ to $a$ such that $v_1=w_3$.
If $v_2\ne d$, then $(d,w_3,v_2,v_3,a)$ is a root, but this implies
that $d\in \Gamma_2(b)\cap a^{\rm op}$. By \eqref{toy22a}, we must have
$v_2=d$. Therefore $d\in \Gamma_2(a)$. 
Thus \eqref{toy22b} holds as claimed.

By \ref{toy21}, \eqref{toy22a} and 
symmetry, we have $\Gamma_2(a)=\Gamma_2(b)$. Thus
\begin{equation}\label{toy22c}
\Gamma_2(e)=\Gamma_2(f)
\end{equation} 
for all $e,f\in P$ such that ${\rm dist}(e,f)=4$ and $e\not\in f^{\rm op}$.
It remains to show that
$a^{\rm op}=b^{\rm op}$. Suppose that $c\in a^{\rm op}\backslash b^{\rm op}$ and 
let $(a,x_1,x_2,x_3,c)$ be a root from $a$ to $c$. Thus
$z\in \Gamma_2(c)\cap a^{\rm op}$ for all $z\in\Gamma_{x_3}\backslash\{x_2,c\}$.
Therefore 
\begin{equation}\label{toy33d}
\Gamma_2(c)\cap a^{\rm op}\ne\emptyset.
\end{equation}
Since $a$ and $b$ are not opposite, we have $c\ne b$. By \eqref{toy22a},
${\rm dist}(b,c)\ne2$. Thus ${\rm dist}(b,c)=4$ by \ref{toy21}.
Hence $\Gamma_2(a)=\Gamma_2(b)=\Gamma_2(c)$ by \eqref{toy22c}.
It follows that $\Gamma_2(a)\cap a^{\rm op}\ne\emptyset$ by \eqref{toy33d}. By \ref{toy20},
this is impossible. With this contradiction, we conclude
that $a^{\rm op}=b^{\rm op}$.
\end{proof}

\begin{corollary}\label{toy25}
Suppose that $\Gamma$ is flat as 
defined in {\rm\ref{toy7}} and let $a,b\in P$. Then $a$ and $b$ are opposite
if and only if ${\rm dist}(a,b)=4$.
\end{corollary}

\begin{proof}
This holds by \ref{toy7}, \ref{toy20} and \ref{toy22}.
\end{proof}

\section{Flat quotients}\label{quo}

Our goal in this section is to show that every green Veldkamp quadrangle has a 
canonical flat quotient. The precise result is formulated in \ref{fla14} and \ref{fla88}.

For the rest of this section, we assume that $\Gamma$ is a green Veldkamp quadrangle
and continue with the notation in the previous section.

\begin{notation}\label{fla1}
A {\it weed} is a non-straight $4$-path $(a,x,b,y,c)$ such that 
$a,b,c\in P$ and ${\rm dist}(a,c)=4$.
\end{notation}

\begin{remark}\label{fla50}
Note that a root cannot also be a weed since roots are straight
and weeds are not.
\end{remark}

\begin{remark}\label{fla1x}
Let $\alpha=(a,x,b,y,c)$ be a weed. The point $b$ is called the {\it middle point} of 
$\alpha$. By \ref{fla3}, the middle point of a weed $\alpha$ is uniquely determined by the two
lines contained in $\alpha$. 
\end{remark}

\begin{notation}\label{fla5}
Let $a\simeq b$ for $a,b\in P$ if either $a=b$ or there is a weed that begins at $a$ and ends at $b$.
\end{notation}

\begin{proposition}\label{fla10}
Suppose that $u\in v^{\rm op}$ for some $u,v\in P\cup L$ and $a\simeq b$
for some $a,b\in P$ such that $a\ne b$. Then the following hold:
\begin{enumerate}[\rm(i)]
\item Every $4$-path from $u$ to $v$ is a root and for every $w\in\Gamma_u$,
there is a unique $4$-path from $u$ to $v$ that contains $w$.
\item For every $x\in\Gamma_a$, there exists a $4$-path from $a$ to $b$ containing $x$,
every $4$-path from $a$ to $b$ is a weed and $a\not\in b^{\rm op}$.
\end{enumerate}
\end{proposition}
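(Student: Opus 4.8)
The plan is to isolate the assertion $a\not\in b^{\rm op}$ as the crux; once it is established, both remaining parts of~(ii) fall out of \ref{toy22}, and a parallel circuit argument settles~(i). Throughout I use that, since $\Gamma$ is green, a $2$-path fails to be straight only when its centre is a \emph{point}, so a $4$-path between two points is non-straight exactly at its middle point, and a weed is precisely a $4$-path $(a,x_0,m_0,y_0,b)$ with $x_0\not\equiv_{m_0}y_0$ and ${\rm dist}(a,b)=4$.

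First I would prove the key claim: if $(a,x_0,m_0,y_0,b)$ is a weed, then $a\not\in b^{\rm op}$. Suppose not. Since $a$ and $b$ are opposite, \ref{toy8} applied to the neighbour $y_0\in\Gamma_b$ yields a root $\rho=(a,\bar x,m',y_0,b)$. I would first rule out the two degenerate coincidences. If $m'=m_0$, then $\bar x\equiv_{m_0}y_0$ forces $\bar x\ne x_0$, but $\bar x,x_0\in\Gamma_a\cap\Gamma_{m_0}$ with ${\rm dist}(a,m_0)=2$, so $\bar x=x_0$ by \ref{fla3x}, a contradiction; hence $m'\ne m_0$. If next $\bar x=x_0$, the vertices $x_0,m',y_0,m_0$ span a $4$-circuit, excluded by \ref{fla3}; hence $\bar x\ne x_0$. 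The remaining inequalities among $a,x_0,m_0,y_0,m',\bar x$ follow because any further coincidence would either create a forbidden repeated common neighbour (via \ref{fla3x}) or collapse $a$ and $b$ to distance~$2$, so $C=(a,x_0,m_0,y_0,m',\bar x,a)$ is a genuine $6$-circuit. Because $\rho$ is straight, the $2$-path $(y_0,m',\bar x)$ is straight and centred at the \emph{point} $m'$; as it lies in $C$, this contradicts \ref{fla40}. The delicate point of the whole proposition is exactly here: one must extract from the assumed root a straight $2$-path centred at a point rather than a line, and then verify that all six vertices of $C$ are distinct so that \ref{fla40} genuinely applies.

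With the crux in hand the rest of~(ii) is immediate. Since ${\rm dist}(a,b)=4$ and $a\not\in b^{\rm op}$, \ref{toy22} gives $a^{\rm op}=b^{\rm op}$ and $\Gamma_2(a)=\Gamma_2(b)$. For the first assertion, given $x\in\Gamma_a$ I would choose a point $p\in\Gamma_x\setminus\{a\}$; then $p\in\Gamma_2(a)=\Gamma_2(b)$, so $b\wedge p$ exists and $(a,x,p,b\wedge p,b)$ is a $4$-path from $a$ to $b$ through $x$. For the second assertion, any $4$-path from $a$ to $b$ that were straight would be a root, making $a$ and $b$ opposite, contrary to the crux; hence every such $4$-path is non-straight, and since ${\rm dist}(a,b)=4$ it is a weed.

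Finally I would prove~(i) by the same technique in both parities. Suppose $u\in v^{\rm op}$ and that some $4$-path from $u$ to $v$, say $(u,p,\ell,q,v)$, is not a root. By \ref{toy8} there is a root $\rho'=(u,\bar p,\bar\ell,q,v)$ through the neighbour $q$ of $v$. As before, \ref{fla3x} forces $\bar\ell\ne\ell$ and then $\bar p\ne p$, while the path axiom already gives the inequalities involving $q$; thus $(u,p,\ell,q,\bar\ell,\bar p,u)$ is a genuine $6$-circuit. It contains a straight $2$-path of $\rho'$ that is centred at a point and contained in the circuit (the one through $\bar p$ when the middle of the path is a line, and through the middle vertex of $\rho'$ when it is a point), again contradicting \ref{fla40}. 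Hence every $4$-path from $u$ to $v$ is a root. The uniqueness statement then follows from \ref{toy8}: each $w\in\Gamma_u$ lies on a unique root from $u$ to $v$, and since opposite vertices are at distance~$4$ (\ref{toy20}), $w$ cannot be adjacent to $v$, so it occupies the position next to $u$ and this root is the unique $4$-path through $w$.
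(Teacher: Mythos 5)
Your proof is correct and rests on the same engine as the paper's: produce a root through a common neighbour via \ref{toy8}, force coincidences with \ref{fla3}/\ref{fla3x}, and then contradict \ref{fla40} with a straight $2$-path centred at a point inside a $6$-circuit. The only differences are organizational --- the paper proves (i) first and reads off $a\notin b^{\rm op}$ as a corollary, whereas you run the circuit argument twice (once for weeds, once for arbitrary $4$-paths between opposite vertices) and use \ref{toy22} rather than \ref{toy21} to produce the $4$-path through a given $x\in\Gamma_a$ --- none of which changes the substance.
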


\begin{proof}
Let $w\in\Gamma_u$. By \ref{toy20} and \ref{toy21}, we have ${\rm dist}(w,v)=3$. Hence
there exists a $4$-path
$\alpha=(u,p,q,w,v)$ from $u$ to $v$ that contains $w$.
By \ref{toy8}, there exists a root $\beta=(u,m,n,w,v)$ from
$u$ to $v$ that passes through $w$. By \ref{fla40}, $q=n$. By \ref{fla3}, therefore,
$\alpha=\beta$. Hence $\alpha$ is a root and $\alpha$ is the only $4$-path from 
$u$ to $v$ that contains $w$. Thus (i) holds.

Let $x\in\Gamma_a$.
Since $a\ne b$, there exists a weed $\alpha$ from $a$ to $b$.
Hence ${\rm dist}(a,b)=4$ and by \ref{toy21}, there exists
a $4$-path $\beta$ from $a$ to $b$ containing $x$. Applying (i), we observe
that since $\alpha$ is not
a root (by \ref{fla50}), neither is $\beta$ a root. Hence $\beta$ is a weed.
Thus (ii) holds.
\end{proof}

\begin{proposition}\label{fla5x}
Let $a,b\in P$. Then the following hold:
\begin{enumerate}[\rm(i)]
\item $a\simeq b$ if and only if $a^{\rm op}=b^{\rm op}$.
\item If $a\simeq b$, then $\Gamma_2(a)=\Gamma_2(b)$.
\end{enumerate}
\end{proposition}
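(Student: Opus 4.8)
The plan is to prove the two statements of \ref{fla5x} in a way that leverages the structural results already established, particularly \ref{toy22} and \ref{fla10}.

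First I would handle part~(i), which asserts that $a\simeq b$ if and only if $a^{\rm op}=b^{\rm op}$. For the forward direction, suppose $a\simeq b$ with $a\ne b$ (the case $a=b$ being trivial). By \ref{fla10}(ii), there is a weed from $a$ to $b$, so ${\rm dist}(a,b)=4$ and $a\not\in b^{\rm op}$. This places us exactly in the hypotheses of \ref{toy22}, which immediately yields $a^{\rm op}=b^{\rm op}$. For the converse, suppose $a^{\rm op}=b^{\rm op}$ with $a\ne b$; I must produce a weed from $a$ to $b$. The idea is to pick some $c\in a^{\rm op}=b^{\rm op}$ and use \ref{toy8} to find roots $(a,x_1,x_2,x_3,c)$ and $(b,x_1',x_2',x_3',c)$ meeting at a common line near $c$ (say arranging $x_3=x_3'$, so the roots share their last line before $c$). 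Reversing and splicing, I expect to extract a $4$-path from $a$ to $b$ that is \emph{not} straight — the non-straightness being forced by the fact that $a\not\in b^{\rm op}$ (since $a^{\rm op}=b^{\rm op}$ and no vertex is opposite itself, so $b\not\in b^{\rm op}=a^{\rm op}$ gives $a\not\in b^{\rm op}$). A straight $4$-path from $a$ to $b$ would be a root, making $a,b$ opposite, a contradiction; since ${\rm dist}(a,b)=4$, any such $4$-path is then a weed, giving $a\simeq b$.

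Part~(ii) should follow essentially for free: if $a\simeq b$ with $a\ne b$, then as above we are in the situation of \ref{toy22}, whose conclusion explicitly includes $\Gamma_2(a)=\Gamma_2(b)$. The case $a=b$ is immediate. So (ii) is really just a second harvest from the same application of \ref{toy22}.

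The main obstacle will be the converse direction of~(i): manufacturing an actual weed from the mere equality $a^{\rm op}=b^{\rm op}$. The delicate point is ensuring the $4$-path I build is genuinely non-straight rather than accidentally a root — I must argue that if it \emph{were} straight it would certify $a$ opposite $b$, contradicting $a\not\in b^{\rm op}$. I would first pin down ${\rm dist}(a,b)$: since $a\ne b$ and (by \ref{toy20}) $a$ not opposite $b$ forces the distance to be $2$ or $4$, I would rule out distance~$2$ by noting that points at distance~$2$ share a neighbor $a\wedge b$ and comparing opposite-sets via \ref{fla40} or a direct argument, leaving ${\rm dist}(a,b)=4$. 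Then existence of \emph{some} $4$-path is automatic, and \ref{fla10}(i) together with the failure of oppositeness guarantees at least one such path is non-straight, hence a weed. Verifying that distance~$2$ is genuinely incompatible with $a^{\rm op}=b^{\rm op}$ (using that $\Gamma$ is green and $2$-plump) is the step I would treat most carefully.
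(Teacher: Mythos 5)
Your proposal follows the paper's proof essentially verbatim: the forward direction and part~(ii) both come from one application of \ref{toy22} after \ref{fla10}(ii) supplies ${\rm dist}(a,b)=4$ and $a\notin b^{\rm op}$, and the converse reduces to ruling out ${\rm dist}(a,b)=2$ and then noting that any $4$-path from $a$ to $b$ must be a weed since a straight one would make $a$ and $b$ opposite. The one step you leave sketched --- the exclusion of distance~$2$ --- is carried out in the paper exactly with the ingredients you name: since $\Gamma$ is green, $(a,a\wedge b,b)$ is straight and extends by $2$-plumpness to a root whose last vertex lies in $a^{\rm op}$ but, by \ref{toy20}, not in $b^{\rm op}$ (so \ref{fla40} is not needed there).
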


\begin{proof}
It suffices to assume that $a\ne b$.
Suppose that $a\simeq b$. Then ${\rm dist}(a,b)=4$ but 
by \ref{fla10}(ii), $a$ and $b$ are not opposite.
By \ref{toy22}, therefore, $a^{\rm op}=b^{\rm op}$ and $\Gamma_2(a)=\Gamma_2(b)$.
Suppose, conversely, that $a^{\rm op}=b^{\rm op}$.
If ${\rm dist}(a,b)=2$, then (because $\Gamma$ is green) there exists a root $\alpha$ starting at $a$ 
and containing $b$, but then the last vertex of $\alpha$ is contained in 
$a^{\rm op}\backslash b^{\rm op}$. Hence ${\rm dist}(a,b)=4$ by \ref{toy21}. Let $\beta$
be a $4$-path from $a$ to $b$. Since
$a$ is not opposite itself, it is not opposite $b$. It follows that $\beta$ is a weed
and hence $a\simeq b$. 
\end{proof}

\begin{notation}\label{fla70}
By \ref{fla5x}(i), $\simeq$ is an equivalence relation. We denote the equivalence class
of a point $a$ by $[a]$.
\end{notation}

\begin{notation}\label{fla6}
Let $x\sim y$ for $x,y\in L$ if there is a weed containing both $x$ and $y$.
Thus if $x\sim y$, there exist $a,b,c\in P$ such that $(a,x,b,y,c)$ is a weed.
Let $\approx$ denote the transitive-reflexive closure of the relation $\sim$ on $L$.
For each $x\in L$, let $[x]$ denote the equivalence class containing $x$.
(Thus all the equivalence classes are singletons if there are no weeds.)
\end{notation}

\begin{proposition}\label{fla11}
Let $x,y,z\in\Gamma_a$ for some $a\in P$. Suppose that $x$ is opposite $y$ at $a$
and that $y\sim z$. Then $x$ is opposite $z$ at $a$.
\end{proposition}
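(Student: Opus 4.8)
The plan is to exploit the fact that, since $x,y,z\in\Gamma_a$ are all lines through the point $a$, any weed witnessing $y\sim z$ is forced to be centered at $a$, and then to transport oppositeness from the $x$--side to the $z$--side using \ref{fla10}(i).

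First I would pin down the weed. Since $y\sim z$, there is a weed containing both lines $y$ and $z$; its two lines are then $y$ and $z$, and its middle point is the common neighbor $y\wedge z$. As $a\in\Gamma_y\cap\Gamma_z$, uniqueness of the common neighbor (\ref{fla3x}) forces $y\wedge z=a$, so the weed has the form $(p,y,a,z,r)$ with $p\in\Gamma_y\backslash\{a\}$, $r\in\Gamma_z\backslash\{a\}$ and ${\rm dist}(p,r)=4$. Because $\Gamma$ is green, the subpaths $(p,y,a)$ and $(a,z,r)$ are automatically straight, so the non-straightness of the weed must occur at $a$; that is, $y\not\equiv_a z$. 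Combined with $x\equiv_a y$, this already yields $x\ne z$, which I will need in order to write down an honest $4$-path through $a$.

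Next I would produce a vertex opposite $p$ on the $x$--side. Using $x\equiv_a y$ and greenness, the path $(p,y,a,x)$ is straight, and extending it by any $c\in\Gamma_x\backslash\{a\}$ gives a root $(p,y,a,x,c)$; hence $c\in p^{\rm op}$ by \ref{toy7}. Since the weed gives $p\simeq r$ with $p\ne r$ (as ${\rm dist}(p,r)=4$), Proposition \ref{fla5x}(i) yields $p^{\rm op}=r^{\rm op}$, so $c\in r^{\rm op}$, i.e. $c$ and $r$ are opposite.

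Finally I would invoke \ref{fla10}(i). The existence of the weed guarantees a pair of distinct equivalent points, so both hypotheses of \ref{fla10} are met; applied to the opposite pair $c,r$ it says that every $4$-path from $c$ to $r$ is a root. Now $(c,x,a,z,r)$ is a genuine $4$-path from $c$ to $r$ (this is where $x\ne z$ is used), so it is a root and in particular straight; its middle subpath $(x,a,z)$ is therefore straight, which is exactly the assertion $x\equiv_a z$. The one point requiring care --- and the only real obstacle --- is the opening observation that the weed must be centered at $a$, since this is what lets the endpoints $p,r$ serve as anchors and makes the transfer $p^{\rm op}=r^{\rm op}\ni c$, followed by \ref{fla10}(i), close the argument.
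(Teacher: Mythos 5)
Your proof is correct, and every step checks out: the weed witnessing $y\sim z$ is indeed forced by \ref{fla3} and \ref{fla1x} to have middle point $a$; the path $(p,y,a,x,c)$ is indeed a root because $\Gamma$ is green and $x\equiv_a y$; and both hypotheses of \ref{fla10} are met, the opposite pair being $c,r$ and the distinct $\simeq$-equivalent pair being $p,r$. Your setup coincides with the paper's --- the paper likewise writes the weed as $(c,z,a,y,b)$, picks a third point $d\in\Gamma_x\backslash\{a\}$ and forms the root $(d,x,a,y,b)$ --- but the endgame differs. The paper argues by contradiction: it uses \ref{fla5x}(ii) to get $\Gamma_2(b)=\Gamma_2(c)$, deduces ${\rm dist}(c,d)=4$, and observes that if $(x,a,z)$ were not straight then $(c,z,a,x,d)$ would be a weed, forcing $b\simeq d$ and contradicting \ref{fla10}(ii) because $b$ and $d$ are opposite. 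You instead argue directly: \ref{fla5x}(i) gives $p^{\rm op}=r^{\rm op}$, so your third point is opposite $r$, and \ref{fla10}(i) then forces the $4$-path $(c,x,a,z,r)$ to be a root, whose straightness at $a$ is exactly the desired conclusion. Your version is slightly more economical (no contradiction, no appeal to $\Gamma_2$), at the small cost of needing the preliminary observation that $x\ne z$ so that $(c,x,a,z,r)$ is genuinely a $4$-path --- which you supply correctly from $y\not\equiv_a z$ and $x\equiv_a y$.
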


\begin{proof}
Since $y\sim z$, there exist $b,c\in P$ such that $(c,z,a,y,b)$ 
is a weed. By \ref{fla5} and \ref{fla5x}(ii), we have 
$b\simeq c$ and $\Gamma_2(b)=\Gamma_2(c)$.
Choose $d\in\Gamma_x\backslash\{a\}$. By \ref{fla3}, the points $b,c,d$ are pairwise
distinct. Since $x$
and $y$ are opposite at $a$, $(d,x,a,y,b)$ is a root and thus $b$ and $d$ are opposite.
By \ref{toy20}, therefore, $d\not\in \Gamma_2(b)$.
Hence $d\not\in\Gamma_2(c)$. By \ref{toy21}, we have ${\rm dist}(c,d)=4$.
Suppose now that $x$ and $z$ are not opposite at $a$. Then 
$(c,z,a,x,d)$ is a weed and hence $c\simeq d$. Since $b\simeq c$,
it follows that $b\simeq d$.
By \ref{fla10}(ii), this is impossible. With this contradiction, we conclude
that $x$ is opposite $z$ at $a$.
\end{proof}

\begin{proposition}\label{fla11x}
Let $x,y\in L$ and suppose that $x\approx y$. Then $x^{\rm op}=y^{\rm op}$.
\end{proposition}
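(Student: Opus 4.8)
The plan is to reduce to the case $x\sim y$ and then to transport a root through a point common to $x$ and $y$, using the local result \ref{fla11}. Since $\approx$ is by definition (\ref{fla6}) the transitive-reflexive closure of $\sim$, and equality of the sets $u^{\rm op}$ is transitive, it suffices to prove $x^{\rm op}=y^{\rm op}$ under the hypothesis $x\sim y$; the general statement then follows by chaining equalities along a $\sim$-sequence joining $x$ to $y$. So I would assume $x\sim y$. By \ref{fla6}, there is a weed $(a,x,b,y,c)$ containing both lines, so in particular $x,y\in\Gamma_b$.

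By the symmetry of $\sim$ it is enough to show $x^{\rm op}\subseteq y^{\rm op}$. I would take $z\in x^{\rm op}$, so that $x$ and $z$ are opposite, and invoke \ref{toy8} with $z$ and $x$ as the opposite pair and $b\in\Gamma_x$ as the prescribed neighbor of $x$; this yields a root from $x$ to $z$ whose second vertex is $b$, say $(x,b,w,q,z)$ with $w\in L$ and $q\in P$. Straightness of the subpath $(x,b,w)$ (centered at the point $b$) then says precisely that $x$ and $w$ are opposite at $b$.

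Now the three lines $w,x,y$ all lie in $\Gamma_b$, we have $w$ opposite $x$ at $b$, and $x\sim y$; so \ref{fla11} gives that $w$ is opposite $y$ at $b$. I would then check that $(y,b,w,q,z)$ is straight: the subpath $(y,b,w)$ is straight because $y$ is opposite $w$ at $b$, the subpath $(b,w,q)$ is automatically straight because $\Gamma$ is green (its center $w$ is a line), and $(w,q,z)$ is straight because it is a subpath of the root $(x,b,w,q,z)$. Hence $(y,b,w,q,z)$ is a root and $z\in y^{\rm op}$, which gives $x^{\rm op}\subseteq y^{\rm op}$; exchanging the roles of $x$ and $y$ finishes the argument.

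The step I expect to require the most care is arranging the root from $x$ to $z$ to run through the specific point $b$ at which $x$ and $y$ meet, because \ref{fla11} is purely local at one point and cannot be applied unless the line $w$ produced by $z$ is a neighbor of $b$. This is exactly what the ``prescribed neighbor'' form of \ref{toy8} delivers, so the obstacle is essentially bookkeeping; the repeated appeals to greenness to keep the length-$2$ subpaths centered at lines straight are routine.
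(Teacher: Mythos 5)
Your proposal is correct and follows essentially the same route as the paper: reduce to $x\sim y$ by transitivity, take $z\in x^{\rm op}$, produce the root from $x$ to $z$ passing through the common point $b=x\wedge y$ (the paper uses \ref{fla10}(i) where you use \ref{toy8}, but these deliver the same root), and then apply \ref{fla11} at $b$ to swap $x$ for $y$ in the initial segment. The only cosmetic difference is the choice of lemma used to route the root through $b$; the substance of the argument is identical.
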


\begin{proof}
It suffices to assume that $x\sim y$ and, by symmetry, to show that $x^{\rm op}\subset y^{\rm op}$.
Let $a=x\wedge y$ and choose
$z\in x^{\rm op}$. By \ref{fla10}(i), there exists a root $(x,a,u,c,z)$
from $x$ to $z$ that contains $a$. The lines $x$ and $u$ are opposite at $a$.
By \ref{fla11}, it follows that $u$ is opposite $y$ at $a$. Thus $(y,a,u,c,z)$
is also a root. Hence $z\in y^{\rm op}$. 
\end{proof}

We will need the following result in the proof of \ref{fla42}.

\begin{lemma}\label{fla41}
Let $a,b,c,e,f\in P$ and $s,t,u,v,w,x,y,z\in L$ and 
suppose that $x^{\rm op}=y^{\rm op}$, that
$(b,x,a,y,c,u,b)$ and $\xi:=(b,w,e,t,c,u,b)$ are both closed $6$-paths
and that the $4$-paths $(y,c,t,e,z)$, $(y,c,v,f,s)$ and $(x,b,w,e,z)$ are all roots.
Then ${\rm dist}(b,f)=2$.
\end{lemma}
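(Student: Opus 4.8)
The plan is to reduce the statement to the coincidence of two roots and then to force that coincidence using the two given $6$-circuits together with the hypothesis $x^{\rm op}=y^{\rm op}$. First I would read off the local geometry. Writing the three hypothesized roots as $R_1:=(y,c,t,e,z)$, $R_2:=(y,c,v,f,s)$ and $R_3:=(x,b,w,e,z)$, the fact that $b,c,e$ are the point-vertices of the closed $6$-paths gives, via \ref{fla3}, that $u=b\wedge c$, $w=b\wedge e$ and $t=c\wedge e$, so $b,c,e$ are pairwise at distance~$2$; from $R_2$ I also read off $v=c\wedge f$ and ${\rm dist}(c,f)=2$. Applying \ref{fla40} to the $6$-circuits underlying $(b,x,a,y,c,u,b)$ and $\xi=(b,w,e,t,c,u,b)$ records the non-straight corners $x\not\equiv_a y$, $x\not\equiv_b u$, $y\not\equiv_c u$, $t\not\equiv_c u$ and $w\not\equiv_e t$. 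Finally, since $R_2$ makes $y$ and $s$ opposite and $x^{\rm op}=y^{\rm op}$, the line $s$ lies in $x^{\rm op}$.

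Next I would isolate one clean claim. If $s=z$, then $R_1$ and $R_2$ are both roots from $y$ to $z$ through $c$, so they coincide by \ref{toy8}; hence $f=e$ and ${\rm dist}(b,f)={\rm dist}(b,e)=2$. So I may assume $s\ne z$. Since $s\in x^{\rm op}$ and $b\in\Gamma_x$, \ref{toy8} yields a unique root $\sigma=(x,b,g,p,s)$ from $x$ to $s$ through $b$. It then suffices to prove $p=f$: for then $g$ is adjacent to both $b$ and $p=f$, so $g=b\wedge f$ and ${\rm dist}(b,f)=2$, as required.

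For the crux I would argue by contradiction, assuming $p\ne f$. Because $\Gamma$ is green, $\equiv_s$ is trivial, so $(p,s,f)$ is straight and hence $(x,b,g,p,s,f,v,c,y)$ is a straight $8$-path; in particular its subpath $(b,g,p,s,f)$ is a root, so $b$ and $f$ are opposite and ${\rm dist}(b,f)=4$ by \ref{toy20}. Consequently the $4$-path $(b,u,c,v,f)$ cannot be a weed: a weed anywhere would make \ref{fla10}(i) available for the opposite pair $b,f$ and then force this very path to be a root, contradicting that it is non-straight. Hence $(b,u,c,v,f)$ is itself a root, which gives $u\equiv_c v$, and comparing the two roots $(b,u,c,v,f)$ and $(b,g,p,s,f)$ by \ref{toy35} (using $v\equiv_f s$ from $R_2$) yields $u\equiv_b g$. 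I would then import the $z$-side: the hypotheses say precisely that the root from $x$ through $b$ and the root from $y$ through $c$ both reach $z$ at the single point $e$, a coincidence certified by the circuit $\xi$ through $u$. Transporting this coincidence from $z$ to $s$ by means of $x^{\rm op}=y^{\rm op}$, the penult point $p$ on the $x$-side and the penult point $f$ on the $y$-side, together with $u=b\wedge c$, should close up into a $6$-circuit carrying a straight corner at one of $b$, $c$ or the putative meeting point, contradicting \ref{fla40}.

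The main obstacle is exactly this last transfer. The hypotheses hand us the coincidence of the two $z$-roots at $e$ for free (packaged as $\xi$), but converting the equality $x^{\rm op}=y^{\rm op}$ into the analogous coincidence for $s$ -- equivalently, showing that the unique root from $x$ to $s$ through $b$ and the root $R_2$ from $y$ to $s$ through $c$ share their penult point -- is the real content of the lemma. I expect this to require building the connecting circuit explicitly via the extension axiom (VP3) and then eliminating the competing configurations by repeated use of \ref{toy8}, \ref{toy35} and the no-straight-corner property \ref{fla40}, taking the relations $u\equiv_b g$ and $u\equiv_c v$ derived above as the point of departure.
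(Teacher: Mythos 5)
Your setup is sound and, up to a change of bookkeeping, parallels the first half of the paper's argument: you correctly reduce to ruling out ${\rm dist}(b,f)=4$, observe that in that case $b$ and $f$ must be opposite (the paper gets this from \ref{fla5x}(ii) rather than from your auxiliary root $\sigma$, but both work), conclude that $(b,u,c,v,f)$ is a root, and extract $u\equiv_c v$ and $u\equiv_b g$. However, the proof stops exactly where the real work begins, and you say so yourself: the ``transfer from $z$ to $s$'' that is supposed to produce a straight $2$-path centred at a point of a $6$-circuit is never carried out. ``Should close up into a $6$-circuit carrying a straight corner at one of $b$, $c$ or the putative meeting point'' is a conjecture about the shape of the contradiction, not a derivation of it, so as it stands the lemma is not proved.

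Concretely, what is missing is the following. One must first dispose of the possibility ${\rm dist}(e,f)=2$: a common neighbour $r$ of $e$ and $f$ would give a root $(b,w,e,r,f)$, hence a root $(x,b,w,e,r)$, hence $r\in x^{\rm op}=y^{\rm op}$, hence a root $(r,f,v,c,y)$; but $(r,f,v)$ then sits inside the closed $6$-path $(r,f,v,c,t,e,r)$, contradicting \ref{fla40}. Having forced ${\rm dist}(e,f)=4$, one takes the root from $b$ to $f$ through the line $w$ of $\xi$ --- not through the unconstrained line $g$ of your root $\sigma=(x,b,g,p,s)$, which has no a priori relation to $\xi$ --- say $(b,w,g',p',f)$ with $g'\ne e$. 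Then $(x,b,w,g',p')$ is a root, so $p'\in x^{\rm op}=y^{\rm op}$, so $(y,c,v,f,p')$ and hence $(g',p',f,v,c)$ are roots; thus $g'$ and $c$ are opposite, so $(g',w,e,t,c)$ is a root, making $(w,e,t)$ a straight $2$-path lying on $\xi$ --- the contradiction with \ref{fla40}. Both the case split on ${\rm dist}(e,f)$ and, crucially, the choice to route the root to $f$ through $w$ so that the contradiction lands on $\xi$ are absent from your proposal; the relations $u\equiv_b g$ and $u\equiv_c v$ you derived do not by themselves interact with $\xi$ (they only give $g\ne w$ and $v\ne t$) and are not what drives the paper's conclusion.
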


\begin{proof}
Note that $(a,y,c,v,f)$ is a root and hence $a$ and $f$ are opposite.
By \ref{toy20}, $a\not\in \Gamma_2(f)$
and hence $b\ne f$. By \ref{toy21}, therefore, it suffices to show that ${\rm dist}(b,f)\ne4$.
We suppose that ${\rm dist}(b,f)=4$. 
Since $a\in\Gamma_2(b)$, we have $\Gamma_2(b)\ne \Gamma_2(f)$. By \ref{fla5x}(ii),
it follows that $b$ and $f$ are opposite.
By \ref{fla10}(i), therefore, $\alpha:=(b,u,c,v,f)$ is a root.

Suppose now that $v=t$. Then $(u,c,v)$ lies on the root $\alpha$ and on the closed $6$-path
$\xi$. By \ref{fla40}, this is impossible. Hence $v\ne t$.
Hence by \ref{fla3}, we have $e\ne f$. Suppose that there exists $r\in\Gamma_e\cap\Gamma_f$.
By \ref{fla10}(i), $(b,w,e,r,f)$ is a root. Since $(x,b,w,e,z)$ is a root,
it follows that $(x,b,w,e,r)$ is also a root. Hence $r\in x^{\rm op}=y^{\rm op}$.
By \ref{fla10}(i), $\delta:=(r,f,v,c,y)$ is a root. 
By \ref{toy20}, $r\ne t$ and $r\ne v$. It follows that $\zeta:=(r,f,v,c,t,e,r)$ is a closed $6$-path.
The path $(r,f,v)$ is contained in both $\delta$ and $\zeta$. By \ref{fla40}
again, this is impossible. Hence the line $r$ does not exist and thus ${\rm dist}(e,f)=4$
by \ref{toy21}.

Since $b$ and $f$ are opposite, there exists a root $(b,w,g,p,f)$ from 
$b$ to $f$ that contains $w$. Since ${\rm dist}(e,f)=4$, we have $g\ne e$.
Since $(x,b,w,e,z)$ is a root, it follows that
$(x,b,w,g,p)$ is a root. Thus $p\in x^{\rm op}=y^{\rm op}$. By \ref{fla10}(i),
therefore, $(y,c,v,f,p)$ is a root. Hence
$(g,p,f,v,c)$ is a root. Thus $g$ and $c$ are opposite and therefore
the $\beta:=(g,w,e,t,c)$ is also a root, again by \ref{fla10}(i). 
The path $(w,e,t)$ thus lies on the root
$\beta$ and on the closed $6$-path $\xi$. By \ref{fla40},
this is impossible. With this last contradiction, we conclude that ${\rm dist}(b,f)\ne4$.
\end{proof}

\begin{proposition}\label{fla42}
Let $(x,a,y)$ be a path for some $a\in P$ and some $x,y\in L$ and suppose
that $x^{\rm op}=y^{\rm op}$. Then for each $c\in\Gamma_y\backslash\{a\}$,
there exists $b\in\Gamma_x\backslash\{a\}$ such that $(b,x,a,y,c)$ is a 
weed.
\end{proposition}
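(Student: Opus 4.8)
The plan is to first reduce the statement to a purely metric assertion. Since $(x,a,y)$ is a $2$-path we have $x\neq y$, and since $\Gamma$ contains no $4$-circuit (\ref{fla3}) the point $c\in\Gamma_y\setminus\{a\}$ cannot lie on $x$; as $(x,a,y,c)$ is a $3$-path this gives ${\rm dist}(x,c)=3$ and hence, using ${\rm diam}(\Gamma)=4$ (\ref{toy21}), ${\rm dist}(b,c)\in\{2,4\}$ for every $b\in\Gamma_x\setminus\{a\}$ (with $b\neq c$ by \ref{fla3}). For such a $b$ the $4$-path $(b,x,a,y,c)$ has its only interior point-vertex at $a$, so because $\Gamma$ is green its $2$-subpaths at the lines $x$ and $y$ are automatically straight; thus $(b,x,a,y,c)$ is a weed precisely when $x\not\equiv_a y$ and ${\rm dist}(b,c)=4$. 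So the proposition splits into showing (I) that $x\not\equiv_a y$, and (II) that some $b\in\Gamma_x\setminus\{a\}$ satisfies ${\rm dist}(b,c)=4$.

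For (I) I would argue by contradiction. Assume $x\equiv_a y$, pick any $b\in\Gamma_x\setminus\{a\}$, and use $2$-plumpness of $\equiv_b$ to choose a line $m\in\Gamma_b$ opposite $x$ at $b$. Then $(m,b,x,a,y)$ is straight — at $b$ by the choice of $m$, at the line $x$ because $\Gamma$ is green, and at $a$ by the assumption $x\equiv_a y$ — so it is a root and $m\in y^{\rm op}$. But $m$ and $x$ both pass through $b$, so ${\rm dist}(m,x)=2$ and therefore $m\notin x^{\rm op}$, contradicting $x^{\rm op}=y^{\rm op}$. Hence $x\not\equiv_a y$, and in particular every $4$-path $(b,x,a,y,c)$ with $b\in\Gamma_x\setminus\{a\}$ is non-straight.

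For (II) I would again argue by contradiction, assuming that no $b\in\Gamma_x\setminus\{a\}$ has ${\rm dist}(b,c)=4$; by the dichotomy above this means every point of $x$ is collinear with $c$ (the point $a$ being collinear with $c$ through $y$). Fixing any $b\in\Gamma_x\setminus\{a\}$ and letting $u$ be the line through $b$ and $c$ (\ref{fla3x}), the closed path $(b,x,a,y,c,u,b)$ is a $6$-circuit — the first of the two $6$-circuits appearing in \ref{fla41}. The goal is then to manufacture the remaining data of that lemma: a line $z\in x^{\rm op}=y^{\rm op}$, an auxiliary point $e$ collinear with both $b$ and $c$, the lines $w$ (through $b$ and $e$) and $t$ (through $c$ and $e$) completing the second $6$-circuit $(b,w,e,t,c,u,b)$, and further lines $v,s$ and a point $f$, chosen through \ref{toy8} and $2$-plumpness so that $(y,c,t,e,z)$, $(y,c,v,f,s)$ and $(x,b,w,e,z)$ are roots and, in addition, $b$ and $f$ are opposite. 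With all the hypotheses of \ref{fla41} in force, that lemma returns ${\rm dist}(b,f)=2$, whereas $b$ and $f$ opposite gives ${\rm dist}(b,f)=4$ by \ref{toy20} — the contradiction that establishes (II).

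The main obstacle is precisely the construction in (II). One must locate a single line $z$ opposite both $x$ and $y$ together with a point $e$ on $z$ whose $x$- and $y$-perspectives (in the sense of \ref{toy8}) are $b$ and $c$ respectively, and one must verify, using (VP2), \ref{toy8} and \ref{fla40}, that the various short paths produced are genuine roots and genuine $6$-circuits rather than degenerate ones. It is here, and not in part (I), that the full strength of $x^{\rm op}=y^{\rm op}$ — together with the standing assumption that $c$ is collinear with every point of $x$, which is what makes the auxiliary point $e$ available — is used; this is the reason \ref{fla41} is isolated beforehand as a self-contained combinatorial lemma. By contrast the reduction and part (I) are routine consequences of greenness, $2$-plumpness and the no-$4$-circuit property \ref{fla3}.
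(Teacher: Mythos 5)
Your reduction of the statement to ``$(x,a,y)$ is not straight'' plus ``some $b\in\Gamma_x\setminus\{a\}$ has ${\rm dist}(b,c)=4$'' is correct, and your part (I) is essentially the paper's first step (the paper extends a hypothetical root through $(x,a,y)$ and notes its endpoint lies in $x^{\rm op}\setminus y^{\rm op}$; your version with the line $m$ through $b$ is the same idea run in the other direction). The problems are in (II), where the two load-bearing steps are stated as goals rather than carried out, and the first of them fails as literally written. You fix an arbitrary $b\in\Gamma_x\setminus\{a\}$ and then ask for $z\in x^{\rm op}=y^{\rm op}$ and a point $e\in\Gamma_z$ whose $x$- and $y$-perspectives are $b$ and $c$. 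But for a given $z$ the point $e$ with $y$-perspective $c$ is forced --- it is the fourth vertex of the unique root from $y$ to $z$ through $c$ given by \ref{toy8} --- and its $x$-perspective is then a specific point of $\Gamma_x$ over which you have no control; indeed the paper proves (this is exactly its ``$b$ does not depend on the choice of $z$'' step) that only one point of $\Gamma_x\setminus\{a\}$ ever arises this way. So for most choices of $b$ the configuration you want to manufacture does not exist. The construction must run the other way: let $z$ and $c$ determine $e$, define $b$ as the second vertex of the root $(x,b,w,e,z)$, and check $b\ne a$ (as the paper does: $a$ is opposite $e$, so $a\notin\Gamma_2(e)$ by \ref{toy20}, while $b\in\Gamma_2(e)$).

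The second gap is the clause ``and, in addition, $b$ and $f$ are opposite'', which is the entire crux of your endgame and is asserted without any indication of how to achieve it. It can in fact be achieved: by $2$-plumpness at $c$ choose $v\in\Gamma_c$ opposite both $y$ and $u$ at $c$, take any $f\in\Gamma_v\setminus\{c\}$ and extend to a root $(y,c,v,f,s)$; then $(b,u,c,v,f)$ is straight (green at $u$ and $v$, opposite at $c$), so $b$ and $f$ are opposite and \ref{toy20} gives ${\rm dist}(b,f)=4$, contradicting the conclusion ${\rm dist}(b,f)=2$ of \ref{fla41}. With that supplied, your endgame is valid and genuinely different from the paper's --- and somewhat shorter: the paper never arranges $b$ and $f$ to be opposite, but instead applies \ref{fla41} to every root $(y,c,v,f,s)$ to conclude via \ref{fla10}(i) that $b$, and hence the line $u=b\wedge c$, is independent of the choice of $z$; it then re-chooses $z$ so that $t$ is opposite $u$ at $c$ and contradicts \ref{fla40} with the straight $2$-path $(t,c,u)$ inside the $6$-circuit $\xi$. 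So the architecture (reduction, the two $6$-circuits, \ref{fla41}) matches the paper, the final contradiction is an alternative that works once the missing construction of $f$ is written down, but as submitted the argument has a step that would fail (the prescription of $e$ for an arbitrary preassigned $b$) and a step that is missing (the production of $f$ opposite $b$).
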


\begin{proof}
If $(x,a,y)$ is straight, then there exists a root starting at $x$ and containing
$a$ and $y$, but then the last vertex on this root is in $x^{\rm op}$ but not in 
$y^{\rm op}$. Hence $(x,a,y)$ is not straight.

Choose $c\in\Gamma_y\backslash\{a\}$.
Let $z\in y^{\rm op}$ and let $(y,c,t,e,z)$ be the root from $y$ to $z$ that
contains $c$. Then $(a,y,c,t,e)$ is also a root, so $a$ is opposite $e$.
Hence by \ref{fla40}, $a\not\in\Gamma_2(e)$.
Since $x^{\rm op}=y^{\rm op}$, 
there exists a unique root $(x,b,w,e,z)$ from $x$ to $z$ that contains $e$.
Since $b\in\Gamma_2(e)$, we have $a\ne b$. Thus by \ref{fla3}, $b\ne c$. 
We want to show that $(b,x,a,y,c)$ is a weed. By \ref{toy21}, it suffices to 
show that ${\rm dist}(b,c)\ne2$. We assume, therefore, that there exists
$u\in\Gamma_b\cap\Gamma_c$. 

By \ref{fla3}, $u\ne x$ and $u\ne y$. Hence $(a,x,b,u,c,y,a)$ is a closed $6$-path.
By \ref{fla40}, therefore, $(x,b,u)$ is not straight. Since $(x,b,w)$ is straight,
we have $u\ne w$. Since $(b,w,e)$ is contained in a root, it is straight and hence
$b\ne e$. We already know that $b\ne c$. It follows by \ref{fla3}, $u\ne t$ and $w\ne t$. Thus
$\xi=(b,w,e,t,c,u,b)$ is a closed $6$-path. 

Now suppose that $(y,c,v,f,s)$ is an arbitrary root starting with $y$ and then $c$.
Thus $s\in y^{\rm op}=x^{\rm op}$.
By \ref{fla41}, ${\rm dist}(b,f)=2$. By \ref{fla10}(i), therefore, $(x,b,r,f,s)$ is the unique
root from $x$ to $s$ that contains $f$, where $r=b\wedge f$. We conclude that 
$b$ does not depend on the choice of $z$. Thus
$u$ is also independent of the choice of $z$. Since
$\Gamma$ is $2$-plump, we can assume that $z$ is chosen so that $t$ is opposite $u$ at $c$.
Thus the $2$-path $(t,c,u)$ is both straight and contained in the $6$-circuit $\xi$. 
By \ref{fla40}, this is impossible. With this contradiction, we conclude that ${\rm dist}(b,c)\ne2$.
\end{proof}

\begin{proposition}\label{fla43}
Let $x,y\in L$. Then $x^{\rm op}=y^{\rm op}$ if and only if $x\approx y$.
\end{proposition}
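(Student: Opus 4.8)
The forward implication is already in hand: if $x\approx y$ then $x^{\rm op}=y^{\rm op}$ by \ref{fla11x}. So the plan is to prove the converse, that $x^{\rm op}=y^{\rm op}$ forces $x\approx y$. I would assume $x\ne y$, so that $x$ and $y$ are distinct lines; since $\Gamma$ is bipartite of diameter~$4$ (\ref{toy21}), the distance ${\rm dist}(x,y)$ is either $2$ or $4$.

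The distance-$2$ case is essentially \ref{fla42}. If ${\rm dist}(x,y)=2$, set $a=x\wedge y$, so that $(x,a,y)$ is a path with $x^{\rm op}=y^{\rm op}$. Applying \ref{fla42} (with any $c\in\Gamma_y\backslash\{a\}$) produces a point $b$ such that $(b,x,a,y,c)$ is a weed. This weed contains both $x$ and $y$, so $x\sim y$ by \ref{fla6}, and hence $x\approx y$. Note that \ref{fla42} already does the delicate work here, including the verification that $(x,a,y)$ is not straight.

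The real content is the distance-$4$ case, and this is where I expect the main obstacle to lie. The goal is to reduce it to the distance-$2$ case by producing an intermediate line $w$ with $w^{\rm op}=x^{\rm op}$ and ${\rm dist}(x,w)={\rm dist}(w,y)=2$; once such a $w$ is found, the distance-$2$ case applied to the pairs $(x,w)$ and $(w,y)$ gives $x\sim w$ and $w\sim y$, whence $x\approx y$. To construct $w$ I would fix a line $z\in x^{\rm op}=y^{\rm op}$ (such a $z$ exists because straight paths extend, by \ref{toy5x}), choose a neighbour $b\in\Gamma_z$, and use \ref{fla10}(i) to form the unique roots from $z$ to $x$ and from $z$ to $y$ passing through $b$; the candidate line $w$ is then extracted from the lines through $b$ occurring in these roots.

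The obstacle is verifying that this $w$ simultaneously meets both $x$ and $y$ and satisfies $w^{\rm op}=x^{\rm op}$. Establishing $w^{\rm op}=x^{\rm op}$ cannot invoke \ref{fla11x} (that would be circular), so I would argue directly, transporting roots across the relevant common points using the local-opposition invariance of \ref{fla11} together with the forbidden-configuration lemmas \ref{fla3} (no $4$-circuits) and \ref{fla40} (no straight $2$-path at a point inside a $6$-circuit). The bookkeeping should parallel the arguments in \ref{toy22} and \ref{fla41}: assume the desired opposition fails, build a root or a straight path, and exhibit it inside a $6$-circuit or inside a pair of roots sharing a $2$-subpath, contradicting \ref{fla40} or \ref{fla3}. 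It is conceivable that this analysis in fact rules out ${\rm dist}(x,y)=4$ altogether for distinct lines with a common opposite; in that event the distance-$2$ case alone completes the proof and the intermediate-line construction is unnecessary.
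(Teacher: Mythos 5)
Your skeleton is the right one and matches the paper's: the implication $x\approx y\Rightarrow x^{\rm op}=y^{\rm op}$ is \ref{fla11x}, the case ${\rm dist}(x,y)=2$ is exactly \ref{fla42}, and the case ${\rm dist}(x,y)=4$ must be reduced to the distance-$2$ case by exhibiting a line at distance~$2$ from both $x$ and $y$ with the same set of opposites. But the verification of that last property is the entire substance of the proposition, and you have only gestured at it (``the bookkeeping should parallel \ref{toy22} and \ref{fla41}''). In the paper this step takes a full page: one takes the middle line $z$ of a $4$-path $(x,a,z,b,y)$ (no auxiliary construction is needed for existence), reduces via \ref{fla11x} to the case where no weed contains $x,z$ or $z,y$, observes that one of $(x,a,z)$, $(z,b,y)$ is non-straight and deduces $\Gamma_x\subset\Gamma_2(b)$, and then, for each $w\in x^{\rm op}$, compares the roots $(x,a,u,c,w)$ and $(y,b,v,d,w)$. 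The delicate point is showing $c\ne d$; this is done by contradiction, using $x^{\rm op}=y^{\rm op}$ to get $u^{\rm op}=v^{\rm op}$, invoking \ref{fla42} to manufacture a weed $(f,u,d,v,b)$, transporting $\Gamma_x\subset\Gamma_2(b)$ to $\Gamma_x\subset\Gamma_2(f)$ via \ref{fla5x}(ii), and killing the resulting $6$-circuit with \ref{fla40}. Only then does $(c,w,d,v,b)$ become a root, which via \ref{fla10}(i) yields $w\in z^{\rm op}$; the reverse inclusion needs a separate argument. None of this is present in your proposal, so the main implication remains unproved.

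Two further concrete problems. First, your construction of the intermediate line is not well defined: the two roots from a common opposite $z$ through a point $b\in\Gamma_z$ have the forms $(z,b,u_1,c_1,x)$ and $(z,b,u_2,c_2,y)$, and the lines through $b$ on them need not coincide, so there is no single candidate $w$ meeting both $x$ and $y$; the paper avoids this entirely by taking the middle line of a $4$-path from $x$ to $y$, which meets the distance requirement by construction. Second, the closing speculation that ${\rm dist}(x,y)=4$ might be impossible for distinct lines with $x^{\rm op}=y^{\rm op}$ is unfounded: the paper's argument does not derive a contradiction from this configuration, it handles it, and the non-flat examples of \ref{hau14} show that nontrivial $\approx$-classes genuinely occur. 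You should not plan on that escape route.
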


\begin{proof}
Suppose that $x^{\rm op}=y^{\rm op}$. We claim that $x\approx y$. 
By \ref{fla42}, we have $x\sim y$ if ${\rm dist}(x,y)=2$.
By \ref{toy21}, therefore, it suffices to assume that ${\rm dist}(x,y)=4$. 
Let $(x,a,z,b,y)$ be a $4$-path from $x$ to $y$. 
By \ref{fla42} again, it will suffice to show that $x^{\rm op}=z^{\rm op}$ 
or $z^{\rm op}=y^{\rm op}$.
By \ref{fla11x}, we can assume that there is no weed containing $x$ and $z$
and no weed containing $z$ and $y$.
Since $x$ is not opposite to itself, it is not opposite to $y$. 
Since ${\rm dist}(x,y)=4$, it follows that $(x,a,z)$ or $(z,b,y)$ is not
straight. By symmetry, it suffices to consider the case that $(x,a,z)$ is
not straight. Since there is no weed containing $x$ and $z$, we thus have
\begin{equation}\label{fla43a}
\Gamma_x\subset \Gamma_2(b).
\end{equation}

Let $w\in x^{\rm op}$. There exists a root $(x,a,u,c,w)$ from $x$ to $w$
that contains $a$ and a root $(y,b,v,d,w)$ from $y$ to $w$ that contains $b$. 
Since $(x,a,u)$ is straight but $(x,a,z)$ is not, we have $u\ne z$ and hence $u\ne v$ 
by \ref{fla3}.
We claim that also $c\ne d$. Suppose, instead, 
that $c=d$. An element of $\Gamma_d$ is opposite $x$ (respectively $y)$ if and only if
it is opposite $u$ (respectively $v$) at $d$. Since $x^{\rm op}=y^{\rm op}$, it follows
that an element of $\Gamma_d$ is opposite $u$ at $d$ if and only if it is opposite $v$ at $d$.
Thus if $g,t$ are vertices such that $(u,d,w,g,t)$ is a $4$-path, then
$(u,d,w,g,t)$ is a root if and only if $(v,d,w,g,t)$ is a root. By \ref{fla10}(i), therefore,
$u^{\rm op}=v^{\rm op}$. Hence by \ref{fla42},
there exists $f\in\Gamma_u$ such that $(f,u,d,v,b)$ is a weed. In particular,
${\rm dist}(f,b)=4$, so $f\ne a$. By \ref{fla5x}(ii), we have $\Gamma_2(f)=\Gamma_2(b)$.
By \eqref{fla43a}, therefore, $\Gamma_x\subset \Gamma_2(f)$.
There thus exist $s,h$ such that $(x,a,u,f,s,h,x)$ is a closed $6$-path.
The path $(x,a,u)$ is contained in this path and in the root $(x,a,u,c,w)$.
By \ref{fla40}, this is impossible. With this contradiction, we conclude that, in fact, $c\ne d$,
as claimed. Thus $(c,w,d,v,b)$ is a root. 
By \ref{fla10}(i), therefore, $(c,u,a,z,b)$ is also a root.
Hence $(w,c,u,a,z)$ is a root and thus $w\in z^{\rm op}$. Therefore $x^{\rm op}\subset z^{\rm op}$.

Suppose, conversely, that $w\in z^{\rm op}$. By \ref{fla10}(i), there exist roots
$(z,a,u,c,w)$ and $(z,b,v,d,w)$ from $z$ to $w$, one containing $a$ and the
other containing $b$. By \ref{fla3}, $u\ne v$ and hence by \ref{fla40}, $c\ne d$. 
Therefore $(c,w,d,v,b)$ is a root. Thus $c$ and $b$ are opposite
and hence there is a root $(b,y,e,s,c)$ from $b$ to $c$ that contains $y$.
Therefore there exists $t\in\Gamma_c\backslash\{s\}$ such that $(t,c,s,e,y)$ is a root from $t$ to $y$.
Thus $t\in y^{\rm op}=x^{\rm op}$. By \ref{fla10}(i), therefore, $(t,c,u,a,x)$ is a root.
Thus $(u,a,x)$ and $(u,c,w)$ are both straight. It follows that $(x,a,u,c,w)$
is a root and thus $w\in x^{\rm op}$. We conclude that $x^{\rm op}=z^{\rm op}$.
Therefore $x^{\rm op}=y^{\rm op}$ implies $x\approx y$. The converse holds
by \ref{fla11x}.
\end{proof}

\begin{proposition}\label{fla32}
$\Gamma$ is flat if and only if $\Gamma$ contains no weeds.
\end{proposition}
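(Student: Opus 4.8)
The plan is to reduce the statement to the two equivalences already established: \ref{fla5x}(i), which says $a^{\rm op}=b^{\rm op}$ if and only if $a\simeq b$ for points $a,b$, and \ref{fla43}, which says $x^{\rm op}=y^{\rm op}$ if and only if $x\approx y$ for lines $x,y$. Since flatness is precisely the assertion that $u^{\rm op}=v^{\rm op}$ forces $u=v$ for all vertices $u,v$, and both $\simeq$ and $\approx$ are generated by weeds, the claim should fall out almost directly once the bipartition is taken into account.

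First I would dispose of the forward direction. Assume $\Gamma$ is flat and suppose, for a contradiction, that $\Gamma$ contains a weed $(a,x,b,y,c)$. By \ref{fla1} we have ${\rm dist}(a,c)=4$, so $a\ne c$, and by \ref{fla5} the existence of this weed gives $a\simeq c$. Then \ref{fla5x}(i) yields $a^{\rm op}=c^{\rm op}$ with $a\ne c$, contradicting flatness. Hence a flat green Veldkamp quadrangle has no weeds.

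For the converse, assume $\Gamma$ has no weeds and take vertices $u,v$ with $u^{\rm op}=v^{\rm op}$; I must show $u=v$. The one genuinely new observation is that $u$ and $v$ lie in the same part of the bipartition. By \ref{toy5x}, the trivial straight path at $u$ extends to a straight $4$-path, i.e.\ a root starting at $u$, so $u^{\rm op}\ne\emptyset$. A root has length $4$, so by bipartiteness its two endpoints lie in the same part; hence $u^{\rm op}\subseteq P$ if $u\in P$ and $u^{\rm op}\subseteq L$ if $u\in L$, and likewise for $v$. Since $u^{\rm op}=v^{\rm op}$ is nonempty, $u$ and $v$ cannot be a point and a line, so they lie in the same part.

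Finally I would treat the two parts separately. If $u,v\in P$, then \ref{fla5x}(i) gives $u\simeq v$, but in the absence of weeds \ref{fla5} forces $u=v$. If $u,v\in L$, then \ref{fla43} gives $u\approx v$; since $\approx$ is the transitive--reflexive closure of $\sim$ (see \ref{fla6}) and every instance of $\sim$ requires a weed, the relation $\sim$ is empty, so $\approx$ reduces to equality and again $u=v$. This establishes flatness. I expect the main (and only mild) obstacle to be the bookkeeping that rules out the mixed point/line case; everything else is a direct appeal to \ref{fla5x} and \ref{fla43}.
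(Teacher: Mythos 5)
Your proof is correct and follows essentially the same route as the paper: both directions reduce to \ref{fla5x}(i) and \ref{fla43}, with only the cosmetic difference that in the forward direction you extract $a^{\rm op}=c^{\rm op}$ from the weed's endpoints where the paper extracts $x^{\rm op}=y^{\rm op}$ from its two lines via \ref{fla11x}. Your explicit bipartiteness argument ruling out a point and a line having the same (nonempty) set of opposites is a point the paper passes over silently, and it is handled correctly.
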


\begin{proof}
If $\alpha=(a,x,b,y,c)$ is a weed in $\Gamma$, then 
$x^{\rm op}=y^{\rm op}$ by \ref{fla11x}. 
Hence if $\Gamma$ is flat, it contains no weeds. Suppose, instead,
that $\Gamma$ is not flat. If there exist distinct points
$a,b$ such that $a^{\rm op}=b^{\rm op}$, then 
by \ref{fla5x}(i), $a\simeq b$ and hence there is a weed from
$a$ to $b$. Suppose that there exist distinct lines $x,y$ such that $x^{\rm op}=y^{\rm op}$.
By \ref{fla43}, $x\approx y$. Hence there exists a line $z$ such that $x\sim z$.
It follows that there is a weed containing $x$ and $z$.
\end{proof}

\begin{proposition}\label{fla4}
Let $x,y$ be elements of $L$ such that ${\rm dist}(x,y)=2$ and $x^{\rm op}=y^{\rm op}$ 
and let $b=x\wedge y$. Then 
there exists a unique map $\varphi_{xy}$ from $\Gamma_x$ to $\Gamma_y$ mapping $b$ to itself
such that for all $a\in\Gamma_x\backslash\{b\}$, $(a,x,b,y,\varphi_{xy}(a))$ is a weed. 
\end{proposition}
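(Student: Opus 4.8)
The plan is to obtain existence directly from \ref{fla42} and to deduce uniqueness from the fact that $\simeq$ is an equivalence relation in which two distinct collinear points are never related.

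\emph{Existence.} Since $x^{\rm op}=y^{\rm op}$ and $b=x\wedge y$, I would apply \ref{fla42} to the path $(y,b,x)$, interchanging the roles of the two lines. For a given $a\in\Gamma_x\setminus\{b\}$ (which plays the role of the point of $\Gamma_y\setminus\{a\}$ in \ref{fla42}), this yields a point $c\in\Gamma_y\setminus\{b\}$ such that $(c,y,b,x,a)$ is a weed. Reversing this $4$-path gives a weed $(a,x,b,y,c)$: the three defining properties of a weed in \ref{fla1}---non-straightness, the middle point lying in $P$, and the two endpoints being at distance~$4$---are all invariant under reversing the path. Declaring $\varphi_{xy}(a)=c$ for $a\ne b$ and $\varphi_{xy}(b)=b$ then produces a map with the required property, provided the value $c$ is uniquely determined by $a$.

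\emph{Uniqueness.} Suppose $a\in\Gamma_x\setminus\{b\}$ and that $c,c'\in\Gamma_y$ both make $(a,x,b,y,c)$ and $(a,x,b,y,c')$ weeds. Each weed witnesses, by \ref{fla5}, that $a\simeq c$ and $a\simeq c'$; since $\simeq$ is an equivalence relation by \ref{fla70}, it follows that $c\simeq c'$. But $c$ and $c'$ can be distinct only if ${\rm dist}(c,c')=2$, because both are neighbours of the line $y$ and $\Gamma$ is bipartite, whereas $c\simeq c'$ with $c\ne c'$ would produce (by \ref{fla5}) a weed from $c$ to $c'$ and hence force ${\rm dist}(c,c')=4$. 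This contradiction gives $c=c'$, so $\varphi_{xy}$ is well defined; the same argument shows it is the only map satisfying the stated condition.

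The one step that carries the real content is the uniqueness argument; everything else is bookkeeping with \ref{fla42} together with the reversal symmetry of weeds. The key observation I expect to rely on is that a weed places its two endpoints in the relation $\simeq$, so that the transitivity granted by \ref{fla70} forces two candidate images to be $\simeq$-equivalent, which is incompatible with their being distinct points on the common line $y$.
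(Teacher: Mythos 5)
Your proof is correct and takes essentially the same route as the paper: existence comes from \ref{fla42} (applied with the roles of $x$ and $y$ interchanged, using that reversing a weed yields a weed), and uniqueness comes from the incompatibility of the relation $\simeq$ with collinearity. The only cosmetic difference is that the paper invokes $\Gamma_2(a)=\Gamma_2(c)$ via \ref{fla5x}(ii) to rule out a second candidate directly, whereas you reach the same contradiction through transitivity of $\simeq$ and the distance-$4$ requirement in the definition of a weed.
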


\begin{proof}
Let $a\in\Gamma_x\backslash\{b\}$. By \ref{fla42}, there exists $c$ such that 
$(a,x,b,y,c)$ is a weed.
If $c'\in\Gamma_y\backslash\{b,c\}$, then $c'\in \Gamma_2(c)=\Gamma_2(a)$ by \ref{fla5x}(ii) and hence 
$(a,x,b,y,c')$ is not a weed. 
\end{proof}

\begin{remark}\label{fla4x}
Let $x,y$ be as in \ref{fla4}. Applying \ref{fla4} to $x,y$ and to $y,x$, we obtain
maps $\varphi_{xy}$ and $\varphi_{yx}$. These two maps are inverses of each
other and hence both are bijections.
\end{remark}

\begin{remark}\label{fla4y}
By \ref{fla11x}, the map $\varphi_{xy}$ exists for all $x,y\in L$ such that $x\sim y$.
\end{remark}

\begin{proposition}\label{fla8}
Let $a\in P$. Then the restriction of the reflexive closure of $\sim$ to $\Gamma_a$ is an
equivalence relation.
\end{proposition}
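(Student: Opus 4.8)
The plan is to show that, on $\Gamma_a$, the relation $\sim$ is nothing other than the relation ``$x^{\rm op}=y^{\rm op}$'', which is manifestly an equivalence relation. Reflexivity of the reflexive closure is automatic, and $\sim$ is symmetric directly from \ref{fla6} (the condition ``contains both $x$ and $y$'' is symmetric in $x$ and $y$). Thus the whole content of the statement is transitivity, and this is exactly what the identification with ``$x^{\rm op}=y^{\rm op}$'' delivers for free.

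First I would record the geometry of $\Gamma_a$: two distinct lines $x,y\in\Gamma_a$ are both adjacent to $a$ and lie at even distance (since $\Gamma$ is bipartite and $x,y\in L$), so ${\rm dist}(x,y)=2$ and, by \ref{fla3}, $x\wedge y=a$. In particular $(x,a,y)$ is a genuine $2$-path. Then I would prove the characterization: for distinct $x,y\in\Gamma_a$,
$$x\sim y\quad\Longleftrightarrow\quad x^{\rm op}=y^{\rm op}.$$
The forward implication is immediate from \ref{fla11x}, since $x\sim y$ gives $x\approx y$. For the reverse implication, given $x^{\rm op}=y^{\rm op}$ I would apply \ref{fla42} to the path $(x,a,y)$: choosing any $c\in\Gamma_y\backslash\{a\}$ yields some $b\in\Gamma_x\backslash\{a\}$ with $(b,x,a,y,c)$ a weed, and this weed contains both $x$ and $y$, so $x\sim y$ by \ref{fla6}.

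Taking reflexive closures, for all $x,y\in\Gamma_a$ (now allowing $x=y$) the restricted relation holds if and only if $x^{\rm op}=y^{\rm op}$, since $x=y$ trivially forces $x^{\rm op}=y^{\rm op}$. As equality of $\cdot^{\rm op}$ is evidently an equivalence relation, the claim follows. I expect the only real subtlety to be transitivity: globally $\sim$ need not be transitive, which is precisely why \ref{fla6} introduces the transitive closure $\approx$. The point of restricting to $\Gamma_a$ is that every pair of distinct lines there sits at distance exactly $2$ with common point $a$, and this is exactly the hypothesis under which \ref{fla42} realizes the condition $x^{\rm op}=y^{\rm op}$ by a single weed rather than a chain of weeds; so no genuine chaining, and hence no failure of transitivity, can occur.
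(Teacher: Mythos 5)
Your proof is correct, and it takes a genuinely different route from the paper's. You characterize the restriction of $\sim$ to $\Gamma_a$ as the relation ``$x^{\rm op}=y^{\rm op}$'', getting the forward implication from \ref{fla11x} and the reverse implication from \ref{fla42} applied to the $2$-path $(x,a,y)$; transitivity is then inherited from equality of opposite-sets. The paper instead argues at the level of points: given $u\sim v\sim w$ in $\Gamma_a$ with $u\ne w$, it uses the maps $\varphi_{uv}$ and $\varphi_{vw}$ of \ref{fla4y} to produce points $b\in\Gamma_u$, $c\in\Gamma_v$, $d\in\Gamma_w$ with $b\simeq c\simeq d$, invokes transitivity of $\simeq$ on $P$ (\ref{fla70}) to get $b\simeq d$ with $b\ne d$ (via \ref{fla3}), and then \ref{fla10}(ii) forces the $4$-path $(b,u,a,w,d)$ to be a weed, so $u\sim w$. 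Both arguments ultimately rest on \ref{fla42} --- the paper's does so indirectly through \ref{fla4} and \ref{fla4y} --- but yours is shorter and makes the underlying reason (equality of opposite-sets is already an equivalence relation) more transparent, while the paper's version exercises the $\varphi$-map machinery that is reused immediately afterwards in the proof of \ref{fla7}. Two small points you should make explicit: for distinct $x,y\in\Gamma_a$ the triple $(x,a,y)$ really is a path in the sense of \ref{toy3} (this needs $x\ne y$), and $\Gamma_y\backslash\{a\}\ne\emptyset$ by $2$-plumpness, so \ref{fla42} does produce the required weed.
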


\begin{proof}
Suppose that $u\sim v$, $v\sim w$ and $w\ne u$ for $u,v,w\in\Gamma_a$.
By \ref{fla4y}, there exist $b\in\Gamma_u$, $c\in\Gamma_v$ and $d\in\Gamma_w$ such that $b\simeq c$
and $c\simeq d$. By \ref{fla70},
it follows that $b\simeq d$. By \ref{fla3}, $b\ne d$. 
Hence by \ref{fla10}(ii), $(b,u,a,w,d)$ is a weed. Hence $u\sim w$.
\end{proof}

\begin{proposition}\label{fla7}
Let $x,y\in\Gamma_a$ for some $a\in P$ and suppose that ${\rm dist}(x,y)=2$. Then $x\sim y$ if and
only if $x\approx y$.
\end{proposition}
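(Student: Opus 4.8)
The plan is to dispatch one direction trivially and reduce the other to the two technical results \ref{fla42} and \ref{fla43}, which were assembled precisely for this purpose. The forward implication $x\sim y\Rightarrow x\approx y$ is immediate: by \ref{fla6}, $\approx$ is the transitive–reflexive closure of $\sim$, so any pair related by $\sim$ is a fortiori related by $\approx$. No hypothesis on ${\rm dist}(x,y)$ or on the common neighbor $a$ is needed here.

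For the converse, I would assume $x\approx y$ and first invoke \ref{fla43}, which tells us that $x\approx y$ is equivalent to $x^{\rm op}=y^{\rm op}$; thus $x$ and $y$ have equal opposite-sets. Next I would observe that, because $x,y\in\Gamma_a$ and ${\rm dist}(x,y)=2$ forces $x\ne y$, the sequence $(x,a,y)$ is a genuine $2$-path in the sense of \ref{toy3}. (In fact $a$ is the unique common neighbor $x\wedge y$ of \ref{fla3x}, though only the fact that $(x,a,y)$ is a path will be used.) These are exactly the hypotheses of \ref{fla42}.

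Applying \ref{fla42} to the path $(x,a,y)$, and choosing any $c\in\Gamma_y\backslash\{a\}$ (which exists since $\Gamma$ is $2$-plump, so $|\Gamma_y|\ge3$), yields a point $b\in\Gamma_x\backslash\{a\}$ for which $(b,x,a,y,c)$ is a weed. This weed contains both $x$ and $y$, so by the definition of $\sim$ in \ref{fla6} we conclude $x\sim y$, completing the converse.

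There is essentially no obstacle in the argument itself: the content has been front-loaded into \ref{fla42} (existence of a weed through a prescribed line, given equal opposite-sets) and \ref{fla43} (the equivalence $x\approx y\Leftrightarrow x^{\rm op}=y^{\rm op}$). The only point requiring care is recognizing \emph{why} the hypothesis ${\rm dist}(x,y)=2$ is what makes the reverse direction work: it guarantees that $x$ and $y$ lie on a common $2$-path through a point, so the single weed produced by \ref{fla42} directly certifies $x\sim y$ rather than merely $x\approx y$. Without the distance-$2$ assumption, $x\approx y$ could in general only be realized through a chain of several $\sim$-steps, and the conclusion would fail.
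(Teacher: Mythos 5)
Your proof is correct, and it is genuinely different from the one in the paper. The paper proves \ref{fla7} by a direct combinatorial argument: it takes a chain $u_0\sim u_1\sim\cdots\sim u_m$ with $u_0,u_m\in\Gamma_a$, assumes it is a minimal counterexample, and derives a contradiction using the transport maps $\varphi_{xy}$ of \ref{fla4}, the transitivity of $\simeq$ on points, and \ref{fla8} and \ref{fla10}; it never invokes \ref{fla43}. You instead route everything through \ref{fla43} (which does appear earlier in the text and whose proof depends only on \ref{fla42}, \ref{toy21}, \ref{fla11x}, \ref{fla3}, \ref{fla40}, \ref{fla5x} and \ref{fla10}, so there is no circularity) to convert $x\approx y$ into $x^{\rm op}=y^{\rm op}$, and then apply \ref{fla42} once to the path $(x,a,y)$ to produce a weed containing $x$ and $y$. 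All hypotheses are verified correctly: ${\rm dist}(x,y)=2$ gives $x\ne y$, so $(x,a,y)$ is a path, and $2$-plumpness supplies the needed $c\in\Gamma_y\backslash\{a\}$. Your argument is considerably shorter; what the paper's version buys is that it stays entirely ``local'' to the pencil of lines through $a$ and to chains of weeds, packaging \ref{fla7} together with \ref{fla8} as statements about the restriction of $\sim$ to $\Gamma_a$ without appealing to the global characterization of $\approx$ in terms of opposite sets. Your closing remark about the role of the distance-$2$ hypothesis is also accurate: a weed forces its two lines to be at distance~$2$, so for lines at distance~$4$ the relation $\approx$ can hold while $\sim$ fails.
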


\begin{proof}
Let $u_0,u_1,u_2\in L$ and suppose that $u_0\in\Gamma_a$, $u_0\sim u_1$,
$u_1\sim u_2$ and $u_2\ne u_0$. By \ref{fla4y},
we can choose a weed $(a,u_0,b,u_1,c)$ starting at $a$ and containing $u_0$ and $u_1$.

We claim that the following hold:
\begin{enumerate}[\rm(i)]
\item If $u_2\in\Gamma_a$, then $u_0\sim u_2$.
\item If $u_2\not\in\Gamma_a$, then there exists $u_2'\in\Gamma_a$ such that 
$u_2'\sim u_2$ and either $u_2'\sim u_0$ or $u_2'=u_0$.
\end{enumerate}
Choose $h\in\Gamma_{u_0}\backslash\{a,b\}$.
Let $i=\varphi_{u_0u_1}(h)$ and $j=\varphi_{u_1u_2}(i)$. Then $h\simeq i$ and $i\simeq j$
so by \ref{fla5}, $h\simeq j$. Since ${\rm dist}(a,h)=2$, we have $a\not\simeq h$
and thus $j\ne a$. We also have $j\in\Gamma_{u_2}$. Suppose now that $u_2\in\Gamma_a$. 
Since $u_0\ne u_2$, $(h,u_0,a,u_2,j)$ is a $4$-path and therefore 
a weed by \ref{fla10}(ii). Thus (i) holds. Suppose, instead, that $u_2\not\in\Gamma_a$. 
In this case, we let $g=\varphi_{u_1u_2}(c)$. 
Then $g\simeq c$. Since $a\simeq c$, it follows by \ref{fla70} that $a\simeq g$. 
By \ref{fla10}(ii), therefore, there exists a weed $(a,u_2',f,u_2,g)$ from $a$ to $g$ containing $u_2$. 
Thus $u_2'\sim u_2$.  
Let $k=\varphi_{u_2u_2'}(j)$. Then $j\simeq k$ and hence $h\simeq k$. Since
$a\not\simeq h$, we have $k\ne a$. We also have $k\in\Gamma_{u_2'}$.
Hence if $u_0\ne u_2'$, then $(h,u_0,a,u_2',k)$ is a 
$4$-path and thus a weed, again by \ref{fla10}(ii), and therefore $u_0\sim u_2'$.
Thus (ii) holds.

Let $\Omega$ be the set of sequences $u_0,\ldots,u_m$ of elements of $L$ for arbitrary $m$
such that $u_i\sim u_{i-1}$ for all $i\in[1,m]$ and $u_0,u_m\in\Gamma_a$ but neither
$u_0=u_m$ nor $u_0\sim u_m$ holds. Suppose that $\Omega$ is non-empty. Let 
$u_0,\ldots,u_m$ be a sequence in $\Omega$ of minimal length. Then $m\ge2$. Suppose $u_2\in\Gamma_a$.
The sequence $u_2,\dots,u_m$ is too short to be in $\Omega$. Hence $u_2\sim u_m$ or $u_2=u_m$.
By (i) and \ref{fla8}, it follows that $u_0\sim u_m$.
This contradicts, however, our choice of $u_0,\ldots,u_m$. Suppose instead that
$u_2\not\in\Gamma_a$ and let $u_2'$ be as in (ii). Thus $u_2'\sim u_2$, but 
again the sequence $u_2',u_2,u_3,\ldots,u_m$ is too short to 
be in $\Omega$. Hence $u_2'\sim u_m$ or $u_2'=u_m$. Since either $u_0=u_2'$ or $u_0\sim u_2'$,
we conclude that $u_0=u_m$ or $u_0\sim u_m$ by one more
application of \ref{fla8}. Again this contradicts our choice of $u_0,\ldots,u_m$. 
Thus $\Omega$ is empty.
\end{proof}

\begin{notation}\label{fla12}
Let $\bar P$ be the set of equivalence classes of the equivalence relation $\simeq$ on $P$
and let $\bar L$ be the set of equivalence classes of the equivalence relation $\approx$ on $L$.
Let $\bar\Gamma$ be the bipartite graph whose vertex set is the disjoint union of $\bar P$
and $\bar L$, where
$[a]\in\bar P$ is adjacent to $[x]\in\bar P$ whenever there exists an element of $[a]$
adjacent in $\Gamma$ to an element of $[x]$. We declare that two elements of $\bar P$
adjacent in $\bar\Gamma$
to a vertex $[x]$ in $\bar L$ are {\it opposite at $[x]$} if they are distinct
and we declare that two elements $[x]$ and $[y]$ of $\bar L$
adjacent in $\bar\Gamma$ to a vertex $[a]$ in
$\bar P$ are {\it opposite at $[a]$}
if there is point $b$ in $[a]$ such that all the lines in $\Gamma_b\cap[x]$
are all opposite all the lines in $\Gamma_b\cap[y]$. 
We can identify $\bar\Gamma$ with $\Gamma$ in the case that
the equivalence classes $[x]$ for $x\in L$ and 
$[a]$ for $a\in P$ are all singletons, 
\end{notation}

\begin{proposition}\label{fla32x}
$\Gamma$ is flat if and only $\Gamma=\bar\Gamma$.
\end{proposition}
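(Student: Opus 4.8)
The plan is to reduce the statement to Proposition~\ref{fla32}, which already establishes that $\Gamma$ is flat if and only if $\Gamma$ contains no weeds. In view of the identification described at the end of \ref{fla12}, the equality $\Gamma=\bar\Gamma$ holds precisely when all the equivalence classes $[a]$ for $a\in P$ (under $\simeq$) and all the classes $[x]$ for $x\in L$ (under $\approx$) are singletons. So it suffices to prove that $\Gamma$ contains no weeds if and only if all these equivalence classes are singletons, and then chain this with \ref{fla32}.

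First I would treat the implication from ``no weeds'' to ``all classes singletons.'' Suppose $\Gamma$ has no weeds. If $a,b\in P$ are distinct, then $a\simeq b$ would, by \ref{fla5}, force the existence of a weed from $a$ to $b$, which is impossible; hence every $\simeq$-class is a singleton. Likewise, if $x,y\in L$ are distinct, then $x\sim y$ would require a weed containing both $x$ and $y$ (see \ref{fla6}), again impossible; thus $\sim$ relates no two distinct lines, and therefore so does its transitive-reflexive closure $\approx$. Hence all $\approx$-classes are singletons as well.

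For the reverse implication, suppose all the equivalence classes are singletons but, for contradiction, that $\Gamma$ contains a weed $\alpha=(a,x,b,y,c)$. By \ref{fla5}, $a\simeq c$, while the definition of a weed in \ref{fla1} gives ${\rm dist}(a,c)=4$ and hence $a\ne c$. Then the class $[a]$ contains the two distinct points $a$ and $c$, contradicting the assumption that $[a]$ is a singleton. (One could equally invoke the two distinct lines $x,y$ of $\alpha$, which satisfy $x\sim y$, to contradict the singleton condition on $\bar L$.) Therefore $\Gamma$ has no weeds. Combining the two implications with \ref{fla32} gives the result.

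I expect no real obstacle here: the substantive content—that flatness is equivalent to the absence of weeds—has already been carried out in \ref{fla32}, and the remaining work is simply to unwind the definitions of $\simeq$, $\sim$, and $\approx$ from \ref{fla5} and \ref{fla6} and to match them against the singleton condition furnished by the identification in \ref{fla12}. The only point requiring any care is to observe that \emph{every} weed contributes both a nontrivial $\simeq$-relation between its two endpoints and a nontrivial $\sim$-relation between its two lines, so that the vanishing of weeds is genuinely equivalent to the triviality of \emph{both} quotient equivalence relations simultaneously.
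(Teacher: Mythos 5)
Your proof is correct and follows essentially the same route as the paper: both reduce the statement to \ref{fla32} by observing that the absence of weeds is equivalent, via the definitions in \ref{fla5} and \ref{fla6}, to all the equivalence classes being singletons, which by the identification in \ref{fla12} is exactly the condition $\Gamma=\bar\Gamma$.
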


\begin{proof}
We have $[x]\sim[y]$ for distinct
$x,y\in L$ if and only if there is a weed containing $x$ and $y$ and $[a]\simeq[b]$ for 
distinct $a,b\in P$ if and only if there is a weed containing $a$ and $b$. Hence
$\Gamma$ contains no weeds if and only if the equivalence classes comprising the 
vertex set of $\bar\Gamma$ are all singletons. Hence $\Gamma$ contains no weeds if and only 
if $\Gamma=\bar\Gamma$. The claim holds, therefore, by \ref{fla32}.
\end{proof}

\begin{theorem}\label{fla14}
Let $\bar\Gamma$ be the graph endowed with local opposition relations defined in {\rm\ref{fla12}}.
Then $\bar\Gamma$ is a flat green Veldkamp quadrangle.
\end{theorem}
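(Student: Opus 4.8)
The plan is to verify, one at a time, every clause in the definition of a flat green Veldkamp quadrangle for $\bar\Gamma$: that its local opposition relations are well defined, symmetric, anti-reflexive and $2$-plump; that (VP1)--(VP3) hold with $n=4$; that it is green; and that it is flat. Throughout I transfer configurations along the projection $\pi\colon v\mapsto[v]$ in both directions. Several clauses are quick. The relation at a line $[x]\in\bar L$ is trivial by construction, so $\bar\Gamma$ is green, and the corresponding $2$-plumpness is just thickness: two distinct neighbours of a line of $\Gamma$ are at distance $2$ and hence never $\simeq$-equivalent, so distinct neighbours of $x$ map to distinct points of $\bar\Gamma$, and the thickness of $\Gamma$ (from $2$-plumpness) yields at least three point classes incident with $[x]$. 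Connectedness and bipartiteness in (VP1) are inherited from $\Gamma$ by pushing paths forward.

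The crux is the relation at a point $[a]\in\bar P$, which \ref{fla12} defines by an existential quantifier over representatives $b\in[a]$; the main obstacle is to prove it independent of that representative. The engine is a reformulation of local opposition in $\Gamma$: for lines $u,v\in\Gamma_b$, the pair $(u,b,v)$ is straight if and only if every $p\in\Gamma_u\setminus\{b\}$ is opposite every $q\in\Gamma_v\setminus\{b\}$, since in the straight case $(p,u,b,v,q)$ is a root while in the non-straight case it is a weed when $\mathrm{dist}(p,q)=4$ (so $p\notin q^{\mathrm{op}}$ by \ref{fla10}(ii)) and $p\notin q^{\mathrm{op}}$ anyway when $\mathrm{dist}(p,q)<4$ by \ref{toy20}. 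Global point opposition is $\simeq$-invariant by \ref{fla5x}, and the perspectivities $\varphi_{xy}$ of \ref{fla4} and \ref{fla4y} give $\simeq$-class-preserving bijections $\Gamma_u\to\Gamma_{u'}$ whenever $u\approx u'$, carrying the common point of two $\simeq$-equivalent points to the corresponding one (two equivalent points on one line coincide). Combining these three facts transports an opposite pair through $b$ to an opposite pair through any $b'\in[a]$, after which \ref{fla11} and \ref{fla7} upgrade ``some opposite pair'' to ``every opposite pair'', giving independence. With independence in hand, $2$-plumpness at $[a]$ follows by lifting two incident line classes to genuine lines through a single representative $b$, applying the $2$-plumpness of $\equiv_b$, and projecting the opposite line back.

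Independence makes precise a dictionary between straight paths. I will prove a \emph{lifting lemma} (every straight path of $\bar\Gamma$ lifts, from any prescribed initial representative, to a straight path of $\Gamma$ with the prescribed vertex classes) and a \emph{projection lemma} (every straight path of $\Gamma$ projects to a straight path of $\bar\Gamma$ of the same length with no collapse, because vertices two apart on a straight path of $\Gamma$ are opposite at the middle vertex and hence lie in distinct classes). In the lifting step the only nontrivial point is straightness at a point vertex, which is exactly what independence supplies, while existence of the next vertex in the prescribed class uses that $\simeq$- or $\approx$-equivalent vertices meet the same neighbour classes (from \ref{fla10}(ii) and the perspectivities). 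Given this dictionary, (VP3) is immediate: lift a straight $5$-path of $\bar\Gamma$, enclose it in a straight $8$-circuit of $\Gamma$ by (VP3) for $\Gamma$, and project, noting that diametrically opposite vertices of the octagon are opposite in $\Gamma$ and so remain in distinct classes, whence the image is a genuine straight $8$-circuit.

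Finally, (VP2) and flatness. For (VP2) I lift two straight paths of length at most $3$ with common endpoints to straight paths of $\Gamma$ arranged to share one endpoint (lifting one path's reverse from the other's terminal vertex); their remaining endpoints then lie in a common class. If these coincide, (VP2) for $\Gamma$ finishes; if they are distinct but equivalent, a short argument shows the two $\bar\Gamma$-paths coincide anyway---for instance, two straight $2$-paths from distinct, $\simeq$-equivalent points $a_0,c_0$ to a common point $a_k$ have middle lines $\ell,\ell'$ through $a_k$ that cannot be opposite at $a_k$ (else $(a_0,\ell,a_k,\ell',c_0)$ would be a root and $a_0,c_0$ opposite, contradicting \ref{fla5x}), so $(a_0,\ell,a_k,\ell',c_0)$ is a weed, $\ell\approx\ell'$, and the projected paths agree. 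For flatness, once $\bar\Gamma$ is known to be a green Veldkamp quadrangle I invoke \ref{fla32} for $\bar\Gamma$: a weed of $\bar\Gamma$ would lift to a weed of $\Gamma$ whose endpoints are $\simeq$-equivalent, hence equal in $\bar P$, contradicting that the endpoints of a weed are at distance $4$. I expect the representative-independence of the point relation to be the principal difficulty; the remaining verifications are bookkeeping on top of the lifting and projection lemmas.
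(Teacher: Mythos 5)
Your proposal is correct and follows essentially the same architecture as the paper's proof: the representative-independence of the opposition relation at point classes is the paper's Proposition \ref{fla16} (proved there via weeds and \ref{fla10}(i), which is the same mechanism as your perspectivity/endpoint-opposition transport), your lifting and projection lemmas are \ref{fla20}, your no-collapse and uniqueness arguments are \ref{fla21} and \ref{fla22}, and (VP3), greenness and flatness are handled exactly as in the paper (the last via \ref{fla32}). No gaps.
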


\begin{proof}
We will prove \ref{fla14} in a series of steps. 

\begin{proposition}\label{fla15}
Let $[a]\in\bar P$ and $[x]\in\bar L$ be adjacent in $\bar\Gamma$.
Then $\Gamma_b\cap[x]\ne\emptyset$ for each $b\in[a]$ and
$\Gamma_y\cap[a]\ne\emptyset$ for each $y\in[x]$.
\end{proposition}

\begin{proof}
Since $[a]$ and $[x]$ are adjacent in $\bar\Gamma$, we can assume
that $x\in\Gamma_a$. Let $b\in[a]\backslash\{a\}$. 
Then $a\simeq b$ but $a\ne b$. By \ref{fla10}(ii), there exists
a weed $(a,x,c,y,b)$ from $a$ to $b$ 
that contains $x$. Then $y\in\Gamma_b\cap[x]$.

Now let $z\in[x]\backslash\{x\}$. We want to show that $\Gamma_z\cap[a]\ne\emptyset$.
It will suffice to assume that $z\sim x$ (rather than $z\approx x$) and that
$z\not\in\Gamma_z$. By 
\ref{fla11x} and \ref{fla42}, there is a weed starting at $a$ and containing $x$ and $z$.
Thus $e\in\Gamma_z\cap[a]$, where $e$ is the last vertex of this weed.
\end{proof}

\begin{proposition}\label{fla16}
Let $a,b\in P$, $x,y\in\Gamma_a$ and $u,v\in\Gamma_b$. Suppose that $x$ and $y$
are opposite at $a$, $[a]=[b]$, $[x]=[u]$ and $[y]=[v]$. Then $u$ and $v$ are opposite
at $b$.
\end{proposition}

\begin{proof}
Suppose that $a=b$. By \ref{fla7}, we have $x\sim u$ or $x=u$ and $y\sim v$ or $y=v$.
By \ref{fla11}, therefore, $u$ and $v$ are opposite at $a$.

Suppose that $a\ne b$. 
By \ref{fla10}(ii), there exist weeds
$\alpha=(a,x,d,w,b)$ and $\beta=(a,y,e,z,b)$ 
from $a$ to $b$ passing through $x$ and through $y$.
Then $[u]=[x]=[w]$ and $[v]=[y]=[z]$. Since $x$ and $y$ are opposite at $a$,
$(d,x,a,y,e)$ is a root. By \ref{fla10}(i), therefore, $(d,w,b,z,e)$ is a root. Hence
$w$ and $z$ are opposite at $b$. By the conclusion of the previous paragraph
(with $b$ in place of $a$), it follows that $u$ and $v$ are opposite at $b$.
\end{proof}

\begin{proposition}\label{fla20}
Let $\pi$ be the natural map from $\Gamma$ onto $\bar\Gamma$ sending
$a$ to $[a]$ for all $a\in P$ and $x$ to $[x]$ for all $x\in L$.
Then the following hold:
\begin{enumerate}[\rm(i)]
\item $\pi$ maps edges of $\Gamma$ to edges of $\bar\Gamma$.
\item For each vertex $u$ of $\Gamma$, paths in $\bar\Gamma$ starting at $[u]$ 
lift via $\pi$ to paths in $\Gamma$ starting at $u$.
\item Let $\gamma$ be a path in $\Gamma$. Then $\gamma$ is straight if and only if
$\pi(\gamma)$ is a straight path in $\bar\Gamma$.
\end{enumerate}
\end{proposition}

\begin{proof}
The assertions follow from \ref{fla12}, \ref{fla15} and \ref{fla16}. We only need to observe that
if $(a,x,b)$ is a $2$-path with $x\in L$, then ${\rm dist}(a,b)=2$ and therefore
$[a]\ne[b]$ and if $(x,a,y)$ is a straight $2$-path with $a\in P$, then there is no weed containing
$x$ and $y$ and hence $[x]\ne[y]$ by \ref{fla7}.
\end{proof}

\begin{proposition}\label{fla21}
$\bar\Gamma$ has no circuits of length~$4$.
\end{proposition}

\begin{proof}
Suppose that $\beta:=([a],[x],[b],[y],[c])$ is a $4$-path in $\bar\Gamma$
such that $[a]=[c]$. By \ref{fla20}(ii), we can assume that $\alpha:=(a,x,b,y,c)$ is a $4$-path
in $\Gamma$. By \ref{fla3}, $a\ne c$. By \ref{fla10}(ii), therefore, $\alpha$ is a weed.
Therefore $[x]=[y]$. This contradicts the assumption that $\beta$ is a path.
\end{proof}

\begin{proposition}\label{fla22}
Let $\alpha=([a],[x],[b],[y])$ be a straight $3$-path in $\bar\Gamma$ with 
$a\in P$. Then $\alpha$ is the unique straight path of length at most~$3$ in $\bar\Gamma$
from $[a]$ to $[y]$.
\end{proposition}

\begin{proof}
By \ref{fla20}(ii)-(iii), we can assume that $(a,x,b,y)$ is a straight path in $\Gamma$. 
By \ref{fla21}, $[a]$ and $[y]$ are not adjacent.
Since $\bar\Gamma$ is bipartite, it follows that the distance from $[a]$ to $[y]$ in $\bar\Gamma$
is~$3$. Let $\gamma=([a],[z],[d],[y])$ be an arbitrary straight $3$-path from $[a]$ to $[y]$.
It will suffice to show that $\gamma=\alpha$.

By \ref{fla20}(ii)-(iii), we can assume that $(e,z,d,y)$ is a straight $3$-path for
some $e\in\Gamma_z\cap[a]$. By \ref{fla5x}(ii), $\Gamma_2(e)=\Gamma_2(a)$.
Hence $b\in \Gamma_2(e)$, so by \ref{toy20}, $b$ and $e$ are not opposite.
Therefore $(e,z,d,y,b)$ is not a root. Hence
$b=d$. Thus $([a],[z],[d])$ and $([a],[x],[b])$ are both paths
in $\bar\Gamma$ from $[a]$ to $[d]$. By \ref{fla21}, it follows that $[z]=[x]$.
Therefore $\gamma=\alpha$.
\end{proof}

We can now show that $\bar\Gamma$ satisfies the axioms of a Veldkamp quadrangle.
Let $u\in P\cup L$ and let $[v]$ and $[w]$ be two vertices adjacent to $[u]$ in $\bar\Gamma$.
By \ref{fla20}(ii), we can assume that $v,w\in\Gamma_u$. Since $\Gamma$ is $2$-plump,
there exists $z\in\Gamma_u$
opposite $u$ and $v$ at $u$. By \ref{fla20}(iii), $[z]$ is opposite $[u]$ and $[v]$ at $[u]$.
Thus $\bar\Gamma$ is $2$-plump.

Since $\Gamma$ is connected and every vertex of $\bar\Gamma$ is in the image of $\pi$, 
\ref{fla20}(i) implies that $\bar\Gamma$ is connected. It
is bipartite by construction. Thus $\bar\Gamma$ satisfies (VP1).
By \ref{fla21} and \ref{fla22}, $\bar\Gamma$ satisfies (VP2) with $n=4$. 
By \ref{fla20}(ii)-(iii), a straight $5$-path in $\bar\Gamma$ lifts to a straight $5$-path
in $\Gamma$ (via $\pi$) and a straight $8$-circuit in $\Gamma$ maps to a straight
$8$-circuit in $\bar\Gamma$. Since $\Gamma$ satisfies (VP3) with $n=4$,
it follows that $\bar\Gamma$ does as well. 
By \ref{fla12}, the local opposition relations at elements of $\bar L$ are trivial.
Thus $\bar\Gamma$ is a green Veldkamp quadrangle.

Suppose that $([a],[x],[b],[y],[c])$ is a weed in $\bar\Gamma$. 
Thus the distance between $[a]$ and $[c]$ in $\bar\Gamma$ is~$4$. 
By \ref{fla20}(ii)-(iii), 
we can assume that $\alpha:=(a,x,b,y,c)$ is a path that is not straight
and by \ref{fla20}(i), ${\rm dist}(a,c)=4$.
Hence $\alpha$ is a weed, but this implies that $[a]=[c]$. With this contradiction,
we conclude that $\bar\Gamma$ does not contain
any weeds. By \ref{fla32}, it follows that $\bar\Gamma$ is flat. 
This concludes the proof of \ref{fla14}.
\end{proof}

\begin{proposition}\label{fla30}
If $\Gamma$ is $k$-plump for some $k\ge3$, then $\bar\Gamma$ is also $k$-plump.
\end{proposition}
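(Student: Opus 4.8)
The plan is to unwind the definition of $k$-plumpness in \ref{toy2} and \ref{toy4}: I must show that for every vertex of $\bar\Gamma$ the local opposition relation defined in \ref{fla12} is $k$-plump. Since $\bar\Gamma$ is bipartite with parts $\bar P$ and $\bar L$, I would treat line-classes and point-classes separately, the former being easy because the relation there is trivial.

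First, for a vertex $[x]\in\bar L$: since $\bar\Gamma$ is green, $\equiv_{[x]}$ is trivial, so it is $k$-plump exactly when $[x]$ has at least $k+1$ neighbors in $\bar\Gamma$. I would show that $\pi$ restricts to an injection of $\Gamma_x$ into $\bar\Gamma_{[x]}$: by \ref{fla20}(i) it sends $\Gamma_x$ into $\bar\Gamma_{[x]}$, and if $a,b$ are distinct neighbors of $x$ then ${\rm dist}(a,b)=2$ (they share $x$), so $a\not\simeq b$ and hence $[a]\ne[b]$. Because $\Gamma$ is $k$-plump, the trivial relation $\equiv_x$ is $k$-plump, which by \ref{toy2} forces $|\Gamma_x|\ge k+1$; therefore $|\bar\Gamma_{[x]}|\ge k+1$ and $\equiv_{[x]}$ is $k$-plump.

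The substantive case is a vertex $[a]\in\bar P$. Given a set $S=\{[x_1],\ldots,[x_m]\}$ of neighbors of $[a]$ with $m\le k$, I would use \ref{fla15} to pick a representative $x_i'\in\Gamma_a\cap[x_i]$ for each $i$ (these are distinct since the $[x_i]$ are). As $\Gamma$ is $k$-plump, there is $z\in\Gamma_a$ opposite every $x_i'$ at $a$, and the candidate I would propose is $[z]$. It then remains to verify, using the point $b=a$ in \ref{fla12}, that every line in $\Gamma_a\cap[z]$ is opposite every line in $\Gamma_a\cap[x_i]$ at $a$; this is the main obstacle, and it amounts to showing that opposition at $a$ is constant on the relevant classes.

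For this last step I would fix $w\in\Gamma_a\cap[x_i]$ and $z'\in\Gamma_a\cap[z]$. Since $w\approx x_i'$ with both in $\Gamma_a$, \ref{fla7} gives $w\sim x_i'$ or $w=x_i'$, so applying \ref{fla11} to ``$z$ opposite $x_i'$ at $a$'' yields $z$ opposite $w$ at $a$. Likewise $z'\sim z$ or $z'=z$, and a second application of \ref{fla11} (now with $w$ as the fixed opposite vertex) gives $z'$ opposite $w$ at $a$. Hence all lines in $\Gamma_a\cap[z]$ are opposite all lines in $\Gamma_a\cap[x_i]$, so $[z]$ is opposite $[x_i]$ at $[a]$ by \ref{fla12}, for every $i$. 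The anti-reflexivity implicit in \ref{fla12} guarantees $[z]\ne[x_i]$, so $[z]$ is a genuine neighbor opposite every element of $S$. Since $|S|\le k$ was arbitrary, $\equiv_{[a]}$ is $k$-plump, completing the argument.
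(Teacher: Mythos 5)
Your proof is correct and follows essentially the same route as the paper, whose proof simply reapplies its $2$-plumpness argument: lift the given neighbors of $[u]$ to neighbors of $u$ via \ref{fla20}(ii), invoke the $k$-plumpness of $\Gamma$, and push the resulting opposite vertex back down via \ref{fla20}(iii). The only difference is that you re-derive the descent of local opposition inline from \ref{fla7} and \ref{fla11} (which is exactly how the paper proves \ref{fla16}, the substance behind \ref{fla20}(iii)) and settle the line-classes by a cardinality count rather than the same lifting argument.
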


\begin{proof}
This holds by the same argument just used to show that $\bar\Gamma$ $2$-plump.
\end{proof}

\begin{remark}\label{fla88}
Note that the construction of $\bar\Gamma$
in \ref{fla12} does not involve any choices and thus $\bar\Gamma$ is uniquely determined
by $\Gamma$. We call it the {\it flat quotient} of $\Gamma$.
\end{remark}

\begin{proposition}\label{fla31}
The following hold:
\begin{enumerate}[\rm(i)]
\item $\pi$ restricts to an bijection from $\Gamma_x$ to $\bar\Gamma_{[x]}$ for all $x\in L$.
\item $\pi(\Gamma_a)=\bar\Gamma_{[a]}$ for all $a\in P$, but the restriction of $\pi$ to $\Gamma_a$ is 
injective for all $a\in P$ if and only if $\Gamma$ is flat.
\end{enumerate}
\end{proposition}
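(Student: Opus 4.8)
The plan is to prove the two parts using the lifting properties of $\pi$ established in \ref{fla20} together with the characterizations of the equivalence relations $\approx$ (on $L$) and $\simeq$ (on $P$) in terms of the sets $u^{\rm op}$. For part (i), I would first note that the map is well-defined and surjective by \ref{fla15}: every neighbor $[a]$ of $[x]$ in $\bar\Gamma$ has a representative in $\Gamma_x$, so $\pi$ restricted to $\Gamma_x$ surjects onto $\bar\Gamma_{[x]}$. The content is injectivity. Suppose $a,b\in\Gamma_x$ with $[a]=[b]$ but $a\ne b$. Then $a\simeq b$, and since $\Gamma$ is green the $2$-path $(a,x,b)$ shows ${\rm dist}(a,b)=2$; but by \ref{fla5x}(i) we would have $a^{\rm op}=b^{\rm op}$, which is impossible for two distinct points at distance~$2$ by the argument already used in \ref{fla5x} (a root starting at $a$ through $x$ reaches a vertex in $a^{\rm op}\setminus b^{\rm op}$). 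Hence $a=b$, giving injectivity.

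\begin{proof}
For (i), let $x\in L$. By \ref{fla15}, for each $[a]\in\bar\Gamma_{[x]}$ there exists
$a'\in\Gamma_x\cap[a]$, so $\pi(a')=[a]$; thus $\pi$ maps $\Gamma_x$ onto $\bar\Gamma_{[x]}$.
Suppose $a,b\in\Gamma_x$ satisfy $[a]=[b]$. If $a\ne b$, then $a\simeq b$, so by \ref{fla5x}(i)
we have $a^{\rm op}=b^{\rm op}$. Since $\Gamma$ is green, there is a root $\alpha$ starting
$(a,x,\ldots)$, and its last vertex lies in $a^{\rm op}$; as ${\rm dist}(a,b)=2$, the same
argument applied with $b$ in place of $a$ (using \ref{fla3} to see $a\ne b$ gives distinct
straight paths) yields a vertex of $a^{\rm op}\setminus b^{\rm op}$, contradicting
$a^{\rm op}=b^{\rm op}$. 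Hence $a=b$, and $\pi$ is injective on $\Gamma_x$.

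For (ii), let $a\in P$. By \ref{fla15} again, $\pi(\Gamma_a)=\bar\Gamma_{[a]}$. We show the
restriction of $\pi$ to $\Gamma_a$ is injective for every $a\in P$ if and only if $\Gamma$ is flat.
Suppose first that $\Gamma$ is not flat. By \ref{fla32}, there is a weed
$(c,x,b,y,d)$, and then $x\sim y$ with ${\rm dist}(x,y)=2$, so $b=x\wedge y$ satisfies
$x,y\in\Gamma_b$ with $[x]=[y]$ by \ref{fla6} but $x\ne y$ by \ref{fla3}. Thus $\pi$ is not
injective on $\Gamma_b$. Conversely, suppose $\pi$ fails to be injective on some $\Gamma_a$:
there are distinct $x,y\in\Gamma_a$ with $[x]=[y]$, i.e. $x\approx y$. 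Since
${\rm dist}(x,y)=2$, \ref{fla7} gives $x\sim y$, so by \ref{fla6} there is a weed containing
$x$ and $y$; by \ref{fla32}, $\Gamma$ is not flat.
\end{proof}

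The main obstacle I anticipate is part (ii): unlike the line case, two distinct lines $x,y\in\Gamma_a$ can be identified by $\pi$ precisely when $x\approx y$, and this happens exactly in the presence of a weed through $a$. The delicate point is relating the \emph{local} failure of injectivity at a single point $a$ to the \emph{global} flatness condition, which is handled by invoking \ref{fla7} (to pass from $\approx$ back to $\sim$ at a fixed point, where ${\rm dist}(x,y)=2$) and then \ref{fla32} (to convert the existence of a weed into non-flatness). The forward direction is the easier one, since a weed immediately produces two distinct lines in a common $\Gamma_b$ with the same class; the reverse direction is where \ref{fla7} is essential, because without it one could only conclude $x\approx y$ rather than the single-step relation $x\sim y$ needed to exhibit an actual weed.
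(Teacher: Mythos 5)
Your proof is correct and follows essentially the same route as the paper: surjectivity from the lifting property (\ref{fla15}/\ref{fla20}), injectivity on $\Gamma_x$ from the fact that two distinct points at distance~$2$ cannot be $\simeq$-equivalent, and part (ii) from \ref{fla7} combined with \ref{fla32}. The only cosmetic difference is that for (i) the paper deduces injectivity directly from \ref{fla20}(iii) (i.e.\ from the observation that ${\rm dist}(a,b)=2$ forces $[a]\ne[b]$), whereas you detour through $a^{\rm op}=b^{\rm op}$ and the argument of \ref{fla5x}; both are valid.
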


\begin{proof}
By \ref{fla12}, $\pi(\Gamma_x)\subset\bar\Gamma_{[x]}$ and 
$\pi(\Gamma_a)\subset\bar\Gamma_{[a]}$ for all $x\in L$ and all $a\in P$. 
By \ref{fla20}(ii), both of these inclusions are, in fact, equalities and
by \ref{fla20}(iii), therefore, (i) holds.
Let $a\in P$ and suppose that $[y]=[z]$ for some $y,z\in\Gamma_a$. 
By \ref{fla7}, either $y\sim z$ or $y=z$. 
It follows that the restriction of $\pi$ to $\Gamma_{[a]}$ is injective
if and only if there no weeds whose middle point is $a$. Hence the restriction
of $\pi$ to $\Gamma_{[a]}$ is injective for all $a\in P$ if and only if $\Gamma$ contains
no weeds. Thus (ii) holds by \ref{fla32}.
\end{proof}

\section{Polar spaces}

In this section we describe the connection between polar spaces and Veldkamp quadrangles.
The main results are \ref{hau12} and \ref{hau13}. The proofs require
a few basic properties of polar spaces for which we will cite results from \cite[Chapter~7]{BC}.

\begin{definition}\label{hau1}
A {\it line space} is a pair $S=(P,L)$ consisting of a set $P$ of points and a subset $L$
of $2^P$ such that $|x|\ge2$ for all $x\in L$.
The elements of $L$ are called {\it lines}.
A line $x$ is {\it thick} if $|x|\ge3$ and $S$ is called {\it thick} if all its lines are thick. 
Two points are called {\it collinear} if there is an element of $L$ that contains them both 
\end{definition}

\begin{notation}\label{hau1x}
Let $S=(P,L)$ be a line space. The associated biparite graph $\Gamma=\Gamma_S$ is
the graph with vertex set $P\cup L$ and edge set consisting of all pairs $\{a,x\}$
such that $a\in P$, $x\in L$ and $a\in x$.
\end{notation}

\begin{notation}\label{hau2}
Let $S=(P,L)$ be a line space. For each $a\in L$, 
we denote by $a^\perp$ the set of all points collinear with $a$ (including $a$ itself)
and we set 
$$X^\perp=\bigcap_{a\in X}a^\perp$$
for all subsets $X$ of $P$. A {\it subspace} is a subset $X$ of $P$ such that for all
$x\in L$, either $x\subset X$ or $|x\cap X|\le1$. We will often identify a subspace
$X$ with the line space
$$S_X:=(X,\{x\in L\mid x\subset X\}).$$
For all subsets $X$ of $P$, let $\langle X\rangle$ denote the intersection of all
the subspaces that contain $X$. This intersection is itself a subspace. It is called the subspace
{\it generated by $X$}. 
\end{notation}

\begin{notation}\label{hau3}
A {\it linear space} is a line space such that for every two points there is a unique
line containing them both. A {\it partially linear space} is a line space such that for 
every two points there is at most one line containing them both. If 
$a,b$ are distinct collinear points in a partially linear space, then the unique line containing them 
both is denoted by $ab$. 
\end{notation}

\begin{definition}\label{hau4}
A {\it projective plane} is a linear space $S=(P,L)$ such that $|L|\ge2$ and 
$x\cap y\ne\emptyset$ for all $x,y\in P$. A {\it projective space} is a linear space
such that every subspace generated by the union of two lines with non-empty intersection is a
projective plane.
\end{definition}

\begin{definition}\label{hau5}
Let $S=(P,L)$ be a projective space. The {\it dimension} of $S$ is the maximal
length~$k$ of a properly ascending chain of subspaces $X_i$ such that
$$\emptyset\ne X_1\subset X_2\subset\cdots\subset X_k=P.$$
It is denoted by ${\rm dim}(S)$. Thus, in particular, lines are 
subspaces of dimension~$1$ and ${\rm dim}(S)=\infty$ if there no bound on the 
length of such chains. A non-singular subspace of dimension~$2$ is called
a non-singular {\it plane}.
\end{definition}

\begin{definition}\label{hau7}
A {\it polar space} is a line space $S=(P,L)$ such that the following holds:

\medskip\noindent
(BS) For all $a\in P$ and $x\in L$, either $|a^\perp\cap x|=1$ or $x\subset a^\perp$.

\medskip\noindent
Note that every subspace of a polar space is itself a polar space.
\end{definition}

\begin{definition}\label{hau8}
Let $S=(P,L)$ be a line space. A subspace $X$ of $S$ is called {\it singular} if
every two points of $X$ are collinear. Thus lines are, in particular, singular
subpaces.
\end{definition}

\begin{definition}\label{hau9}
A polar space $S=(P,L)$ is {\it non-degenerate} if $P\ne a^\perp$ for all $a\in P$.
\end{definition}

\begin{proposition}\label{hau11}
Let $S$ be a non-degenerate polar space. Then the following hold:
\begin{enumerate}[\rm(i)]
\item $S$ is a partially linear space.
\item Singular subspaces are projective spaces.
\end{enumerate}
\end{proposition}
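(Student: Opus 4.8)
The plan is to prove the two assertions of \ref{hau11} separately, using only the defining axiom (BS) together with non-degeneracy. First I would establish (i), that a non-degenerate polar space is partially linear. Suppose toward a contradiction that two distinct points $a,b$ lie on two distinct lines $x$ and $y$. Both $x$ and $y$ contain $a$ and $b$, so in particular $a\in y$ and $b\in y$; since $x\ne y$, choose a point $c\in x$ with $c\notin y$ (such a point exists because a line has at least two points and $x\ne y$ forces $x\not\subset y$). Now apply (BS) to the point $c$ and the line $y$: we have $a,b\in c^\perp\cap y$ (since $a,b\in x$ are collinear with $c$ via $x$), so $|c^\perp\cap y|\ge2$, which forces $y\subset c^\perp$. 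This means $c$ is collinear with every point of $y$. Repeating this argument for every $c\in x\setminus y$ should show that the subspace generated by $x$ and $y$ is singular; the obstruction will be to extract from this an actual contradiction with non-degeneracy, since partial linearity need not fail just because a pair of points is doubly joined. The cleaner route is likely to derive a contradiction directly: once $y\subset c^\perp$ for the chosen $c$, consider a point $d$ witnessing non-degeneracy relative to some point of $y$ and play (BS) against it, or invoke a dimension/exchange argument to produce two points with incompatible collinearity.

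For part (ii), that singular subspaces are projective spaces, I would argue as follows. By \ref{hau7}, every subspace of a polar space is itself a polar space, and a singular subspace $X$ is by \ref{hau8} one in which all points are pairwise collinear. Using (i), which gives partial linearity, $X$ is a partially linear space; combined with the singularity condition (every two points collinear, hence joined by a unique line), $X$ is in fact a \emph{linear} space. To upgrade this to a projective space via \ref{hau4}, I must verify the defining condition: every subspace generated by the union of two coplanar lines is a projective plane, equivalently that the Veblen--Young (exchange) axiom holds — if a line meets two sides of a triangle, it meets the third. The key step is to verify this axiom inside $X$ using (BS). Given a line $\ell$ and a point $p\notin\ell$ in $X$, all of $\ell\cup\{p\}$ lies in the singular subspace, so $\ell\subset p^\perp$; this is exactly the degenerate alternative of (BS) applied to $p$ and $\ell$, and it should furnish the needed incidences to run the Veblen--Young verification.

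The hard part, I expect, will be part (i): getting a genuine contradiction rather than merely establishing that the doubly-joined configuration sits inside a singular subspace. The axiom (BS) is local and symmetric, so the natural deductions tend to propagate collinearity (the $y\subset c^\perp$ conclusion) rather than immediately obstruct it, and one must use non-degeneracy in an essential way to break the symmetry. I anticipate the argument will hinge on choosing a point $e$ not collinear with some point of the configuration (available by non-degeneracy) and applying (BS) to $e$ against the two lines $x$ and $y$ to force $|e^\perp\cap x|=1=|e^\perp\cap y|$, then checking that the two singletons cannot both be consistent with $a,b\in x\cap y$. Part (ii) should then be comparatively routine once (i) is in hand, reducing to a standard exchange-axiom check that (BS) makes available.
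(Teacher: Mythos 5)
The paper does not actually prove this proposition: it disposes of it in one line by citing \cite[Thm.~7.4.11]{BC} for (i) and \cite[Thm.~7.4.13(iv)]{BC} for (ii). So the question is whether your from-scratch argument closes, and it does not. For (i), your opening move is fine: if $a\ne b$ lie on distinct lines $x,y$ and $c\in x\setminus y$, then $a,b\in c^\perp\cap y$ forces $y\subset c^\perp$ by (BS). But you concede you have no contradiction at that point, and the escape route you sketch at the end fails: taking a point $e$ not collinear with $a$ (which non-degeneracy provides) and applying (BS) gives $e^\perp\cap x=\{e_1\}$ and $e^\perp\cap y=\{e_2\}$ with $e_1,e_2\ne a$, but nothing prevents $e_1=e_2=b$, so the two singletons are perfectly compatible with $a,b\in x\cap y$ and no contradiction results. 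A working argument needs more: one first shows $x^\perp=y^\perp$ (any $p$ collinear with both $a$ and $b$ satisfies $y\subset p^\perp$ by (BS), and symmetrically for $x$), and then invokes a genuine non-degeneracy lemma of the type \cite[7.4.8(iii)]{BC} (for a point $c$ on a line $z$ there exists a point in $c^\perp\setminus z^\perp$) to separate the two lines. None of that is in your proposal, and you explicitly flag that you do not know how to finish.

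For (ii) the gap is worse. Inside a singular subspace every two points are collinear, so the alternative $\ell\subset p^\perp$ of (BS) holds automatically for every point $p$ and line $\ell$ of the subspace and therefore carries no information; it cannot ``furnish the needed incidences'' for the Veblen--Young axiom. Partial linearity plus pairwise collinearity gives you a linear space, but a linear space need not be a projective space, and the passage from ``singular subspace of a polar space'' to ``projective space'' is a substantial theorem (it is exactly \cite[Thm.~7.4.13]{BC}, going back to Buekenhout--Shult), not a routine exchange-axiom check. Your expectation that (ii) would be comparatively routine once (i) is in hand is the opposite of the truth. Given that the paper treats both statements as imported background, the honest options are either to cite the literature as the paper does or to reproduce the full argument; the proposal does neither.
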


\begin{proof}
(i) holds by \cite[Thm.~7.4.11]{BC} and (ii) by \cite[Thm.~7.4.13(iv)]{BC}.
\end{proof}

\begin{definition}\label{hau9x}
In light of \ref{hau11}(ii), we can define the
{\it rank} of a non-degenerate polar space $S=(P,L)$ to be one more than the supremum of the set
$$\{{\rm dim}(E)\mid\text{$E$ is a non-singular subspace}\}.$$
The rank is denoted by ${\rm rk}(S)$. Thus ${\rm rk}(S)=1$ if and only if $L=\emptyset$
but $P\ne\emptyset$,
and  ${\rm rk}(S)=2$ if and only if $L\ne\emptyset$ but lines are maximal singular
subspaces.
\end{definition}

\begin{remark}\label{hau505}
A polar space is called thick in this paper (see \ref{hau1}) if all its lines are thick. 
We note that the definition of a thick polar space in \cite[7.3]{spherical} is slightly different.
\end{remark}

\begin{proposition}\label{hau21}
Let $S=(P,L)$ be a thick non-degenerate polar space and let $a,b\in P$.
Then there exists a point that is collinear with neither $a$ nor $b$.
\end{proposition}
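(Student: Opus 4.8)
The plan is to argue by contradiction: suppose that $a^\perp\cup b^\perp=P$ and deduce a contradiction with non-degeneracy (\ref{hau9}). Write $A=a^\perp$ and $B=b^\perp$, and partition $P=X\sqcup Y\sqcup Z$ with $X=A\setminus B$, $Y=B\setminus A$ and $Z=A\cap B$. Non-degeneracy provides a point outside $A$ (which then lies in $B$) and a point outside $B$ (which then lies in $A$), so both $X$ and $Y$ are non-empty. Throughout I would exploit (BS) in the following form: for a point $p$ and a line $x$, the axiom forbids $|p^\perp\cap x|=0$, so $p$ is collinear with at least one point of every line. In particular (as the space has lines) every point lies on a line and any two points have a common neighbour, so the collinearity graph has diameter at most~$2$.

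First I would record the local structure forced by the covering assumption. If a line $\ell$ meets $Y$, then $\ell\not\subset A$, so (BS) applied to $a$ gives $|A\cap\ell|=1$; the remaining (at least two, by thickness) points of $\ell$ lie outside $A$, hence in $B$, so (BS) applied to $b$ forces $\ell\subset B$. Thus every line meeting $Y$ is contained in $B$ and meets $A$ in a single point of $Z$; symmetrically every line meeting $X$ lies in $A$ and meets $B$ in a single point of $Z$. Two consequences follow: no line meets both $X$ and $Y$, so no point of $X$ is collinear with a point of $Y$; and $p^\perp\subset A$ for every $p\in X$ while $q^\perp\subset B$ for every $q\in Y$ (any neighbour of $p\in X$ lies on a line through $p$, which meets $X$ and hence lies in $A$).

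Next I would isolate a saturated point. Choosing $x_0\in X$ and $y_0\in Y$, these are non-collinear, so they have a common neighbour $e_0$, and $e_0\in x_0^\perp\cap y_0^\perp\subset A\cap B=Z$. The line $y_0e_0$ meets $Y$, so its unique point of $A$ is $e_0$; since $x^\perp\subset A$ for each $x\in X$ and such an $x$ is collinear with some point of $y_0e_0$, that point must be $e_0$, giving $X\subset e_0^\perp$. The symmetric argument applied to the line $x_0e_0$ gives $Y\subset e_0^\perp$, so $X\cup Y\subset e_0^\perp$.

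The final step, which I expect to be the main obstacle, is to convert this into a contradiction — the delicate feature being that (BS) together with thickness is exactly strong enough to propagate collinearity from two points of a line to the entire line. By non-degeneracy $e_0^\perp\ne P$, so there is a point $g$ not collinear with $e_0$; since $X\cup Y\subset e_0^\perp$, necessarily $g\in Z$. Applying (BS) to $g$ and each of the lines $x_0e_0$ and $y_0e_0$ (neither of which can lie in $g^\perp$, else $g\sim e_0$) shows that $g$ is collinear with a point of $X$ and with a point of $Y$; rerunning the saturation argument of the previous paragraph with $g$ in place of $e_0$ then yields $X\cup Y\subset g^\perp$, in particular $X\subset g^\perp$. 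Finally, thickness furnishes a third point $x''$ on the line $x_0e_0$, which lies in $X$; then $x_0,x''\in X\subset g^\perp$ are two distinct points of this line, so (BS) forces $x_0e_0\subset g^\perp$ and hence $g\sim e_0$ — the desired contradiction. (In the degenerate case where the space has no lines one has $a^\perp=\{a\}$, and the statement reduces to the hypothesis that there are enough points, so I would treat it, if at all, separately.)
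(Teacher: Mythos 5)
Your proof is correct, but it takes a genuinely different route from the paper's. The paper argues directly, splitting into the cases where $a$ and $b$ are or are not collinear, and in each case constructs the desired point in a few steps using thickness together with the cited fact \cite[7.4.8(iii)]{BC} (for a point $c$ on a line $x$ there exists $d\in c^\perp\setminus x^\perp$). You instead argue by contradiction from the covering hypothesis $a^\perp\cup b^\perp=P$: the partition $P=X\sqcup Y\sqcup Z$, the observation that thickness plus (BS) forces any line meeting $Y$ to lie entirely in $b^\perp$ and meet $a^\perp$ in a single point of $Z$, the resulting saturated point $e_0\in Z$ with $X\cup Y\subset e_0^\perp$, and the final clash between non-degeneracy at $e_0$ and (BS) applied to the thick line $x_0e_0$ (which carries two points of $X\subset g^\perp$, forcing $e_0\in g^\perp$) are all sound; I checked each step. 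What the two approaches buy: the paper's proof is shorter but outsources the real work to \cite{BC}, while yours is longer but entirely self-contained from (BS), thickness and the non-degeneracy condition of \ref{hau9}, in effect reproving the needed portion of \cite[7.4.8]{BC} along the way, and it treats the collinear and non-collinear configurations uniformly. One caveat you rightly flag yourself: when $L=\emptyset$ your argument does not apply (nor does the paper's, which needs lines through $a$ and $b$), and a two-point space with no lines satisfies all the stated hypotheses while admitting no third point; this is harmless, since every use of \ref{hau21} in the paper occurs with $L\ne\emptyset$, but it is worth saying explicitly that this hypothesis is in force.
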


\begin{proof}
Suppose first that $a$ and $b$ are collinear and let $x=ab$. Since $S$ is thick,
we can choose $c\in x$ distinct from $a$ and $b$. By \cite[7.4.8(iii)]{BC},
there exists $d\in c^\perp\backslash x^\perp$. This point $d$ is collinear with 
neither $a$ nor $b$. Now suppose that $a$ and $b$ are not collinear
Let $x$ be a line containing $b$ and let $c$ be a point on $x$ distinct from $b$.
By (BS), $a$ is collinear with exactly one point on $x$. Since $S$ is thick,
we can assume, therefore, that $a$ and $c$ are not collinear. By another application 
of \cite[7.4.8(iii)]{BC}, there exists $d\in c^\perp\backslash x^\perp$. Let $y=cd$.
By (BS), there is a unique point $e$ in $y\cap a^\perp$. Since $S$ is thick,
we can choose $f$ on $y$ distinct from $c$ and $e$.  Thus $f$ is not collinear with $a$,
nor, by the choice of $b$, is it collinear with $b$.
\end{proof}

\begin{proposition}\label{hau22}
Let $S=(P,L)$ be a thick non-degenerate polar space and let $a$ and $b$
be two non-collinear points. Then $a^\perp\cap b^\perp$ is 
a thick non-degenerate polar space.
\end{proposition}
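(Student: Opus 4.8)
The plan is to verify that $S':=a^\perp\cap b^\perp$, viewed as a subspace via the line space $S_{S'}$ of \ref{hau2}, satisfies the three requirements: it is a subspace (hence inherits the polar axiom (BS) and thickness), it is non-degenerate, and its lines are genuinely thick. First I would observe that $a^\perp\cap b^\perp$ is an intersection of two subspaces of $S$ (each $c^\perp$ is readily checked to be a subspace using (BS)), hence is itself a subspace; by the note in \ref{hau7}, every subspace of a polar space is again a polar space, so (BS) holds automatically for $S'$.

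Next I would address thickness. A line $y$ of $S'$ is a line $y\in L$ with $y\subset a^\perp\cap b^\perp$; since $y$ is thick in $S$ (as $S$ is thick) and lies entirely in $S'$, it remains thick as a line of $S'$. The content is therefore that $S'$ has \emph{some} lines and that it is non-degenerate, i.e. that for every point $c\in S'$ there is a point of $S'$ not collinear with $c$. For non-degeneracy, fix $c\in a^\perp\cap b^\perp$. I would use \ref{hau21} applied to the ambient space $S$ to find a point $d$ collinear with neither $a$ nor $b$; but this is not yet enough, since I need $d\in S'$ \emph{and} $d\not\in c^\perp$. The right tool is \cite[7.4.8(iii)]{BC} (used already in \ref{hau21}): it produces, for a point $c$, a point $d\in c^\perp$ avoiding a prescribed $x^\perp$, and more generally controls how the perp of a point meets the perps of others. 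I expect to argue inside the polar space $a^\perp$ (itself a non-degenerate polar space on which $b^\perp$ cuts out $S'$) that $c$ has a non-collinear partner within $S'$.

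The main obstacle, and the step I would grind on most carefully, is precisely this non-degeneracy argument: starting from a point $c\in S'$, I must manufacture a point $e\in S'$ with $e\notin c^\perp$, while simultaneously keeping $e$ collinear with both $a$ and $b$. The natural strategy is to take a line $x$ through $c$ inside $S'$ (whose existence must be arranged), apply (BS) to relate $a^\perp$, $b^\perp$ and $c^\perp$ on suitable lines, and peel off a point using thickness exactly as in the proof of \ref{hau21}: on an auxiliary line one has at most one point in a given perp, so thickness leaves a point outside it. I anticipate needing to invoke \cite[7.4.8(iii)]{BC} at least once to guarantee a point of $c^\perp$ escaping the relevant singular subspace, and then to trim it back into $a^\perp\cap b^\perp$ using (BS) and thickness, much as $f$ was extracted in \ref{hau21}.

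Once non-degeneracy is in hand, the proposition is complete: $S'$ is a polar space by inheritance, it is thick because its lines are lines of the thick space $S$, and it is non-degenerate by the argument above. I would close by remarking that the hypothesis that $a$ and $b$ are non-collinear is what ensures $S'$ is strictly smaller and genuinely two-dimensional enough to contain lines, so that the statement is not vacuous; the collinear case is deliberately excluded because there $a^\perp\cap b^\perp$ can degenerate.
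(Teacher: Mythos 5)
Your handling of the subspace, polar-space, and thickness parts is correct and matches the paper: $a^\perp$ and $b^\perp$ are subspaces by (BS), their intersection is a subspace, every subspace of a polar space is a polar space, and lines lying inside a thick space remain thick. The problem is the non-degeneracy step, which you yourself identify as ``the main obstacle'' and then never actually carry out. Everything from ``The right tool is \cite[7.4.8(iii)]{BC}'' onward is a description of a strategy (``I expect to argue\dots'', ``I anticipate needing\dots'', ``much as $f$ was extracted in \ref{hau21}'') rather than an argument: given $c\in a^\perp\cap b^\perp$, you never produce a point $e\in a^\perp\cap b^\perp$ with $e\notin c^\perp$, and it is not at all obvious how to ``trim'' a point back into $a^\perp\cap b^\perp$ while keeping it outside $c^\perp$ using only (BS) and thickness --- that is precisely where the work lies. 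So as written the proof has a genuine gap at its central claim.

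For comparison, the paper disposes of non-degeneracy with a single citation to \cite[7.4.8(ii)]{BC}, which asserts exactly that for two non-collinear points $a,b$ of a non-degenerate polar space the subspace $a^\perp\cap b^\perp$ is again non-degenerate; part (iii) of that theorem (the one you reach for, and the one used in \ref{hau21}) is a different statement about finding a point of $c^\perp$ outside a prescribed $x^\perp$, and is not by itself sufficient here without the additional construction you sketch but do not supply. Either quote the correct external result, or actually complete the hands-on argument you outline.
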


\begin{proof}
Let $S_1=a^\perp\cap b^\perp$. If $x$ is a line containing two points $d,e$ of $S_1$,
then by (BS), $x\subset S_1$. Thus $S_1$ is a thick subspace. In particular, 
$S_1$ is a polar space. By \cite[7.4.8(ii)]{BC}, it is non-degenerate.
\end{proof}

\begin{proposition}\label{hau22x}
Let $S=(P,L)$ be a thick non-degenerate polar space and let $x\in L$.
Then there exists $y\in L$ such that $x^\perp\cap y^\perp$ is 
a thick non-degenerate polar space and $x\cap y^\perp=\emptyset$.
\end{proposition}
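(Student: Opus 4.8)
The plan is to produce a line $y$ with the two stated properties by starting from the thick line $x$, picking a point off $x^\perp$ (which exists by non-degeneracy), and then building $y$ through that point while controlling its relationship with $x$. First I would use non-degeneracy of $S$ together with \ref{hau9} to find a point $p\in P$ that is not in $x^\perp$, i.e. a point collinear with at most one point of $x$; by (BS) applied to $p$ and $x$, exactly one point of $x$ lies in $p^\perp$, call it $a$, so $p$ is collinear with $a$ but with no other point of $x$. This gives me a candidate line $y_0=pa$ (using \ref{hau11}(i) to get a well-defined line through the two distinct collinear points $p,a$), but $y_0$ meets $x$ in the point $a$, so it does not yet satisfy $x\cap y^\perp=\emptyset$; the job is to push $y$ off $x$ entirely while keeping the intersection $x^\perp\cap y^\perp$ large enough to be a thick non-degenerate polar space.

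The key move I expect to need is to replace $a$ by a point of $y_0$ distinct from both $a$ and $p$, using thickness of $S$, and then argue that a suitable line through such an interior point of $y_0$ avoids $x$ altogether. More precisely, I would aim to choose $y$ so that every point of $x$ is collinear with exactly one point of $y$ (the generic ``opposite line'' situation), which is exactly the condition that makes $x^\perp\cap y^\perp$ behave well. The condition $x\cap y^\perp=\emptyset$ says no point of $x$ is collinear with all of $y$, equivalently no point of $x$ lies on a common singular plane with $y$; I would enforce this by choosing $y$ through a point $q$ with $q\notin x^\perp$ and arranging, via (BS) and thickness, that the unique point of $x$ in $q^\perp$ fails to be collinear with the rest of $y$. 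The reference \cite[7.4.8]{BC} (already invoked in \ref{hau21} and \ref{hau22}) is the natural tool to guarantee the existence of points outside a given perp while staying inside a chosen neighborhood, and I would lean on it to produce the second point of $y$.

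Once $y$ is constructed with $x\cap y^\perp=\emptyset$, I would verify that $x^\perp\cap y^\perp$ is a thick non-degenerate polar space essentially as in the proof of \ref{hau22}: the set $x^\perp\cap y^\perp$ is a subspace because, by (BS), any line meeting it in two points lies inside it, so it is itself a polar space by the remark in \ref{hau7}; thickness is inherited because $S$ is thick and any line contained in a subspace keeps all its points; and non-degeneracy should again follow from \cite[7.4.8]{BC}, which is precisely the clause that supplied non-degeneracy of $a^\perp\cap b^\perp$ for non-collinear points $a,b$ in \ref{hau22}. The analogy with \ref{hau22} is strong enough that the real content is the line-version of the genericity argument rather than the polar-space axiom-checking at the end.

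The hard part will be the existence argument for $y$ itself, specifically simultaneously securing the two conditions $x\cap y^\perp=\emptyset$ and the non-degeneracy/thickness of $x^\perp\cap y^\perp$. These pull in slightly opposite directions: making $y$ ``far'' from $x$ (so that $x\cap y^\perp=\emptyset$) risks shrinking $x^\perp\cap y^\perp$, while keeping $x^\perp\cap y^\perp$ large and non-degenerate wants $y$ to interact richly with $x^\perp$. The delicate step is therefore to choose the two points spanning $y$ generically enough—using thickness of $x$ to pick one point and \cite[7.4.8(iii)]{BC} to pick a second point outside the relevant perp—so that both requirements hold at once. I expect the clean formulation is to first reduce to the statement that there is a point $q$ with $|q^\perp\cap x|=1$ and then to run a second application of \cite[7.4.8(iii)]{BC} inside $q^\perp$, exactly mirroring the two-step selection of $d$ and $f$ in the proof of \ref{hau21}.
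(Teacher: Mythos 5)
Your overall strategy is the paper's: use non-degeneracy to find a point $e$ off the perp of a point of $x$, let $b$ be the unique point of $x$ collinear with $e$, build $y$ through $e$ and a second suitably chosen point, and then verify the two required properties via (BS) and the two-point perp result \ref{hau22} (i.e.\ \cite[7.4.8]{BC}). But the proposal stops exactly where the content is. You never actually specify the second point of $y$, and your suggestion to find it by ``a second application of \cite[7.4.8(iii)]{BC} inside $q^\perp$'' is off target: $q^\perp$ is a \emph{degenerate} polar space ($q$ is collinear with everything in it), so it is not the right ambient space in which to rerun the machinery. More importantly, the claim that non-degeneracy of $x^\perp\cap y^\perp$ ``should again follow'' as in \ref{hau22} does not close the argument: $x^\perp\cap y^\perp$ is an intersection of \emph{four} perps ($x^\perp=a^\perp\cap b^\perp$ and $y^\perp=d^\perp\cap e^\perp$ by (BS)), whereas \ref{hau22} only handles the perp of two non-collinear points. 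A single invocation of that lemma, or of \cite[7.4.8]{BC}, does not give non-degeneracy of the fourfold intersection; this is the genuine gap.

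The paper resolves it by choosing the four points so that they split into two non-collinear pairs, the second pair living inside the perp of the first: take $a\in x$, $e\notin a^\perp$, let $b$ be the unique point of $x$ in $e^\perp$, set $S_1=a^\perp\cap e^\perp$ (thick non-degenerate by \ref{hau22}), note $b\in S_1$, and choose $d\in S_1$ \emph{not} collinear with $b$ using non-degeneracy of $S_1$. Then $y=de$ and $x^\perp\cap y^\perp=S_1\cap b^\perp\cap d^\perp$ is thick and non-degenerate by a second application of \ref{hau22} \emph{inside $S_1$} to the non-collinear pair $b,d$; and $x\cap y^\perp=\emptyset$ because any $c\in x\cap y^\perp$ would be a second point of $x$ in $e^\perp$ besides $b$, forcing $x\subset e^\perp$ and hence $a\in e^\perp$ by (BS), a contradiction. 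The crucial idea your sketch is missing is precisely that the second point $d$ of $y$ must be taken inside $a^\perp\cap e^\perp$ (collinear with the point $a$ of $x$ that is \emph{not} collinear with $e$), so that the fourfold perp factors as two successive two-point perps. Your looser recipe (any $r\in q^\perp$ off the perp of the distinguished point of $x$) can in fact be repaired --- by (BS) such an $r$ is collinear with a unique point $c$ of $x$, and one can then run the paper's argument with the pair $(c,q)$ in place of $(a,e)$ --- but that observation, or something equivalent, has to be made explicit; as written the proposal does not establish non-degeneracy of $x^\perp\cap y^\perp$.
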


\begin{proof}
Let $a\in x$. Since $S$ is non-degenerate, we can choose $e\in P\backslash a^\perp$
and by (BS), we can choose $b\in x$ collinear with $e$. 
Let $S_1=a^\perp\cap e^\perp$. 
By \ref{hau22}, $S_1$ is a thick non-degenerate polar space. In particular, we
can choose $d\in S_1$ not collinear with $b$. Let $y=de$ and let 
$S_2=b^\perp\cap d^\perp\cap S_1$. Thus
$$S_2=a^\perp\cap b^\perp\cap d^\perp\cap e^\perp.$$
By (BS), $a^\perp\cap b^\perp=x^\perp$ and
$d^\perp\cap e^\perp=y^\perp$. Hence $S_2=x^\perp\cap y^\perp$.
By a second application of \ref{hau22},
$S_2$ is a thick non-degenerate polar space. 

Now suppose that $c\in x\cap y^\perp$. Then $b,c\in e^\perp$ as well as $c\in d^\perp$, so $c\ne b$.
By (BS), therefore, $a\in e^\perp$. This contradicts the
choice of $e$. We conclude that $x\cap y^\perp=\emptyset$.
\end{proof}

\begin{proposition}\label{hau22y}
Let $x,y\in L$ be as in {\rm\ref{hau22x}}, let $S_2=x^\perp\cap y^\perp$
and let $E$ be a singular plane containing $x$.
Then $|E\cap S_2|=1$.
\end{proposition}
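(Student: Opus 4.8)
The plan is to prove that a singular plane $E$ containing the line $x$ meets the polar space $S_2 = x^\perp \cap y^\perp$ in exactly one point. Since $E$ is a singular subspace containing the line $x$, every point of $E$ is collinear with every point of $x$, so $E \subset x^\perp$. Thus $E \cap S_2 = E \cap y^\perp$, and the task reduces to showing that $E \cap y^\perp$ is a single point. Because $E$ is a singular plane (a projective plane by \ref{hau11}(ii) together with the fact that $E$ is a subspace of dimension~$2$), I would like to apply the polar space axiom (BS) to the point $y$ (or rather to a point of $y$) and the lines of $E$ in order to pin down the intersection with $y^\perp$.

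First I would establish that $E \cap y^\perp$ is non-empty. This should follow from a counting or incidence argument inside the projective plane $E$: pick any point $d \in y$; by (BS) applied to $d$ and each line of $E$, the set $d^\perp \cap E$ is either all of a line or meets each line in exactly one point, so $d^\perp \cap E$ is a subspace of the projective plane $E$, hence a point, a line, or all of $E$. Intersecting over the (at least two) points of $y$ — using thickness of $y$, which we may invoke since $y$ was produced by \ref{hau22x} inside a thick polar space — I would argue that $y^\perp \cap E = \bigcap_{d \in y} (d^\perp \cap E)$ is itself a subspace of $E$. The key constraint preventing this subspace from being too large comes from the hypothesis $x \cap y^\perp = \emptyset$ in \ref{hau22x}: since $x \subset E$ but no point of $x$ lies in $y^\perp$, the subspace $y^\perp \cap E$ cannot contain the line $x$, and in a projective plane a proper subspace that misses a given line can meet it in at most a point's worth of structure.

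Next I would rule out that $y^\perp \cap E$ is empty and that it is a whole line. For non-emptiness, I expect to use (BS) directly: given any line $\ell$ of $E$ and a point $d$ of $y$, either $\ell \subset d^\perp$ or $|\ell \cap d^\perp| = 1$; running this over a suitable line $\ell$ of $E$ and exploiting the projective-plane property that any two lines of $E$ meet, I would force the existence of a point collinear with a full set of generators of $y$, giving a point of $y^\perp \cap E$. To exclude $y^\perp \cap E$ being a line $m$, note that $m$ and $x$ are two lines of the projective plane $E$, so they meet in a point $p \in x$; but then $p \in x \cap y^\perp$, contradicting $x \cap y^\perp = \emptyset$. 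This is precisely where the defining property of $y$ from \ref{hau22x} is essential.

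The main obstacle I anticipate is the careful bookkeeping in showing $y^\perp \cap E$ is a \emph{subspace} of the projective plane $E$ and is non-empty — that is, making the application of (BS) uniform over all points of $y$ rather than a single point. The axiom (BS) is stated for a single point $a$ and a single line, so one must check that intersecting the sets $d^\perp \cap E$ over $d \in y$ behaves well; here I would lean on \ref{hau11}(i) (partial linearity) and on the fact that $S_2 = x^\perp \cap y^\perp$ is already known to be a thick non-degenerate polar space by \ref{hau22x}, which should keep the intersection from degenerating. Once $y^\perp \cap E$ is known to be a non-empty proper subspace of $E$ that avoids the line $x$, the projective-plane dichotomy forces it to be a single point, completing the proof.
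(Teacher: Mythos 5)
Your proposal is correct and follows essentially the same route as the paper's proof: reduce to $E\cap y^\perp$ via $E\subset x^\perp$, show $d^\perp\cap E$ contains a line for each point $d$ of $y$, intersect two such lines inside the projective plane $E$ to get non-emptiness, and use $x\cap y^\perp=\emptyset$ together with the observation that two points of $E\cap S_2$ would span a line of $E$ meeting $x$ to get uniqueness. The one obstacle you flag---making the argument uniform over all points of $y$---dissolves immediately by applying (BS) to a candidate point $p$ and the line $y$: if $p$ is collinear with two distinct points $d,e\in y$ then $y\subset p^\perp$, so $y^\perp=d^\perp\cap e^\perp$ and only two points of $y$ are ever needed.
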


\begin{proof}
By \ref{hau4} and \ref{hau11}(ii), $E$ is a projective plane. Thus
every two lines contained in $E$ intersect in a point. In particular, $E\subset x^\perp$.
Let $d\in y$. By (BS), $d^\perp$ contains a point on each line of $E$.
Hence $|d^\perp\cap E|\ge2$. It follows that $E\cap d^\perp$
contains a line. Now choose a second point $e$ in $y$. Then $E\cap e^\perp$
also contains a line. Therefore $E\cap d^\perp\cap e^\perp\ne\emptyset$.
By (BS), we have $d^\perp\cap e^\perp=y^\perp$. Thus $E\cap S_2=E\cap y^\perp\ne\emptyset$.

Suppose that $a,b$ are two points in $E\cap S_2$. Then $a$ and $b$ are collinear. 
Let $z=ab$. Then $z\subset E\cap S_2$. It follows that $z$ and $x$ intersect in a point
that lies in $x\cap S_2\subset x\cap y^\perp$. By \ref{hau22x}, however, $x\cap y^\perp=\emptyset$.
With this contradiction, we conclude that $|E\cap S_2|\le1$.
\end{proof}

\begin{proposition}\label{hau22z}
Let $x\in L$ and let $M$ be the set of singular planes containing $x$.
Then for all $E,F\in M$ there exists an element $G$ of $M$ such that both
$\langle E\cup G\rangle$ and $\langle F\cup G\rangle$ are non-singular.
\end{proposition}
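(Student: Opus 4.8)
The plan is to transfer the problem into the smaller polar space $S_2=x^\perp\cap y^\perp$ furnished by \ref{hau22x} and to exploit the fact, recorded in \ref{hau22y}, that every singular plane through $x$ meets $S_2$ in exactly one point. Concretely, I first fix $y\in L$ as in \ref{hau22x}, so that $S_2:=x^\perp\cap y^\perp$ is a thick non-degenerate polar space and $x\cap y^\perp=\emptyset$. By \ref{hau22y}, each of $E\cap S_2$ and $F\cap S_2$ is a single point; call these $p_E$ and $p_F$, so that $p_E,p_F$ are points of $S_2$. Applying \ref{hau21} to the thick non-degenerate polar space $S_2$ and the pair $p_E,p_F$, I obtain a point $q\in S_2$ that is collinear in $S_2$ with neither $p_E$ nor $p_F$. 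Since $S_2$ is a subspace of $S$, a line joining two of its points lies inside $S_2$, so collinearity inside $S_2$ agrees with collinearity in $S$; in particular $q$ is distinct from and non-collinear with both $p_E$ and $p_F$ in $S$.

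Next I would build the required plane from $q$. Because $q\in S_2\subseteq x^\perp$, the point $q$ is collinear with every point of $x$, while $q\in y^\perp$ together with $x\cap y^\perp=\emptyset$ forces $q\notin x$. Thus $x\cup\{q\}$ is a set of pairwise collinear points properly containing the line $x$, and I set $G:=\langle x\cup\{q\}\rangle$. Using the standard polar-space fact (as in \cite{BC}) that a point collinear with an entire line spans a singular subspace with it, $G$ is singular; by \ref{hau11}(ii) it is then a projective space, and as it is generated by a line together with a point off that line it is a projective plane, i.e.\ a singular plane containing $x$, so $G\in M$. As a consistency check, $q\in G\cap S_2$, and \ref{hau22y} gives $G\cap S_2=\{q\}$.

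It then remains only to verify non-singularity. The points $p_E\in E\subseteq\langle E\cup G\rangle$ and $q\in G\subseteq\langle E\cup G\rangle$ are distinct and non-collinear, so $\langle E\cup G\rangle$ contains two non-collinear points and hence is not singular; the identical argument with $p_F$ in place of $p_E$ shows that $\langle F\cup G\rangle$ is not singular. Therefore $G$ is the desired common element of $M$.

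The step I expect to be the main obstacle is the construction in the second paragraph, namely checking that $G=\langle x\cup\{q\}\rangle$ is genuinely a singular plane: one needs that the span of a line and a collinear external point in a non-degenerate polar space is singular, so that \ref{hau11}(ii) and the projective-plane description of \ref{hau4} apply, and that this span has dimension exactly $2$. Everything else reduces to bookkeeping with \ref{hau22x}, \ref{hau22y} and \ref{hau21}, together with the observation that non-collinearity of one point of $E$ with one point of $G$ already destroys singularity of $\langle E\cup G\rangle$.
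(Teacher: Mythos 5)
Your proof is correct and follows essentially the same route as the paper's: pass to $S_2=x^\perp\cap y^\perp$ via \ref{hau22x}, locate $E$ and $F$ in $S_2$ via \ref{hau22y}, pick a point of $S_2$ non-collinear with both via \ref{hau21}, and span it with $x$ to get $G$ (singular by \cite[7.4.7(v)]{BC}, which is exactly the fact the paper invokes at the step you flagged as the main obstacle). The only addition is your explicit check that collinearity in $S_2$ agrees with collinearity in $S$, which the paper leaves implicit.
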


\begin{proof}
Let $y\in L$ be as in {\rm\ref{hau22x}} and let $S_2=x^\perp\cap y^\perp$.
Thus $S_2$ is a thick non-degenerate polar space and $x\cap y^\perp=\emptyset$.
Choose $E,F\in M$. By \ref{hau22y}, there exist points $a,b$ such that
$E\cap S_2=\{a\}$ and $F\cap S_2=\{b\}$.
By \ref{hau21}, there exists $c\in S_2$ that is collinear with neither $a$ nor $b$.
Let $G=\langle x\cup c\rangle$. Since $x\cap y^\perp=\emptyset$, we have 
$c\not\in x$. On the other hand, $c\in x^\perp$ so by \cite[7.4.7(v)]{BC}, 
$G$ is singular. Thus $G\in M$.
By the choice of $c$, neither $\langle E\cup G\rangle$ nor $\langle F\cup G\rangle$ is
singular.
\end{proof}

\begin{proposition}\label{hau12}
Let $S=(P,L)$ be a thick, non-degenerate polar space such that $L\ne\emptyset$
and let $\Gamma=\Gamma_S$ be the associated bipartite graph as defined in 
{\rm\ref{hau1x}}. Let $\Gamma$
be endowed with the following local opposition relations:
\begin{enumerate}[\rm(a)]
\item For each $a\in P$ and all $x,y\in\Gamma_a$, $x$ is opposite $y$ at $a$ 
if the subspace $\langle x\cup y\rangle$ is non-singular.
\item For each $x\in L$ and all $a,b\in\Gamma_x$, $a$ is opposite $b$ at $x$ 
if $a\ne b$. 
\end{enumerate}
Then $\Gamma$ is a flat green Veldkamp quadrangle. 
It is a generalized quadrangle if and only if ${\rm rk}(S)=2$.
\end{proposition}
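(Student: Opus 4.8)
The plan is to verify, one by one, the requirements of \ref{toy5}, \ref{toy6} and \ref{toy10} for $\Gamma$, translating each local condition into a statement about singular subspaces of $S$, and then to deduce the generalized quadrangle criterion from \ref{toy34}. Write $a\sim b$ when two points are collinear. The dictionary I rely on throughout comes from \ref{hau11} and the axiom (BS): for two distinct lines $x,y$ meeting in a point, the plane $\langle x\cup y\rangle$ is singular if and only if every point of $x$ is collinear with every point of $y$; and for a point $a$ collinear with every point of a line $z$, the plane $\langle a\cup z\rangle$ is singular. In particular a line spans only itself, so the relation in (a) is symmetric and anti-reflexive, and together with (b) it equips $\Gamma$ with a local opposition relation at every vertex; since (b) is the trivial relation, $\Gamma$ is green. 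I also record the consequence, used repeatedly below, that if $(a,x,b,y,c)$ is a root then $\langle x\cup y\rangle$ is non-singular, whence $a\not\sim c$.

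Next I would show that $\Gamma$ is $2$-plump, so that it is a Veldkamp graph. At a line this is immediate: by thickness a line carries at least three points, so for any one or two prescribed points there is a third distinct from them, opposite to them under (b). At a point $a$ the task is, given lines $x_1,x_2$ through $a$, to produce a line $z$ through $a$ with $\langle z\cup x_1\rangle$ and $\langle z\cup x_2\rangle$ both non-singular; equivalently, a point $p\sim a$ collinear with neither all of $x_1$ nor all of $x_2$. This is the one genuinely polar-geometric ingredient, and is exactly what \ref{hau21}, \ref{hau22} and \ref{hau22z} are meant to deliver; I expect this step to be the main obstacle. For (VP1), $\Gamma$ is bipartite by construction, and it is connected because any two non-collinear points have a common neighbour by \ref{hau22} (so lie at distance $4$), collinear points lie at distance $2$, and every line meets a point.

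For (VP2) with $n=4$ I would argue that a straight path of length at most $3$ is determined by its endpoints. Uniqueness of straight $2$-paths is partial linearity (\ref{hau11}(i)): two lines meet in at most one point, and two points lie on at most one line. For a straight $3$-path from a point $a$ to a line $y$, the middle point must be the unique point of $y$ collinear with $a$, which by (BS) is forced unless $y\subseteq a^\perp$; but then $\langle a\cup y\rangle$ would be singular and admit no straight $3$-path at all, and the line-to-point parity is symmetric. For (VP3) I would close a straight $5$-path into a straight $8$-circuit. Writing it (up to parity) as $(P_0,\ell_1,P_1,\ell_2,P_2,\ell_3)$, straightness at $P_1$ gives $P_0\not\sim P_2$, so by (BS) the set $P_0^\perp\cap\ell_3$ is a single point $P_3\neq P_2$; with $\ell_4=P_0P_3$ I would check, using only (BS) and the singularity dictionary, that the path is straight at $P_3$ (since $\ell_3\not\subseteq P_0^\perp$) and at $P_0$ (since $P_1^\perp\cap\ell_3=\{P_2\}$ forces $P_3\not\sim P_1$), and that the closed $8$-path obtained is a genuine $8$-circuit. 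The opposite parity is handled identically.

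It then remains to see that $\Gamma$ is flat and to settle the generalized quadrangle criterion. As $\Gamma$ is by now a green Veldkamp quadrangle, \ref{fla32} reduces flatness to the absence of weeds; but in a weed $(a,x,b,y,c)$ the $2$-path $(x,b,y)$ is non-straight, i.e. $\langle x\cup y\rangle$ is singular, which places $a$ and $c$ in a common singular subspace and so makes them collinear, contradicting ${\rm dist}(a,c)=4$. Hence $\Gamma$ has no weeds and is flat. Finally, by \ref{toy34} the graph $\Gamma$ is a generalized quadrangle exactly when all its local opposition relations are trivial; since those at lines already are, this happens precisely when every two lines through a point span a non-singular plane, i.e. when $S$ has no singular plane, i.e. (by \ref{hau9x}) when lines are maximal singular subspaces, namely ${\rm rk}(S)=2$. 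Equivalently, a singular plane forces a $6$-circuit in $\Gamma$, which obstructs the generalized quadrangle property exactly when ${\rm rk}(S)\ge3$.
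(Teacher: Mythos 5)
Your proposal follows essentially the same route as the paper: translate the local opposition relations into statements about singular subspaces, verify $2$-plumpness, (VP1)--(VP3) directly, and reduce flatness to the absence of weeds via \ref{fla32}. Two points of comparison are worth recording. First, the one step you flag as ``the main obstacle'' --- $2$-plumpness at a point $a$ --- is exactly where the paper spends its polar-geometric capital, and it is completed with precisely the lemmas you name: by non-degeneracy choose $b\notin a^\perp$; by (BS) each of $x_1,x_2$ meets $b^\perp$ in a unique point $d$, resp.\ $e$, and both lie in $a^\perp\cap b^\perp$, which by \ref{hau22} is a thick non-degenerate polar space; by \ref{hau21} applied inside it there is $f$ there collinear with neither $d$ nor $e$, and then $z=af$ is opposite both $x_1$ and $x_2$ at $a$ (if $f$ were in $x_i^\perp$ it would be collinear with $d$, resp.\ $e$). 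So your reduction is correct and the step does not fail; it just needed to be carried out. (\ref{hau22z} is not needed here; the paper uses it only for \ref{hau14}.) Second, your flatness argument is genuinely shorter than the paper's: in a weed $(a,x,b,y,c)$ only the middle $2$-path can be non-straight, so $\langle x\cup y\rangle$ is singular, hence $a$ and $c$ are collinear, contradicting ${\rm dist}(a,c)=4$. The paper instead first establishes $e^{\rm op}=\Gamma_4(e)$ and shows $a^{\rm op}=b^{\rm op}$ forces $a=b$, then invokes \ref{fla5x}(i) and \ref{fla32}; your version avoids that detour, at the cost of nothing, since \ref{fla32} already reduces flatness to the absence of weeds. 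Your (VP2) and (VP3) verifications are in substance identical to the paper's (the paper phrases (VP2) as the absence of $4$-circuits and of $6$-circuits through straight $3$-paths, while you phrase it as uniqueness of the middle vertex via $a^\perp\cap y$), and the rank criterion at the end is argued the same way in both.
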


\begin{proof}
Let $a\in P$ and $x,y\in\Gamma_a$. Then \ref{hau4} and \ref{hau11}(ii), imply that
if $x$ and $y$ are not opposite at $a$, then 
every point in $x\backslash\{a\}$ is 
collinear with every point in $y\in\backslash\{a\}$. By \cite[7.4.7(v)]{BC}, the 
converse holds. Thus $x$ and $y$ are opposite at $a$ if and only if 
some point in $x\backslash\{a\}$ is not collinear with some point in $y\in\backslash\{a\}$.
By (BS), it follows that if $x$ and $y$ are opposite at $a$, then 
no point in $x\backslash\{a\}$ is collinear with any point in $y\in\backslash\{a\}$.

We claim that $\Gamma$ is $2$-plump. Let $a\in P$. Since $S$ is non-degenerate, we can 
choose a point $b$ not collinear with $a$. Let $x,y\in\Gamma_a$. By (BS), there is a 
unique point $d$ on $x$ collinear with $b$ and a unique point $e$ on $y$ collinear with $b$.
By \ref{hau21} and \ref{hau22}, there exists a point $f$ in $a^\perp\cap b^\perp$
that is collinear with neither $d$ nor $e$. Let $z=af$. Then $z\in\Gamma_a$ is locally 
opposite $x$ and $y$ at $a$. Since $S$ is thick, $|\Gamma_x|\ge3$ also for $x\in L$.
Thus $\Gamma$ is $2$-plump, as claimed.

By (BS), ${\rm dist}(a,x)\le3$ for all $a\in P$ and all $x\in L$. Thus 
\begin{equation}\label{hau12b}
{\rm diam}(\Gamma)\le4.
\end{equation}
In particular, $\Gamma$ is connected. Thus it satisfies (VP1). 
It follows from \ref{hau11}(i) that $\Gamma$ does not contain
any $4$-circuits. Let $(a,x,b,y)$ is a $3$-path with $a\in P$. Suppose that $\langle x\cup y\rangle$
is non-singular. Then every point in $x$ is collinear with $b$ but no other
point in $y$. Hence ${\rm dist}(a,c)=4$ for all $c\in y\backslash\{b\}$. In other words, 
$(a,x,b,y)$ does not lie on a $6$-circuit. Thus $\Gamma$ satisfies (VP2).
Now suppose that $\langle x\cup y\rangle$ is singular. Then $a$ is collinear with every point on $y$.
Hence $\Gamma_y\subset \Gamma_2(a)$. Thus we conclude that 
\begin{equation}\label{hau12c}
\Gamma_w\subset \Gamma_2(f)
\end{equation}
for all $w\in L$ and all $f\in P$ such that there is a non-straight $3$-path from $w$ to $f$
and hence
\begin{equation}\label{hau12a}
e^{\rm op}=\Gamma_4(e)
\end{equation}
for all points $e$.

Now suppose that $(a,x,b,y,c,z)$ be a straight $5$-path. 
Then ${\rm dist}(a,c)=4$ by \eqref{hau12a}, so by \eqref{hau12b}, 
we have ${\rm dist}(a,z)=3$. Hence there exist $d,u$ such that
$(a,x,b,y,c,z,d,u,a)$ is a closed $8$-path. The subspace $\langle z\cup u\rangle$ is non-singular
since otherwise $c\in\Gamma_z\subset \Gamma_2(a)$ by \eqref{hau12c}. Since the
subspace $\langle y\cup z\rangle$ is also non-singular, we have
${\rm dist}(b,d)=4$ by \eqref{hau12a}. Thus $\langle x\cup u\rangle$ is non-singular
since otherwise $d\in\Gamma_u\subset \Gamma_2(b)$ by \eqref{hau12c}.
We conclude that $\Gamma$ satisfies (VP3). Thus $\Gamma$ is a green Veldkamp quadrangle.
Suppose that $a^{\rm op}=b^{\rm op}$ for $a,b\in P$. 
Since $b$ is not opposite itself, it follows by \eqref{hau12a} that ${\rm dist}(a,b)\ne4$.
Suppose that ${\rm dist}(a,b)=2$. Since $\Gamma$ is green, there exists a 
root $(a,w,b,v,c)$ starting at $a$ and containing $b$, but then the last 
vertex $c$ of this root would be in $a^{\rm op}\backslash b^{\rm op}$. 
By \eqref{hau12b}, therefore, $a=b$. By \ref{fla5x}(i), it follows that $\Gamma$ 
contains no weeds. By \ref{fla32}, we conclude that $\Gamma$ is flat. 

If ${\rm rk}(S)=2$, then $\langle x\cup y\rangle$ is non-singular for all $x,y\in L$,
hence every path in $\Gamma$ is straight and thus $\Gamma$ is a generalized $4$-gon.
Suppose, conversely, that every path in $\Gamma$ is straight. Then 
$\langle x\cup y\rangle$ is non-singular for all $x,y\in L$ such that $x\cap y\ne\emptyset$.
By \ref{hau4} and \ref{hau11}(ii), $x\cap y\ne\emptyset$ for every two lines contained
in a singular plane. Hence there are no singular planes. In other words, ${\rm rk}(S)=2$.
\end{proof}

\begin{theorem}\label{hau13}
Let $\Gamma=(V,E)$ be a flat green Veldkamp quadrangle with point set $P$ and line set $L$
and let $L_1=\{\Gamma_x\mid x\in L\}$.
Then the following hold:
\begin{enumerate}[\rm(i)]
\item $S_\Gamma:=(P,L_1)$ is a thick non-degenerate polar space with $L_1\ne\emptyset$.
\item The construction $\Gamma\rightsquigarrow S_\Gamma$ and the construction
$S\rightsquigarrow\Gamma_S$ in {\rm\ref{hau12}}
are inverses of each other.
\item ${\rm rk}(S_\Gamma)=2$ if and only if $\Gamma$ is a generalized quadrangle.
\end{enumerate}
\end{theorem}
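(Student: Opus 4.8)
The plan is to verify the three parts in order, with essentially all the content concentrated in the polar-space axiom (BS) for part~(i) and the matching of opposition relations for part~(ii). First I would record the routine structural facts. Since $\Gamma$ is $2$-plump it has edges, so $L\neq\emptyset$ and thus $L_1\neq\emptyset$, and moreover $|\Gamma_x|\ge3$ for every $x\in L$, so the members of $L_1$ are thick. Distinct lines share at most one point by \ref{fla3}, so equal neighbourhoods would force $x=y$; hence $x\mapsto\Gamma_x$ is injective and I may identify $L$ with $L_1$, and the same fact \ref{fla3} shows $(P,L_1)$ is partially linear. Throughout I use that $P$-vertices lie at even distance and that $\mathrm{diam}(\Gamma)=4$ by \ref{toy21}, so that two distinct points are collinear exactly when they are at distance~$2$ and, since $\Gamma$ is flat, opposite exactly when at distance~$4$ by \ref{toy25}; thus $a^\perp=\{a\}\cup\Gamma_2(a)$.

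The heart of part~(i) is (BS). Fix $a\in P$ and $x\in L$. If $a\in\Gamma_x$ then every point of $x$ is collinear with $a$ and $\Gamma_x\subseteq a^\perp$. Otherwise $\mathrm{dist}(a,x)=3$, so some $b\in\Gamma_x$ is collinear with $a$; put $w=a\wedge b$. I expect the main obstacle to be the following clean dichotomy, which I would isolate as a lemma. If $w$ is opposite $x$ at $b$, then for every $c\in\Gamma_x\setminus\{b\}$ the path $(a,w,b,x,c)$ is a root (straightness at the two line-vertices is automatic because $\Gamma$ is green), forcing $a$ opposite every such $c$, so $a^\perp\cap\Gamma_x=\{b\}$. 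If instead $w$ is not opposite $x$ at $b$, then for every $c\in\Gamma_x\setminus\{b\}$ the $4$-path $(a,w,b,x,c)$ is non-straight with point endpoints, so were $a$ and $c$ at distance~$4$ it would be a \emph{weed} in the sense of \ref{fla1} --- impossible, since $\Gamma$ is flat and hence weed-free by \ref{fla32}; thus $\mathrm{dist}(a,c)=2$ and $\Gamma_x\subseteq a^\perp$. Either way (BS) holds, and the point worth stressing is that the entire force of the polar-space axiom is exactly the flatness (no-weeds) hypothesis. Non-degeneracy is then immediate: extending the one-vertex path at $a$ to a straight $4$-path by repeated $2$-plumpness (\ref{toy5x}) produces a point opposite $a$, so $P\neq a^\perp$. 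This establishes (i).

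For part~(ii) one composite is formal: starting from a thick non-degenerate polar space $S$, the neighbours in $\Gamma_S$ of a line-vertex $x$ are precisely the points of $x$, so $S_{\Gamma_S}$ has the same points and the same line-sets as $S$, giving $S_{\Gamma_S}=S$. For the other composite $\Gamma\rightsquigarrow S_\Gamma\rightsquigarrow\Gamma_{S_\Gamma}$, the underlying graphs agree under $x\leftrightarrow\Gamma_x$ and the local relations at lines are trivial on both sides, so the only thing to check is that the relation of \ref{hau12}(a) on $S_\Gamma$ reproduces the original relation at points. For lines $x,y$ through $a$, a root argument (via $(b,x,a,y,b')$, straight at $a$ by opposition and at the ends by greenness) shows that if $x$ is opposite $y$ at $a$ then every point of $x\setminus\{a\}$ is opposite every point of $y\setminus\{a\}$, while the weed argument of the previous paragraph, applied with $a$ as the shared point, shows that if $x$ is not opposite $y$ at $a$ then every such pair is collinear. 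This dichotomy matches exactly the condition that $\langle\Gamma_x\cup\Gamma_y\rangle$ be non-singular, which is how \ref{hau12}(a) defines opposition; hence $\Gamma_{S_\Gamma}=\Gamma$ as Veldkamp quadrangles.

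Finally, (iii) is a corollary of (ii) and \ref{hau12}: since $\Gamma=\Gamma_{S_\Gamma}$, the last assertion of \ref{hau12} applied to $S=S_\Gamma$ says that $\Gamma$ is a generalized quadrangle if and only if $\mathrm{rk}(S_\Gamma)=2$. The only genuinely delicate step in the whole argument is the (BS)/opposition dichotomy, and I anticipate its proof to be short once the relevant configuration is recognized as a potential weed; the remaining verifications are bookkeeping about distances, roots, and the identification $x\leftrightarrow\Gamma_x$.
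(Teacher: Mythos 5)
Your proposal is correct and follows essentially the same route as the paper: the content of (i) is the verification of (BS) via the observation that a non-straight $4$-path between points at distance~$4$ would be a weed, which flatness forbids by \ref{fla32}, with thickness from $2$-plumpness and non-degeneracy from \ref{toy20}. Your treatment of (ii) is more explicit than the paper's (which dismisses it as holding by definition), but the matching of the two opposition relations you check is exactly the equivalence already established in the first paragraph of the proof of \ref{hau12}, so nothing new is needed.
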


\begin{proof}
Let $a\in P$ and let $x\in L$. 
By \ref{toy21}, ${\rm dist}(a,x)\le3$. Suppose that
${\rm dist}(a,x)=3$ but $\Gamma_x\not\subset \Gamma_2(a)$. Then there exists a 
$4$-path $\alpha=(a,u,b,x,c)$ such that ${\rm dist}(a,c)=4$. 
By \ref{fla32}, $\Gamma$ contains no weeds. It follows that $\alpha$ is a root.
In particular, $(u,b,x)$ is a straight. Since $\Gamma$ is green, $(a,u,b,x,d)$ is a root for
all $d\in\Gamma_x\backslash\{b\}$. Thus $\Gamma_x\cap \Gamma_2(a)=\{b\}$. We conclude
that either $a\in \Gamma_x$ or
$\Gamma_x\subset \Gamma_2(a)$ or $|\Gamma_x\cap \Gamma_2(a)|=1$. Hence $S_\Gamma$ is a
polar space. Since $\Gamma$ is $2$-plump, $S_\Gamma$ is thick. By \ref{toy20},
$S_\Gamma$ is non-degenerate. Thus (i) holds. The assertion~(ii) holds by definition
and (iii) follows from (ii) and \ref{hau12}.
\end{proof}

In the next result, we produce examples of green Veldkamp quadrangles that
are not flat.

\begin{proposition}\label{hau14}
Let $S=(P,L)$ be a thick non-degenerate polar space, suppose that ${\rm rk}(S)\ge3$ and 
let $\Gamma=\Gamma_S$ be as in {\rm\ref{hau12}}. Let $d\in P$,
let $P_1=d^\perp\backslash\{d\}$ and let $L_1=\{x\in L\mid x\subset P_1\}$.
Let $\Omega$ be the subgraph of $\Gamma$ spanned by $P_1\cup L_1$ endowed
with the natural restriction of the local opposition relations of $\Gamma$ to
the neighborhoods of $\Omega$. Then the following hold:
\begin{enumerate}[\rm(i)]
\item $\Omega$ is a green Veldkamp quadrangle that is not flat.
\item Let $a,b\in P_1$. Then $a^{\rm op}=b^{\rm op}$ in $\Omega$ if and only if
$a$, $b$ and $d$ are collinear.
\item Let $e\in d^{\rm op}$, let $P_2=e^\perp\cap P_1$ and let $L_2$ be the set
of lines contained in $P_2$. Then $L_2\ne\emptyset$, $S_2:=(P_2,L_2)$ is a 
thick non-degenerate polar space and $\Gamma_{S_2}$ is, up to isomorphism, 
the flat quotient of $\Omega$.
\end{enumerate}
\end{proposition}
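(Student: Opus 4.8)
The plan is to exploit the dictionary established in \ref{hau12} and \ref{hau13}: the graph $\Gamma=\Gamma_S$ is a flat green Veldkamp quadrangle whose polar space $S_\Gamma$ recovers $S$, and local opposition at a point $a$ corresponds to the subspace $\langle x\cup y\rangle$ being non-singular, i.e. to the two lines \emph{not} lying in a common singular plane. Throughout I would translate every graph-theoretic statement about $\Omega$ into a collinearity statement inside $S$ restricted to $d^\perp$, prove it there using the polar-space machinery of \cite[Chapter~7]{BC} together with Propositions \ref{hau21}--\ref{hau22z}, and translate back. The three parts should be attacked in the order (ii), (i), (iii), since (ii) isolates the combinatorial heart of the matter and feeds directly into the non-flatness assertion in (i).

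**Part (ii) and the non-flatness in (i).**
First I would fix $a,b\in P_1=d^\perp\setminus\{d\}$ and compute $a^{\rm op}$ inside $\Omega$. Using \eqref{hau12a} (valid for $\Gamma$) one has $a^{\rm op}=\Gamma_4(a)$, but in $\Omega$ the opposite set is cut down to those $c\in P_1$ at distance $4$ within the subgraph, which by (BS) means precisely the points $c\in d^\perp$ not collinear with $a$. Thus $a^{\rm op}_\Omega = (d^\perp\setminus a^\perp)$, and the claim $a^{\rm op}=b^{\rm op}$ becomes the statement that $a$ and $b$ have the same non-neighbors inside $d^\perp$, i.e. $a^\perp\cap d^\perp = b^\perp\cap d^\perp$. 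The key step is to show this equality holds exactly when $a,b,d$ are collinear. If $a,b,d$ lie on a common line $z$, then (BS) forces any point of $d^\perp$ collinear with one of $a,b$ to be collinear with all of $z$, giving one inclusion; for the converse I would assume $a,b,d$ are not all collinear and use the non-degeneracy of the thick polar space $a^\perp\cap d^\perp$ (via \ref{hau22}, after checking $a,d$ are an admissible pair) to produce a point collinear with exactly one of $a,b$ inside $d^\perp$, contradicting equality. Once (ii) is in hand, non-flatness in (i) is immediate: any thick line $z$ through $d$ supplies distinct $a,b\in z\cap P_1$ with $a^{\rm op}=b^{\rm op}$, so $\Omega$ is not flat by \ref{toy7}.

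**The Veldkamp-quadrangle axioms for $\Omega$ in (i).**
To show $\Omega$ is a green Veldkamp quadrangle I would verify the axioms of \ref{toy6} with $n=4$ directly, rather than trying to realize $\Omega$ itself as some $\Gamma_{S'}$ (it cannot be, since it is not flat). The greenness is inherited since opposition at lines of $\Omega$ is still ``distinct,'' and (VP1) (connectedness and bipartiteness) follows because $d^\perp$ is itself a subspace, hence a polar space, and the relevant distances are bounded by \eqref{hau12b}. For $2$-plumpness I would reprove the plumpness argument of \ref{hau12} \emph{inside} $d^\perp$: given $a\in P_1$ and two lines through it, apply \ref{hau21} and \ref{hau22} to the non-degenerate thick polar space $a^\perp\cap d^\perp$ to find a line through $a$ locally opposite both. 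Axioms (VP2) and (VP3) are the delicate point and constitute the main obstacle: the subtlety is that straightness and distance in $\Omega$ differ from those in $\Gamma$ because paths are now confined to $d^\perp$, so I cannot simply quote that $\Gamma$ satisfies the axioms. I expect (VP3) to require the singular-plane transversality of \ref{hau22y} and \ref{hau22z} to guarantee that a straight $5$-path in $\Omega$ closes up to a straight $8$-circuit within $d^\perp$; this is where I anticipate spending the most effort, tracking that each non-singular subspace produced by the completion argument actually lies in $d^\perp$.

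**Part (iii) and identification of the flat quotient.**
For (iii) I would first produce $e\in d^{\rm op}$ using \eqref{hau12a}, so $e$ is a point of $S$ not collinear with $d$, and set $P_2=e^\perp\cap d^\perp$ with $L_2$ its lines. That $S_2=(P_2,L_2)$ is a thick non-degenerate polar space is exactly \ref{hau22} applied to the non-collinear pair $d,e$, and $L_2\neq\emptyset$ follows from $\operatorname{rk}(S)\ge 3$ (so that $e^\perp\cap d^\perp$ still contains lines). Hence $\Gamma_{S_2}$ is a flat green Veldkamp quadrangle by \ref{hau12}. To identify it with the flat quotient $\bar\Omega$ of \ref{fla88}, I would invoke part (ii): the equivalence $\simeq$ on points of $\Omega$ has classes $[a]=(a^\perp\cap z)\cap P_1$ where $z=\langle a\cup d\rangle$ is the line through $a$ and $d$, i.e. the points of $\Omega$ collinear with $d$ on a common line through $d$. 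The natural map sending each such class to its unique representative in $P_2=e^\perp\cap P_1$ (using that $e$ is opposite $d$, so by (BS) each line through $d$ meets $e^\perp$ in exactly one point) is the desired bijection on points; it carries lines of $\Omega$ through a fixed point compatibly, and by the uniqueness of the flat quotient in \ref{fla88} together with \ref{fla32x}, this identifies $\Gamma_{S_2}\cong\bar\Omega$. The step needing care is checking that this representative map respects the \emph{opposition} relations of $\bar\Omega$ as defined in \ref{fla12}, which I would reduce, via \ref{fla16} and part (ii), to the statement that non-singularity of $\langle x\cup y\rangle$ is detected inside the residue $S_2$.
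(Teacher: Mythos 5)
Your proposal is a strategy outline rather than a proof, and two of its load-bearing steps do not hold up. The most concrete error is in the converse direction of (ii): you propose to separate $a$ from $b$ by ``the non-degeneracy of the thick polar space $a^\perp\cap d^\perp$ (via \ref{hau22}, after checking $a,d$ are an admissible pair).'' But \ref{hau22} applies only to two \emph{non-collinear} points, and here $a\in P_1=d^\perp\setminus\{d\}$, so $a$ and $d$ are collinear. In fact $a^\perp\cap d^\perp$ is always a \emph{degenerate} polar space: by (BS), every point of the line $ad$ is collinear with every point of $a^\perp\cap d^\perp$, so the radical contains $ad$. Your proposed source of a point of $d^\perp$ collinear with exactly one of $a,b$ therefore collapses, and this is precisely the direction of (ii) that needs real work. (The paper avoids this entirely: it proves (ii) inside the Veldkamp machinery, showing that every $4$-path between two points of a line through $d$ is a weed and invoking \ref{fla5x}(i), and conversely using \ref{fla42} and \eqref{hau12a} to pin $b$ onto $\Gamma_{ad}$.) Relatedly, your identity $a^{\rm op}_\Omega=P_1\setminus a^\perp$ needs an existence argument you omit: opposition requires a \emph{root} inside $\Omega$, i.e.\ a midpoint $m\in a^\perp\cap c^\perp\cap d^\perp$ with $m\ne d$, which exists only because ${\rm rk}(S)\ge3$ forces $a^\perp\cap c^\perp$ to contain a line through $d$.

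The second gap is that (VP2)/(VP3) for $\Omega$ --- which you correctly identify as the crux of (i) --- are left as anticipated future effort, with only a vague pointer to \ref{hau22y} and \ref{hau22z}. The paper's route is different and worth knowing: it first proves ${\rm diam}(\Omega)=4$ by showing that every point--line pair at distance $3$ in $\Gamma$ with both endpoints in $\Omega$ remains at distance $3$ in $\Omega$ (the intermediate line is forced into $L_1$ by \eqref{hau12c}, since the relevant $3$-paths through $d^\perp$ are non-straight), after which (VP2) is inherited verbatim and (VP3) follows almost formally: the unique straight $3$-path in $\Gamma$ closing a straight $5$-path of $\Omega$ must coincide with a $3$-path realizing the $\Omega$-distance, hence lies in $\Omega$. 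The singular-plane lemmas \ref{hau22y}--\ref{hau22z} are used in the paper only for $2$-plumpness, not for (VP3). Your part (iii) is essentially the paper's argument (the bijection $[a]\mapsto\Gamma_{ad}\cap P_2$, checked to preserve collinearity in both directions and then promoted to an isomorphism of polar spaces), so that portion is sound in outline, but as it stands the proposal does not establish (i) and its mechanism for (ii) is broken.
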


\begin{proof}
We first want to show that $\Omega$ is $2$-plump. Let $a\in P_1$, let $u=ad$ and choose 
$x,y\in\Omega_a$. Since $\Gamma_x\subset P_1$, it follows from \eqref{hau12a}
that the path $(u,a,x)$ is not straight.
Hence $E:=\langle u\cup x\rangle$ is a singular plane containing $u$. 
Similarly, $F:=\langle u\cup y\rangle$ is a singular plane containing $u$.
By \ref{hau22z}, there exists a singular plane $G$ containing $u$ such
that neither $\langle E\cup G\rangle$ nor $\langle F\cup G\rangle$ is singular.
Let $z$ be a line in $G$ that intersects $u$ in the point $a$. 
Since $G$ is singular, the path $(u,a,z)$ is not straight. 
By \eqref{hau12c}, we have $\Gamma_z\subset P_1$. In other words, $z\in L_1$.
If $(x,a,z)$ were not straight, then $\langle x\cup z\rangle$ would be singular, but 
by \cite[7.4.7(v)]{BC}, this would imply that $\langle E\cup G\rangle$ is singular.
Hence $(x,a,z)$ is straight. Similarly,
$(y,a,z)$ is straight. Thus local opposition at $a$ is $2$-plump. Since 
$\Gamma$ is thick, so is $\Omega$. Therefore $\Omega$ is $2$-plump.

Again let $a\in P_1$ and $u=ad$. This time let $x$ be an arbitrary element of $L_1\backslash\Omega_a$.
By \ref{toy21}, there are $3$-paths in $\Gamma$ from $a$ to $x$.
Let $(a,y,b,x)$ be one of them. Suppose that $y\ne u$. Since $x\in L_1$, we have $b\in d^\perp$.
By \ref{toy20}, therefore, $d$ and $b$ are not opposite, so $(d,u,a,y,b)$ is not a root.
Hence $(d,u,a,y)$ is not straight and thus $y\in L_1$ by \eqref{hau12c}. Thus the distance from
$a$ to $x$ in $\Omega$ is~$3$.

Suppose now that $y=u$. Since $x\in L_1$, 
the path $(d,u,b,x)$ is not straight by \eqref{hau12a}. Hence $(a,u,b,x)$ is not straight.
Therefore $\Gamma_x\subset\Gamma_2(a)$ by \eqref{hau12c}. Let $c\in\Gamma_x\backslash\{b\}$.
Then there exists a $3$-path $(a,v,c,x)$ in $\Omega$
from $a$ to $x$ containing $c$ and $v\ne u$.
By \ref{fla40}, $(u,a,v)$ is not straight. By \eqref{hau12c},
$v\in L_1$. Therefore the distance from $a$ to $x$ in $\Omega$ is~$3$
also in this case.
By \ref{fla3}, however, $(a,u,b)$ is the only $2$-path in $\Gamma$ from $a$ to $b$,
so the distance from $a$ to $b$ in $\Omega$ is not~$2$. Since $b\in\Omega_x$, we conclude
that the distance from $a$ to $b$ in $\Omega$ is~$4$ and that $(a,v,c,x,b)$ is a non-straight
$4$-path from $a$ to $b$ in $\Omega$.

We have thus shown that 
\begin{equation}\label{hau40}
{\rm diam}(\Omega)=4.
\end{equation}
In particular, $\Omega$ is connected. Therefore $\Omega$ satisfies
(VP1). Since $\Gamma$ satisfies (VP2), so does $\Omega$. 

Now suppose that $(a,x,b,y,c,z)$ is a straight $5$-path in $\Omega$. 
By (VP2) and (VP3), there is a unique straight $3$-path $\alpha$ from $a$ to $z$ in $\Gamma$.
By \eqref{hau40}, the distance from $a$ to $z$ in $\Omega$ is also~$3$. 
By \ref{toy20}, ${\rm dist}(a,c)=4$. Thus $\Gamma_z\not\subset\Gamma_2(a)$.
By \eqref{hau12a}, it follows that $\alpha$ lies in $\Omega$.
Hence $\Omega$ satisfies (VP3). Thus $\Omega$ is a green Veldkamp quadrangle. 

Next let $a,b\in P_1$, let $u=da$ and suppose that $b\in\Gamma_u\backslash\{a\}$.
By \ref{fla3}, there is no $x\in L_1$ such that $\Gamma_x$ contains $a$ and $b$.
By \eqref{hau40}, it follows that the distance from $a$ to $b$ in $\Omega$ is~$4$.
By \ref{fla40}, there is no root from $a$ to $b$. Hence every $4$-path from
$a$ to $b$ in $\Omega$ is a weed in $\Omega$. 
By \ref{fla5x}(i), we conclude that $a^{\rm op}=b^{\rm op}$ in $\Omega$.

Now let $a,b\in P_1$, let $u=ad$ and suppose that $a\ne b$ and $a^{\rm op}=b^{\rm op}$ in $\Omega$.
By \ref{fla10}(ii) and \ref{fla5x}(i), the distance from $a$ to 
$b$ in $\Omega$ is~$4$ and every $4$-path from $a$ to $b$ in $\Omega$ is a weed in $\Omega$. 
Let $(a,y,c,x,b)$ be a $4$-path from $a$ to $b$ in $\Omega$. Thus $(a,y,c,x)$
is not straight and by \ref{fla42}, $b$ is the unique element of $\Gamma_x$ that
is not at distance~$2$ from $a$ in $\Omega$. 
By \eqref{hau12a}, $b\in\Gamma_x\subset\Gamma_2(a)$. Hence 
$b\in\Gamma_u$. We conclude that~(i) and~(ii) hold.

Finally, we let $e$ and $S_2=(P_2,L_2)$ be as in~(iii). 
By \ref{hau22}, $S_2$ is a thick
non-degenerate polar space. Let $\bar\Omega$ denote
the flat quotient of $\Omega$ and let $S_{\bar\Omega}$ 
be as in \ref{hau13}(i). Thus by \ref{fla12}, the point set of $S_{\bar\Omega}$ is the
set of equivalence classes $[a]$ for all $a\in P_1$ as defined in \ref{fla70}
(with $\Omega$ in place of $\Gamma$).
By (BS), for each $w\in\Gamma_d$,
there exists a unique $a\in\Gamma_w\cap P_2$. By (ii), therefore, there exists
a cannonical bijection $\psi$ from the point set of $S_{\bar\Omega}$ to $P_2$ mapping
the equivalence class $[a]$ 
to the unique vertex in $\Gamma_{ad}\cap P_2$ for all $a\in P_1$.
Let $a,b\in P_1$ and suppose that the points $[a],[b]$ of $S_{\bar\Omega}$
are distinct. Let $u=ad$ and $v=bd$ and let $a'=\psi([a])$ and $b'=\psi([b])$. 
Suppose first that $[a]$ and $[b]$ are collinear in $S_{\bar\Omega}$.
By \ref{fla20}(ii), we can assume that the distance between $a$ and $b$ in $\Omega$ is~$2$.
Let $x=a\wedge b$.  Since $x$ is a vertex of $\Omega$, we have $x\in\Gamma_3(d)$.
Hence $(a,u,d,v,b,x,a)$ is a closed $6$-circuit. By \ref{fla40}, therefore, $(u,d,v)$ is
not straight. Therefore also $(a',u,d,v,b')$ is not
straight. By \ref{hau12}, $\Gamma$ is flat. By \ref{fla32}, therefore,
$(a',u,d,v,b')$ is not a weed. Hence 
${\rm dist}(a',b')=2$. It follows that
$a'$ and $b'$ are collinear in $S_2$. Thus $\psi$ maps collinear points of $S_{\bar\Omega}$
to collinear points of $S_2$. Conversely, if $a'$ and $b'$ are collinear in $S_2$, then 
(since $L_2\subset L_1$), $[a']=[a]$ and $[b']=[b]$ are collinear in $S_{\bar\Omega}$.
We conclude that $[a]$ and $[b]$ are collinear in $S_{\bar\Omega}$ if and only if
$a'$ and $b'$ are collinear in $S_2$. By \cite[7.4.12]{BC}, it follows that
$\psi$ is an isomorphism from $S_{\bar\Omega}$ to $S_2$. By \ref{hau13},
it follows that $\psi$ induces an isomorphism from $\bar\Omega$ to $\Gamma_{S_2}$. Thus (iii) holds.
\end{proof}

\section{Tits quadrangles}

In this section, we give the classification of green Veldkamp quadrangles
that are not generalized quadrangles in \ref{chin5} and \ref{chin6}.
In particular, we show that these Veldkamp quadrangles are all, in fact, Tits quadrangles
as defined in \cite[1.1.6 and 1.1.8]{TP}.

\begin{definition}\label{chin1}
Let $\Gamma$ be a Veldkamp $n$-gon for some $n\ge3$ and let
$$\alpha=(x_0,x_1,\ldots,x_n)$$ 
be a root. We denote by $U_\alpha$ the pointwise stabilizer in ${\rm Aut}(\Gamma)$ of the set 
$$\Gamma_{x_1}\cup\Gamma_{x_2}\cup\cdots\cup\Gamma_{x_{n-1}}.$$
The subgroup $U_\alpha$
is called the {\it root group} of $\Gamma$ associated with $\alpha$. 
\end{definition}

\begin{definition}\label{chin1x}
Let $\Gamma$ be a Veldkamp $n$-gon for some $n\ge3$. Then $\Gamma$ is {\it Moufang}
if for each root $\alpha=(x_0,x_1,\ldots,x_n)$, the root group $U_\alpha$ acts 
transitively on the set of vertices in $\Gamma_{x_n}$ that are opposite $x_{n-1}$
at $x_n$.
\end{definition}

\begin{proposition}\label{chin1y}
Let $\Gamma$ be a Veldkamp graph, let $(x,y,z)$ be a straight $2$-path, let
$M$ be a subgroup of the stabilizer of $(x,y)$ in ${\rm Aut}(\Gamma)$ and let
$g$ be an element of ${\rm Aut}(\Gamma)$ mapping $(y,x)$ to $(y,z)$.
Suppose that $M$ acts transitively on the set of vertices in $\Gamma_y$ that are
opposite $x$. Then $\langle M,M^g\rangle$ acts transitively on $\Gamma_y$.
\end{proposition}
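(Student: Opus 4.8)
The plan is to study the action of $H:=\langle M,M^g\rangle$ on $\Gamma_y$ by following the $H$-orbit $\Delta$ of the vertex $x$ and proving $\Delta=\Gamma_y$. For a vertex $p\in\Gamma_y$, write $\Pi_p:=\{w\in\Gamma_y\mid p\equiv_y w\}$ for the set of neighbors of $y$ opposite $p$ at $y$; the hypothesis is that $M$ fixes $x$ and $y$ and acts transitively on $\Pi_x$. The first thing I would record is the corresponding data for $M^g$. Since $g$ maps $(y,x)$ to $(y,z)$ it fixes $y$ and sends $x$ to $z$, and as an automorphism it preserves $\equiv_y$. A short computation with the left-to-right convention $M^g=g^{-1}Mg$ then shows that $M^g$ fixes $z$ and $y$ and that $g$ carries the $M$-action on $\Pi_x$ to the $M^g$-action on $\Pi_z$; hence $M^g$ acts transitively on $\Pi_z$. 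Because $(x,y,z)$ is straight we have $x\equiv_y z$, so $x\in\Pi_z$ and $z\in\Pi_x$.

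Next I would show $\Pi_x\cup\Pi_z\subseteq\Delta$. Since $x\in\Pi_z$ and $M^g\le H$ is transitive on $\Pi_z$, the orbit $\Delta$ contains all of $\Pi_z$. Now $\equiv_y$ is $2$-plump, as $\Gamma$ is a Veldkamp graph, so applying $2$-plumpness to the set $\{x,z\}$ produces a vertex $u_0$ opposite both $x$ and $z$. Then $u_0\in\Pi_z\subseteq\Delta$ and $u_0\in\Pi_x$, and transitivity of $M\le H$ on $\Pi_x$ yields $\Pi_x\subseteq\Delta$ as well.

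The key step is to upgrade this to $\Pi_w\subseteq\Delta$ for \emph{every} $w\in\Pi_x$. Given such a $w$, transitivity of $M$ on $\Pi_x$ (using $z\in\Pi_x$) supplies $m\in M$ with $z^m=w$. Conjugating, $(M^g)^m\le H$ fixes $w$ and acts transitively on $(\Pi_z)^m$; since $m$ fixes $y$ and preserves $\equiv_y$ with $z^m=w$, one has $(\Pi_z)^m=\Pi_w$. As $w\in\Pi_x\subseteq\Delta$, this forces $\Pi_w\subseteq\Delta$. Finally, for an arbitrary $v\in\Gamma_y$ with $v\ne x$, a last application of $2$-plumpness to $\{x,v\}$ gives a vertex $w$ opposite both $x$ and $v$; then $w\in\Pi_x$ and $v\in\Pi_w\subseteq\Delta$, so $v\in\Delta$. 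Since $x\in\Delta$ trivially, we conclude $\Delta=\Gamma_y$, i.e. $H$ is transitive on $\Gamma_y$.

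The main obstacle is reaching the vertices of $\Gamma_y$ that are opposite neither $x$ nor $z$: on their own, $M$ and $M^g$ only permute the sets $\Pi_x$ and $\Pi_z$. The device that overcomes this is the conjugate $(M^g)^m$, which manufactures, for each $w\in\Pi_x$, a subgroup of $H$ that is transitive on $\Pi_w$, combined with the repeated use of $2$-plumpness to guarantee the required common opposites. I expect the only point needing care to be the bookkeeping showing that $(M^g)^m$ fixes $w$ and is transitive on $\Pi_w$; this is routine once one notes that every map involved fixes $y$ and preserves $\equiv_y$.
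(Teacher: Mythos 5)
Your proof is correct and takes essentially the same route as the paper's (much terser) argument: conjugate to see that $M^g$ fixes $z$ and is transitive on the vertices of $\Gamma_y$ opposite $z$, then use $2$-plumpness to sweep out all of $\Gamma_y$; your conjugates $(M^g)^m$ supply exactly the detail the paper leaves to the reader. One micro-point: to conclude $\Pi_w\subseteq\Delta$ you should invoke $x\in\Pi_w\cap\Delta$ (opposition is symmetric, and $(M^g)^m$ is transitive on $\Pi_w$) rather than $w\in\Delta$, since $w\notin\Pi_w$ by anti-reflexivity.
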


\begin{proof}
The conjugate $M^g$ fixes $z$ and acts transitively on the set of
vertices of $\Gamma_y$ opposite $z$. Since $\Gamma$ is $2$-plump,
it follows that $\langle M,M^g\rangle$ acts transitively on $\Gamma_y$.
\end{proof}

\begin{remark}\label{chin2}
Let $\Gamma$ be a Veldkamp $n$-gon for some $n\ge3$. Then $\Gamma$ is Moufang
if and only if it is a Tits $n$-gon as defined in \cite[1.1.6 and 1.1.8]{TP}.
\end{remark}

\begin{example}\label{chin3}
Let $\Lambda$ be either 
a quadratic space 
$$(K,L,q)$$ 
(Case~I) or 
a pseudo-quadratic space 
$$\Lambda=(K,K_0,\sigma,L,q)$$ 
as defined in \cite[11.15--11.17]{TW} (Case~II). 
In Case~II, $L$ is a right vector
space over $K$ and the associated sesquilinear form $f$ on $L$ is assumed to be
skew-hermitian. 
In Case~I,we write scalars on the left, we
let $f$ denote the bilinear form associated with $q$ and we let 
$\sigma$ denote the trivial automorphism of $K$. 
In both cases, we assume that $\Lambda$ is non-degenerate as defined in 
\cite[8.2.3]{spherical}. Thus
$$\{a\in L\mid q(a)=f(a,a)=0\}=\{0\}$$
in Case~I and 
$$\{a\in L\mid f(a,a)=0\text{ and }q(a)\in K_0\}=\{0\}$$
in Case~II. In Case~II, $L=0$ is allowed, but in Case~I, $L\ne0$. 
In Case~II we assume that ${\rm char}(K)\ne2$ if $K_0=K$. 
We do not make any restriction on the Witt index of $q$ in Case~I or in Case~II.

It is important for the reader to keep in mind 
that $\{t+t^\sigma\mid t\in K\}\subset K_0$ in Case~II.
Many of the claims we make below depend on this observation.

Let $V=K^4\oplus L$ and let $Q\colon V\mapsto K$ be given by
$$Q(t_1,t_1',t_2,t_2',a)=t_1^\sigma t_1'+t_2^\sigma t_2'+q(a)$$
for all $(t_1,t_1',t_2,t_2',a)\in V$.
In Case~I, let
$$F\big((s_1,s_1',s_2,s_2',b),(t_1,t_1',t_2,t_2',a)\big)=s_1t_1'+t_1s_1'+s_2t_2'+t_2s_2'+f(a,b)$$
and in Case~II, let
$$F\big((s_1,s_1',s_2,s_2',b),(t_1,t_1',t_2,t_2',a)\big)=s_1^\sigma t_1'-(s_1')^\sigma t_1
+s_2^\sigma t_2'-(s_2')^\sigma t_2+f(b,a)$$
for all $(s_1,s_1',s_2,s_2',b),(t_1,t_1',t_2,t_2',a)\in V$.
In Case~I, $Q$ is a non-degenerate quadratic form on $V$
and $F$ is the bilinear form associated with $Q$. In Case~II,
$Q$ is a non-degenerate pseudo-quadratic form and $F$ is the skew-hermitian form 
associated with $Q$. In particular, the image of $Q$ is defined only modulo $K_0$ and when we 
say that $Q$ vanishes on a given subspace $V_1$ of $V$, we mean that $Q(V_1)\subset K_0$.
If we are in Case~I or in Case~II with $K_0\ne K$, then
by \cite[11.19]{TW}, the form $F$ is uniquely determined by
$Q$ and, in particular, $F$ is identically zero on every subspace on which $Q$ vanishes
identically. If we are in Case~II with $K_0=K$,
then $Q$ vanishes identically on $V$,
$K$ is commutative, $\sigma=1$, $F$ is alternating and ${\rm char}(K)\ne2$ (by hypothesis).
We call this subcase of Case~II the {\it symplectic} case. 

Let $T$ denote the additive group of $L$ 
in Case~I and in Case~II, let $T$ denote
the group with underlying set 
$$\{(a,t)\in L\times K\mid q(a)-t\in K_0\}$$
defined in \cite[11.24]{TW}. Thus $T=K_0$ if $L=0$ and 
$$(a,t)(b,u)=\big(a+b,t+u+f(b,a)\big)$$
and 
$$(a,t)^{-1}=(-a,-t^\sigma)$$
for all $(a,t),(b,u)\in T$ otherwise.
Note that in the symplectic case, the condition $q(a)-t\in K_0$ is vacuous and the underlying set of
$T$ is simply the direct product $L\times K$.

For each $i$, let $W_i$ be the set of subspaces
of $V$ of dimension~$i$ on which $Q$ and $F$ vanish identically.
For $i\ge2$, we identify each element of $W_i$ with the set of elements of $W_1$ it contains.
This makes $(W_1,W_2)$ into a thick line space.
If $V_1\in W_1$ and $V_2\in W_2$, then $\dim_K(V_2\cap V_1^\perp)=1$ or $2$.
(Here orthogonal means orthogonal with respect to the form $F$.) 
Thus the line space $(W_1,W_2)$ is, in fact, a polar space as defined in \ref{hau7}. 
Since $\Lambda$ is non-degenerate, we have $V_1^\perp\ne V$.
Hence the polar space 
$(W_1,W_2)$ is non-degenerate as defined in \ref{hau9}. We denote it by $S_\Lambda$. For each $i$,
the $i$-dimensional singular subspaces of $S=S_\Lambda$ are the subspaces in $W_{i+1}$.
Let $\Gamma=\Gamma_S$ be the corresponding Veldkamp quadrangle as defined in \ref{hau13}.

We want to show that $\Gamma$ is a Tits quadrangle and that its
root group sequences are as described in \cite[16.3]{TW} in Case~I and in \cite[16.2 and 16.5]{TW}
in Case~II, but without the assumption in either case that $q$ is anisotropic. 

For each isotropic vector $u$ in $V$ (i.e.~for each vector $u$ that spans an element of $W_1$)
we will denote simply by $u$ the element of $W_1$ spanned by $u$ when it is clear that we are
referring vertices of $\Gamma$. For each pair of orthogonal 
isotropic vectors $u$ and $v$, we will denote
by $uv$ the element of $W_2$ they span. Next we set
$$x_1=(1,0,0,0,0),\ x_1'=(0,1,0,0,0)$$ 
as well as
$$x_2=(0,0,1,0,0)\text{ and }x_2'=(0,0,0,1,0)$$
in $V$ and we identify $L$ with its image in $V$ under the map $u\mapsto(0,0,0,0,u)$.
Let $\Sigma$ be the straight $8$-circuit of $\Gamma$ with vertex set
$$x_1,x_1x_2',x_2',x_1'x_2',x_1',x_1'x_2,x_2,x_1x_2$$
and let 
$$\alpha=(x_2',x_1x_2',x_1,x_1x_2,x_2)\text{ and }\gamma=(x_1'x_2',x_2',x_1x_2',x_1,x_1x_2)$$
as well as 
$$\beta=(x_1',x_1'x_2',x_2',x_1x_2',x_1)\text{ and }\delta=(x_1'x_2,x_1',x_1'x_2',x_2',x_1x_2').$$
Note that the roots $\alpha,\ldots,\delta$ all contain the edge $\{x_1x_2',x_2'\}$. 

\begin{notation}\label{foo0}
Let $\varepsilon=1$ in Case~I and $\varepsilon=-1$ in Case~II.
Let $\dot\rho$ denote the linear automorphism of $V$ that interchanges $x_1$ with $\varepsilon x_2'$ 
and $x_1'$ with 
$x_2$ and acts trivially on $L$. Let $\dot\tau$ denote the linear automorphism of
$V$ that interchanges
$x_1$ with $\varepsilon x_1'$, fixes $x_2$ and $x_2'$ and acts trivially on $L$,
Both $\dot\rho$ and $\dot\tau$
are isometries of both $Q$ and $F$ and hence both induce automorphisms of $\Gamma$.
We denote thee elements by $\rho$ and $\tau$. 
Both $\rho$ and $\tau$ stabilize $\Sigma$, $\rho$ fixes the vertices $x_1x_2'$ and $x_1'x_2$,
$\tau$ fixes the vertices $x_2$ and $x_2'$ and $\langle\rho,\tau\rangle$ acts transitively
on the edge set of $\Sigma$. Hence every root contained in $\Sigma$
is in the same $\langle\rho,\tau\rangle$-orbit as either $\alpha$ or $\delta$.
\end{notation}

\begin{notation}\label{foo1}
Suppose we are in Case~I.
For all $a,b\in L$, let $\dot\alpha_a$ be the linear automorphism of $V$ 
that fixes $x_1$, $x_2$ and $x_2'$, maps $x_1'$ to $x_1'-q(a)x_1+a$
and maps an arbitrary element $u\in L$ to $u-f(u,a)x_1$ and 
let $\dot\beta_b$ be the linear automorphism of $V$ that fixes $x_1$, $x_1'$ and $x_2'$,
maps $x_2$ to $x_2-q(b)x_2'+b$ and maps an arbitrary element $u\in L$ to 
$u-f(u,b)x_2'$. For all $s,t\in K$, let $\dot\gamma_t$ be the linear automorphism of $V$
that fixes $x_1$ and $x_2'$, acts trivially on $L$, maps
$x_1'$ to $x_1'-tx_2'$ and $x_2$ to $x_2+tx_1$ and let $\dot\delta_s$
be the linear automorphism of $V$ that fixes $x_1'$ and $x_2'$, acts
trivially on $L$ and maps $x_1$ to $x_1+sx_2'$ and $x_2$ to $x_2-sx_1'$.
\end{notation}

\begin{notation}\label{foo2}
Suppose that we are in Case~II.
For all $(a,t),(b,s)\in T$, let $\dot\alpha_{(a,t)}$ be the linear automorphism of $V$ 
that fixes $x_1$, $x_2$ and $x_2'$, maps $x_1'$ to $x_1'+x_1t+a$
and maps an arbitrary element $u\in L$ to $u+x_1f(a,u)$ and 
let $\dot\beta_{(b,s)}$ be the linear automorphism of $V$ that fixes $x_1$, $x_1'$ and $x_2'$,
maps $x_2$ to $x_2-x_2's-b$ and maps an arbitrary element $u\in L$ to 
$u+x_2'f(b,u)$. For all $s,t\in K$, let $\dot\gamma_t$ be the linear automorphism of $V$
that fixes $x_1$ and $x_2'$, acts trivially on $L$, maps
$x_1'$ to $x_1'+x_2't^\sigma$ and $x_2$ to $x_2-x_1t$ and let $\dot\delta_s$
be the linear automorphism of $V$ that fixes $x_1'$ and $x_2'$, acts
trivially on $L$ and maps $x_1$ to $x_1-x_2's^\sigma$ and $x_2$ to $x_2-x_1's$.
\end{notation}

For all $v,w\in T$
and all $s,t\in K$, the maps $\dot\alpha_v$, $\dot\beta_w$, $\dot\gamma_t$
and $\dot\delta_s$ defined in \ref{foo1} and \ref{foo2} are all isometries of $Q$ and $F$
and hence induce automorphisms of $\Gamma$. We
denote these automorphisms by $\alpha_v$, $\beta_w$, $\gamma_t$ and $\delta_s$.
Let 
$$A=\{\alpha_v\mid v\in T\},\ B=\{\beta_w\mid w\in T\},\ C=\{\gamma_t\mid t\in K\}\text{ and }
D=\{\delta_s\mid s\in K\}.$$

\begin{observations}\label{foo3}
Let $K^+$ denote the additive group of $K$.
With a bit of calculation, it can be checked that the following hold:
\begin{enumerate}[\rm(i)]
\item $A$, $B$, $C$ and $D$ are subgroups of ${\rm Aut}(\Gamma)$.
\item $A\subset U_\alpha$, $B\subset U_\beta$, $C\subset U_\gamma$ and $D\subset U_\delta$.
\item $v\mapsto\alpha_v$ and $v\mapsto\beta_v$ are isomorphisms from $T$ to $A$ and from $T$ to $B$
and $t\mapsto\gamma_t$ and $t\mapsto\delta_t$ are isomorphisms from $K^+$
to $C$ and from $K^+$ to $D$.
\item $A$ acts transitively on the set of vertices adjacent
to $x_2$ that are opposite $x_1x_2$ at $x_2$ and on the set of vertices adjacent to $x_2'$
that are opposite $x_1x_2'$ at $x_2'$.
\item $B$ acts transitively on the set of
vertices adjacent to $x_1$ that are opposite $x_1x_2'$ at $x_1$ and on the set of
of vertices adjacent to $x_1'$ that are opposite $x_1'x_2'$ at $x_1'$.
\item $C$ acts transitively on 
$\Gamma_{x_1x_2}\backslash\{x_1\}$ and on $\Gamma_{x_1'x_2'}\backslash\{x_2'\}$.
\item $D$ acts transitively on $\Gamma_{x_1x_2'}\backslash\{x_2'\}$ and on
$\Gamma_{x_1'x_2}\backslash\{x_1'\}$.
\end{enumerate}
\end{observations}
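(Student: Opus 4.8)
The plan is to prove (iii) first, derive (i) as a formal consequence, and then verify (ii) and (iv)--(vii) by direct computation with the linear maps of \ref{foo1} and \ref{foo2}, taking as given---as recorded just before the statement---that each $\dot\alpha_v,\dot\beta_w,\dot\gamma_t,\dot\delta_s$ is an isometry of $Q$ and $F$ and so induces an automorphism of $\Gamma$. I use the convention of \ref{pip40} that automorphisms are applied left to right, so $\alpha_v\alpha_{v'}$ means ``first $\alpha_v$, then $\alpha_{v'}$''. The whole verification reduces to matching explicit coordinate formulas.

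\emph{Items (iii) and (i).} I would compute each composite on the four distinguished vectors $x_1,x_1',x_2,x_2'$ and on $L$. In Case~I the only identity needed is $q(a+a')=q(a)+q(a')+f(a,a')$, which gives $\dot\alpha_a\dot\alpha_{a'}=\dot\alpha_{a+a'}$, and similarly for $\beta,\gamma,\delta$. In Case~II the same computation reproduces the twisted product: $\dot\alpha_{(a,t)}$ sends $x_1'$ to $x_1'+x_1t+a$, and composing two such maps returns $x_1'+x_1\bigl(t+s+f(b,a)\bigr)+(a+b)$, matching the group law $(a,t)(b,s)=(a+b,t+s+f(b,a))$ of $T$, while the map on $L$ checks out by additivity of $f$ in its first argument; for $\gamma,\delta$ the parameter group is $K^+$. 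Injectivity is immediate, since $\dot\alpha_v$ already moves the point $x_1'$ (its image spans a distinct $1$-space unless $v$ is trivial), and likewise $\dot\beta_w,\dot\gamma_t,\dot\delta_s$ move the points $x_2,x_2,x_1$. Thus each parametrizing map is an isomorphism onto its image, proving (iii); and since the image of a group homomorphism is a subgroup, (i) follows at once.

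\emph{Item (ii).} For the root $\alpha=(x_2',x_1x_2',x_1,x_1x_2,x_2)$, the group $U_\alpha$ is the pointwise stabilizer of $\Gamma_{x_1x_2'}\cup\Gamma_{x_1}\cup\Gamma_{x_1x_2}$. Since $\dot\alpha_v$ fixes $x_1$, $x_2$ and $x_2'$, it fixes every point of the lines $x_1x_2'$ and $x_1x_2$, which are spanned by fixed vectors. A line through the point $x_1$ is $\langle x_1,w\rangle$ with $F(x_1,w)=0$, i.e. with vanishing $x_1'$-coordinate; a one-line computation then gives $\dot\alpha_v(w)\equiv w\pmod{x_1}$, so this line is fixed as well. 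Hence $A\subset U_\alpha$, and $B\subset U_\beta$, $C\subset U_\gamma$, $D\subset U_\delta$ are verified identically.

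\emph{Items (iv)--(vii).} The decisive point is that each opposite-vertex set is exactly an orbit of the relevant one-parameter family on a base vertex. For (iv), a line $\langle x_2,w\rangle$ through $x_2$ is opposite $x_1x_2$ at $x_2$ iff $\langle x_1,x_2,w\rangle$ is non-singular, i.e. iff the $x_1'$-coordinate of $w$ is nonzero; normalizing $w$ modulo $x_2$ and scaling gives $w=x_1'-q(a)x_1+a=\dot\alpha_a(x_1')$ for a unique $a$ (and analogously in Case~II). These lines are therefore precisely $\{\alpha_v(x_1'x_2)\}$, so $A$ acts transitively (indeed simply transitively) with base point $x_1'x_2$; the same $w$ gives transitivity on the lines through $x_2'$ opposite $x_1x_2'$, with base point $x_1'x_2'$. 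Claim (v) is the mirror computation for $B$, with the roles of $x_1,x_1'$ and $x_2,x_2'$ interchanged. For (vi) and (vii) the relevant sets consist of the points on a single line, where $\dot\gamma_t,\dot\delta_s$ act by the affine formulas $\langle sx_1+x_2\rangle\mapsto\langle(s+t)x_1+x_2\rangle$ and $\langle x_1+bx_2'\rangle\mapsto\langle x_1+(b+s)x_2'\rangle$ (with analogous formulas on the second line), so transitivity is immediate, with base points $x_2,x_1'$ for $C$ and $x_1,x_2$ for $D$. The main obstacle here is organizational rather than conceptual and lives entirely in Case~II: one must carry the skew-hermitian $f$, the noncommutativity of $K$, and the fact that $Q$ (hence $q$) is defined only modulo $K_0$ through every step, and confirm that the composites reproduce the twisted multiplication of $T$ rather than plain addition. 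The relation $\{t+t^\sigma\mid t\in K\}\subset K_0$ stressed in \ref{chin3} is precisely what keeps the intermediate expressions inside $T$ and inside the isotropic subspaces; otherwise both cases run in parallel, and the real content of (iv)--(vii) is the single clean fact that each opposite-vertex set is the orbit of a distinguished vertex under the matching one-parameter family.
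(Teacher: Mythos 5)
Your proposal is correct and follows exactly the route the paper intends: the paper offers no written proof of \ref{foo3} beyond the remark that it "can be checked with a bit of calculation," and your outline is precisely that calculation, correctly organized (the group laws of $T$ and $K^+$ for (i) and (iii), pointwise fixing of the three middle neighborhoods for (ii), and the identification of each opposite-vertex set with an orbit of the corresponding one-parameter family for (iv)--(vii)). The coordinate details you cite -- the identity $q(a+a')=q(a)+q(a')+f(a,a')$, the twisted product in Case~II, and the normalization of an isotropic $w$ with nonzero $x_1'$-coordinate to $\dot\alpha_a(x_1')$ modulo the base point -- all check out.
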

Now let $M$ denote the group generated by $\rho$, $\tau$, $A$, $B$, $C$ and $D$.
By \ref{foo3}(iv)--(vii), the subgroup $\langle A,B,C,D\rangle$ of $M$ acts transitively 
both on the set
of roots of $\Gamma$ starting with $(x_2',x_1x_2')$ and on the set of roots of $\Gamma$
starting with $(x_1x_2',x_2')$. 
By \ref{chin1y} and \ref{foo3}(iv) and (vii), the subgroup $\langle A,\tau\rangle$ acts transitively
on $\Gamma_{x_2'}$ and the subgroup $\langle C,\rho\rangle$ acts transitively
in $\Gamma_{x_1x_2'}$. Since $x_2'$ and $x_1x_2'$ are adjacent vertices and $\Gamma$ is connected,
it follows that $M$ acts transitively on the set of edges of $\Gamma$. 
By \ref{foo0}, we conclude that 
every root of $\Gamma$ is in the same $M$-orbit as either $\alpha$ or $\gamma$.
Hence by \ref{chin1x} and \ref{foo3}(ii), (iv) and (vii),
$\Gamma$ is Moufang. By \ref{chin2}, therefore, $\Gamma$ is a Tits quadrangle.
By \ref{foo3}(ii) and \cite[1.3.25]{TP}, finally, we have
\begin{equation}\label{chin3a}
A=U_\alpha,\ B=U_\beta,\ C=U_\gamma\text{ and }D=U_\delta.
\end{equation}
\end{example}

\begin{definition}\label{chin4}
Let $\Lambda$, $T$ and $f$ be as in 
{\rm\ref{chin3}} (so $\Lambda$ is non-degenerate
and if $K_0=K$ in Case~II, then ${\rm char}(K)\ne2$) and let $K^+$ be as in \ref{foo3}.
Let $\Delta$ be a Tits quadrangle, let $(\Sigma,i\mapsto w_i)$ be a coordinate
system for $\Delta$ and let $i\mapsto U_i$ be the corresponding root group 
labeling as defined in \cite[2.3]{tq}. The root groups $U_1,\ldots,U_4$ satisfy
the commutator conditions in \cite[2.5]{tq}. In particular, $[U_i,U_{i+1}]=1$ for all $i$. 
\begin{enumerate}[\rm(i)]
\item $\Delta$ is called {\it orthogonal of type $\Lambda$} if $\Lambda=(K,L,q)$ is
a quadratic space and, after replacing the coordinate system by its opposite 
(as defined in \cite[2.3]{tq}) if necessary, there exist
isomorphisms $x_i\colon K^+\to U_i$ for $i=1$ and $3$ and $x_i\colon T\to U_i$
for $i=2$ and $4$ such that
\begin{align*}
[x_2(a),x_4(b)^{-1}]&=x_3(f(a,b))\text{ and}\\
[x_1(t),x_4(a)^{-1}]&=x_2(ta)x_3(tq(a))
\end{align*}
for all $t\in K$ and all $a,b\in L$ and $[U_1,U_3]=1$.
\item $\Delta$ is called {\it pseudo-quadratic of type $\Lambda$} if
$\Lambda=(K,K_0,\sigma,L,q)$ is a pseudo-quadratic space 
and, after replacing the coordinate system by its opposite if necessary,
there exist
isomorphisms $x_i\colon T\to U_i$ for $i=1$ and $3$ and $x_i\colon K^+\to U_i$
for $i=2$ and $4$ such that 
\begin{align*}
[x_1(a,t),x_3(b,s)^{-1}]&=x_2(f(a,b)),\\
[x_2(v),x_4(w)^{-1}]&=x_3(0,\ v^\sigma w+w^\sigma v)\text{ and}\\
[x_1(a,t),x_4(v)^{-1}]&=x_2(tv)x_3(av,\ v^\sigma tv)
\end{align*}
for all $(a,t),(b,s)\in T$ and all $v,w\in K$. 
\end{enumerate}
\end{definition}

Here now is our existence result for orthogonal and pseudo-quadratic Tits quadrangles.

\begin{theorem}\label{chin5}
Suppose that $\Lambda$ either a non-degenerate quadratic space 
or that $\Lambda=(K,K_0,\sigma,L,q)$ is a non-degenerate pseudo-quadratic space
such that ${\rm char}(K)\ne2$ if $K_0=K$ and 
let $\Gamma$ be as in {\rm\ref{chin3}}. 
Then $\Gamma$ an orthogonal Tits quadrangle of type $\Lambda$ if $\Lambda$ is a quadratic space
and $\Gamma$ is an pseudo-quadratic Tits quadrangle of type $\Lambda$ if $\Lambda$ is 
a pseudo-quadratic space.
\end{theorem}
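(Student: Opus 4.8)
The plan is to exhibit an explicit coordinate system for the Tits quadrangle $\Gamma$ of \ref{chin3} whose four fundamental root groups are exactly the groups $A,B,C,D$, to identify the root group parametrizations $x_i$ of \ref{chin4} with the isomorphisms of \ref{foo3}(iii), and then to verify the commutator relations of \ref{chin4} by direct computation with the linear automorphisms defined in \ref{foo1} and \ref{foo2}. By \eqref{chin3a} we already know that $A=U_\alpha$, $B=U_\beta$, $C=U_\gamma$ and $D=U_\delta$, so no identification of root groups beyond the indexing remains, and the content reduces to the commutator calculations.

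First I would set up the coordinate system. The four roots $\alpha,\gamma,\beta,\delta$ are precisely the four roots of the apartment $\Sigma$ that contain the edge $\{x_1x_2',x_2'\}$, and they occur consecutively as one rotates a length-$4$ window once around the $8$-circuit $\Sigma$; hence they serve as the fundamental roots of a coordinate system $(\Sigma,i\mapsto w_i)$ in the sense of \cite[2.3]{tq}. To fix the labeling I would match isomorphism types: by \ref{foo3}(iii), $A$ and $B$ are isomorphic to $T$ while $C$ and $D$ are isomorphic to $K^+$, and since the roots with point endpoints ($\alpha,\beta$) and those with line endpoints ($\gamma,\delta$) alternate around $\Sigma$, the types $K^+$ and $T$ necessarily alternate along $U_1,U_2,U_3,U_4$. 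In Case~I the definition requires $U_1,U_3\cong K^+$ and $U_2,U_4\cong T$, which I would achieve by passing to the opposite coordinate system and setting $U_1=D$, $U_2=B$, $U_3=C$, $U_4=A$; in Case~II the roles of $K^+$ and $T$ are interchanged and the labeling $U_1=A$, $U_2=C$, $U_3=B$, $U_4=D$ is the appropriate one. The maps $x_i$ are then taken to be the corresponding isomorphisms $v\mapsto\delta_v,\ v\mapsto\beta_v,\dots$ from \ref{foo3}(iii).

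With the labeling fixed, the relations $[U_i,U_{i+1}]=1$ are automatic for a Tits quadrangle coordinate system (\cite[2.5]{tq}), and the extra vanishing $[U_1,U_3]=1$ demanded in Case~I (that is, $[C,D]=1$) is a short direct check on $x_1,x_1',x_2,x_2'$ and $L$. The substantive relations I would verify by composing the relevant linear automorphisms of $V=K^4\oplus L$ from \ref{foo1} (Case~I) or \ref{foo2} (Case~II), using $[g,h]=g^{-1}h^{-1}gh$ together with the left-to-right action, and reading off the image of each basis vector. For instance, in Case~I one finds $[\beta_a,\alpha_b^{-1}]=\gamma_{f(a,b)}$ (checking $x_2\mapsto x_2+f(a,b)x_1$ and $x_1'\mapsto x_1'-f(a,b)x_2'$, where $f(a,a)=2q(a)$ and the symmetry of $f$ make the extra terms cancel), which is exactly the first relation of \ref{chin4}(i); the second relation $[\delta_t,\alpha_a^{-1}]=\beta_{ta}\gamma_{tq(a)}$ is obtained the same way, and the three relations of \ref{chin4}(ii) are verified by the identical procedure applied to the maps of \ref{foo2}.

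The main obstacle will be the convention bookkeeping rather than any conceptual difficulty. One must keep straight the commutator convention combined with the left-to-right action, the placement of scalars (on the left in Case~I, on the right in Case~II), the involution $\sigma$ and the sign $\varepsilon$ entering $\rho$ and $\tau$ in \ref{foo0}, and---most delicately---the twisted group law and the $K_0$-coset conditions on $T$ in Case~II, where the mixed relation $[x_1(a,t),x_4(v)^{-1}]=x_2(tv)x_3(av,\ v^\sigma tv)$ couples both coordinates of $T$ and relies on the inclusion $\{t+t^\sigma\}\subset K_0$ emphasized in \ref{chin3}. A secondary point of care is confirming that the opposite coordinate system is the correct choice in Case~I (and the non-opposite one in Case~II), so that the computed commutators reproduce the stated relations exactly rather than only up to a relabeling.
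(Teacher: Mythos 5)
Your proposal matches the paper's proof essentially exactly: the paper fixes the coordinate system $(\Sigma,i\mapsto w_i)$ with precisely your labelings ($x_1=\delta$, $x_2=\beta$, $x_3=\gamma$, $x_4=\alpha$ in Case~I and $x_1=\alpha$, $x_2=\gamma$, $x_3=\beta$, $x_4=\delta$ in Case~II), invokes \ref{foo3}(iii) and \eqref{chin3a} to see that each $x_i$ is an isomorphism onto $U_i$, and then verifies the commutator relations of \ref{chin4} by direct computation with the linear maps of \ref{foo1} and \ref{foo2}. The only difference is that the paper leaves the final computation to ``a bit of patience,'' whereas you sketch a representative instance of it.
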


\begin{proof}
We continue with all the notation in \ref{chin3}, where we already showed that
$\Gamma$ is a Tits quadrangle. In Case~I, let
$$(w_1,w_2,\ldots,w_8)=(x_1'x_2,x_1',x_1'x_2',x_2',x_1x_2',x_1,x_1x_2,x_2),$$
let $x_1(t)=\delta_t$ and $x_3(t)=\gamma_t$
for all $t\in K$ and let $x_2(a)=\beta_a$ and $x_4(a)=\alpha_a$ for all $a\in L$.
In Case~II, let 
$$(w_1,w_2,\ldots,w_8)=(x_2,x_1x_2,x_1,x_1x_2',x_2',x_1'x_2',x_1',x_1'x_2),$$
let  $x_1(a,t)=\alpha_{(a,t)}$ 
and $x_3(a,t)=\beta_{(a,t)}$ for all $(a,t)\in T$ and let 
$x_2(v)=\gamma_v$ and $x_4(v)=\delta_v$ for all $v\in K$. 
In both Case~I and Case~II, $(\Sigma,i\mapsto w_i)$ is a coordinate system of $\Gamma$.
Let $i\mapsto U_i$ be the corresponding root group labeling.
By \ref{chin3}(iii) and \eqref{chin3a},
$x_i$ is injective and its image is $U_i$ for all $i\in[1,4]$ both in Case~I and in Case~II.
It now requires only the formulas in \ref{foo1}, \ref{foo2} and a bit of patience to check 
that the commutator relations in (i) and (ii) hold.
\end{proof}

We now describe two further families of thick non-degenerate polar spaces of rank at least~$3$.

\begin{definition}\label{chin50}
Let $S$ be the thick non-degenerate polar space of rank~$3$ described in \cite[7.13]{spherical}
and let $\Gamma=\Gamma_S$ denote the corresponding Veldkamp quadrangle.
Then $\Gamma$ is a Tits quadrangle and there exist a skew-field $K$,
a coordinate system $(\gamma,i\mapsto w_i)$ with root group labeling $i\mapsto U_i$
and isomorphisms $x_i\colon K^+\to U_i$ for $i=1$ and $3$ 
and $x_i\colon K^+\oplus K^+\to U_i$ for $i=2$ and $4$ such that
\begin{align*}
[x_1(u),x_4(s,t)^{-1}]&=x_2(su,ut)x_3(sut)\\
[x_2(s,t),x_4(u,v)^{-1}]&=x_3(ut+sv)
\end{align*}
for all $u,v,s,t\in K$ as well as $[U_1,U_3]=[U_1,U_2]=[U_2,U_3]=[U_3,U_4]=1$. 
We call $\Gamma$ the Tits quadrangle {\it of type $D_3$} over $K$.
\end{definition}

\begin{remark}\label{chin50x}
Let $S$, $\Gamma$ and $K$ be as in \ref{chin50}.
If $K$ is commutative, then
$\Gamma$ is, in fact, of orthogonal type $\Lambda$ where $\Lambda$ is the hyperbolic
quadratic space of dimension~$2$ over $K$. When $K$ is non-commutative,
$S$ is non-embeddable and $\Gamma$ is not of orthogonal or pseudo-quadratic
type.
\end{remark}

\begin{definition}\label{chin60}
Let $\Gamma$ be the Tits quadrangle
obtained by applying the construction described 
in \cite[1.2.12 and 1.2.28]{TP} to the pair $(\Delta,T)$, where $\Delta$ is the 
exceptional building $C_3(J)$ for some octonion division algebra $J$ obtained by Galois
descent from the Tits index 
$$\includegraphics[scale=.7]{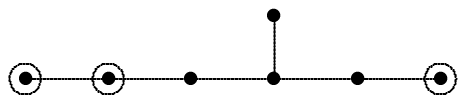}$$
and $T$ is the Tits index 
$$\includegraphics[scale=.7]{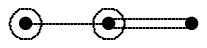}$$
and ${\rm rk}(S_\Gamma)=3$. We call $\Gamma$ the Tits quadrangle {\it of type $E_7$} over $J$.
Let $K$ denote the center of $J$.
Then the commutator relations associated with $\Gamma$ are
as described in \cite[8.2]{tq} with $\Xi$ set equal to the quadrangular algebra 
${\mathscr Q}_2(J,K)$ described in \cite[\S4]{alg}. 
\end{definition}

We can now formulate the classification of flat green Veldkamp quadrangles.

\begin{theorem}\label{chin6}
Let $\Gamma$ be a flat green Veldkamp quadrangle. Then $\Gamma$ is either 
a generalized quadrangle or one of the following holds:
\begin{enumerate}[\rm(i)]
\item $\Gamma$ is a Tits quadrangle of orthogonal of type $\Lambda$ as defined in {\rm\ref{chin4}}
for some non-degenerate quadratic space $\Lambda$
with Witt index at least~$3$.
\item $\Gamma$ is a Tits quadrangle of pseudo-quadratic of type $\Lambda$ as defined in {\rm\ref{chin4}}
for some non-degenerate quadratic space $\Lambda$
with Witt index at least~$3$.
\item $\Gamma$ is a Tits quadrangle of type $D_3$ or $E_7$ as defined in {\rm\ref{chin50}}
and {\rm\ref{chin60}}.
\end{enumerate}
In particular, if $\Gamma$ is not a generalized polygon, it is Moufang as
defined in {\rm\ref{chin1x}}.
\end{theorem}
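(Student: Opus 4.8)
The plan is to reduce the entire statement to the classification of polar spaces by means of the correspondence set up in \ref{hau13}. The first step is to invoke \ref{hau13}(i) to pass from $\Gamma$ to the thick non-degenerate polar space $S=S_\Gamma=(P,L_1)$ with $L_1\ne\emptyset$ that it determines, using \ref{hau13}(ii) to recover $\Gamma=\Gamma_S$ from $S$. By \ref{hau13}(iii), $\Gamma$ is a generalized quadrangle exactly when ${\rm rk}(S)=2$; so one disposes of that alternative immediately and is left to show that when ${\rm rk}(S)\ge3$ the quadrangle $\Gamma$ is one of the Tits quadrangles listed in (i)--(iii).

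The substance of the argument is then the classification of thick non-degenerate polar spaces of rank at least~$3$, due to Veldkamp \cite{V}, extended to arbitrary characteristic by Tits \cite{spherical} and to infinite rank in \cite{shult}. This classification separates such $S$ into the \emph{embeddable} spaces, arising from a non-degenerate (pseudo-)quadratic or sesquilinear form, and the non-embeddable rank-$3$ exceptions. For an embeddable $S$ I would match it with the construction of \ref{chin3}: decomposing the defining form as an orthogonal sum of two hyperbolic planes with a residual (pseudo-)quadratic space $\Lambda$, one recognizes $S$ as $S_\Lambda$, and the hypothesis ${\rm rk}(S)\ge3$ reappears as the stated Witt-index condition. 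The symplectic and hermitian possibilities are absorbed into Case~II of \ref{chin3} precisely as arranged there. Applying \ref{chin5} then identifies $\Gamma=\Gamma_{S_\Lambda}$ as an orthogonal Tits quadrangle of type $\Lambda$ in the quadratic case and as a pseudo-quadratic Tits quadrangle of type $\Lambda$ in the pseudo-quadratic case, yielding (i) and (ii). For the non-embeddable rank-$3$ exceptions --- the spaces over a non-commutative skew-field and those over an octonion division algebra --- I would appeal directly to \ref{chin50} and \ref{chin60}, where $\Gamma_S$ is shown to be the Tits quadrangle of type $D_3$ or of type $E_7$; this gives (iii).

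The final assertion then follows with no extra work: in every case with ${\rm rk}(S)\ge3$ the quadrangle $\Gamma$ has been exhibited as a Tits quadrangle, so it is Moufang in the sense of \ref{chin1x} by \ref{chin2}.

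I expect the main obstacle to lie not in the polar-space classification itself, which is quoted wholesale, but in the bookkeeping that translates its output into the language of this paper. One must check that the list of isomorphism types it produces is in exact correspondence with the families of \ref{chin3}, \ref{chin50} and \ref{chin60}, with no type omitted or counted twice, and that all invariants line up --- in particular that the orthogonal/pseudo-quadratic dichotomy, the place of the symplectic and hermitian subcases, and the dictionary between the rank of $S$ and the Witt index of the defining form are all consistent with \ref{chin4} and \ref{chin5}. Confirming that the non-embeddable rank-$3$ spaces give exactly the $D_3$ and $E_7$ quadrangles (noting, via \ref{chin50x}, that the commutative $D_3$ case falls back into the orthogonal family) is the most delicate part of this matching.
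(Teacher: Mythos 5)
Your proposal is correct and follows essentially the same route as the paper: reduce to the polar space $S_\Gamma$ via \ref{hau12} and \ref{hau13}, invoke the classification of thick non-degenerate polar spaces of rank at least~$3$ (\cite{V}, \cite{spherical}, \cite{shult}), and match the embeddable cases to \ref{chin3} and \ref{chin5} and the non-embeddable rank-$3$ cases to \ref{chin50} and \ref{chin60}. The paper merely organizes the case split by rank ($\ge4$ versus $=3$, the latter handled through thick buildings of type $C_3$) rather than by embeddability, which is an inessential difference.
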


\begin{proof}
If $\Gamma$ is not a generalized quadrangle, then by
\ref{hau12}, the rank of the polar space $S_\Gamma$ described in \ref{hau13} is at least~$3$.
By \cite[7.9.7]{shult}, $\Gamma$ is as in (i) or (ii) if ${\rm rk}(S_\Gamma)\ge4$
including the case that ${\rm rk}(S_\Gamma)$ is infinite. We can thus suppose
that ${\rm rk}(S_\Gamma)=3$. In this case, the claim holds by \cite[Thms.~8.22 and~9.1]{spherical}.
Alternatively, we can cite \cite[40.17 and 40.25(ii)-(iv)]{TW} for 
the classification of thick buildings of type~$C_3$. If $\Gamma$ is not of type $D_3$,
then by \cite[Prop.~7.13]{spherical}, the building of type~$C_3$ in \cite[Thm.~7.4]{spherical}
is thick. It follows from the classification of thick buildings of type $C_3$ 
that either $\Gamma$ is as in (i) or (ii) or $\Gamma$ is of type $E_7$.
\end{proof}

\begin{remark}\label{chin6a}
Moufang quadrangles (i.e.~generalized polyons satisfying the Moufang condition)
were classified in \cite[Thm.~17.4]{TW}. The Moufang quadrangles
described in \cite[16.2, 16.3 and 16.5]{TW} are precisely the Tits quadrangles of orthogonal and
pseudo-quadratic type as defined in \ref{chin4} for $\Lambda$ is anisotropic,
but there are also three further families:
the indifferent quadrangles described in \cite[16.4]{TW}, the exceptional quadrangles of type
$E_6$, $E_7$ and $E_8$ described in \cite[16.6]{TW} and the exceptional quadrangles of type
$F_4$ described in \cite[16.7]{TW}. There are, of course, also many generalized quadrangles
that do not satisfy the Moufang condition; see, for example,~\cite{hendrik}.
\end{remark}

\begin{remark}\label{fin1}
Let $\Gamma$ be a Tits $n$-gon for some $n$. As in \ref{chin4}, 
we choose a coordinate system $(\gamma,i\mapsto w_i)$ and
let $i\mapsto U_i$ be the corresponding root group labeling.
Let $U_i^\sharp$ for all $i$ and $\mu_\gamma$
be as in \cite[2.8 and 2.9]{tq} and let
$$H_i=\langle\mu_\gamma(a)\mu_\gamma(b)\mid a,b\in U_i^\sharp\rangle$$
for each $i$. As in \cite[2.23]{orth}, we say that $\Gamma$ is {\it razor-sharp} if
for each $i$, every nontrivial $H_i$-invariant normal subgroup of the root group $U_i$
contains elements of $U_i^\sharp$. 
In \cite[1.6.14]{TP}, we showed that if $\Gamma$ is razor-sharp, then
$n=3$, $4$, $6$ or $8$. In \cite{TP}, \cite{triangles} and
\cite{oct}, razor-sharp Tits triangles, hexagons and octagons were classified.
Partial results on the case $n=4$ can be found in \cite{tq} and \cite{orth}.
In particular, it is shown in \cite[3.11]{orth} that every normal $5$-plump
razor-sharp Tits quadrangle
is orthogonal, where ``normal'' is defined in analogy to
the eponymous notion in \cite[21.7]{TW}. 
It is perhaps of interest to observe that many of the difficulties
in the proof of this result stem from not knowing that the underlying 
Veldkamp quadrangle is green and that this fact emerges in the proof of \cite[3.11]{orth} only
at the very end. (It is shown in \cite[3.10]{orth} that,
conversely, every orthogonal Tits quadrangle is razor-sharp except for those
that are of type~$D_3$; see \ref{chin50x}.)
\end{remark}

\begin{remark}\label{fin2}
Let $\Gamma$, $(\gamma,i\mapsto w_i)$ and $i\mapsto U_i$ be as in
\ref{chin50}. Then the following observations can be shown to hold:
There is an isomorphism $x_0\colon K^+\oplus K^+\to U_0$ such that
\begin{align*}
[x_0(s,t),x_3(u)^{-1}]&=x_1(tus)x_2(us,tu)\\
[x_0(s,t),x_2(u,v)^{-1}]&=x_1(tu+vs)
\end{align*}
for all $s,t,u,v\in K$. We have $x_4(s,t)\in U_4^\sharp$ 
if and only if $st\in K^\times$ and if $st\in K^\times$, then 
$$\mu(x_4(s,t))=x_0(t^{-1},s^{-1})x_4(s,t)x_0(t^{-1},s^{-1})$$
and conjugation by $\mu(x_4(s,t))$ induces the maps
\begin{align*}
x_0(u,v)&\mapsto x_4(svs,tut)\\
x_1(u)&\mapsto x_3(sut)\\
x_2(u,v)&\mapsto x_2(-svt^{-1},-s^{-1}ut)\\
x_3(u)&\mapsto x_1(s^{-1}ut^{-1})\\
x_4(u,v)&\mapsto x_0(t^{-1}vt^{-1},s^{-1}us^{-1})
\end{align*}
for all $u,v\in K$. Thus $\{x_4(u,0)\mid u\in K\}$ is a nontrivial $H_4$-invariant subgroup of $U_4$
that is disjoint from $U_4^\sharp$. Hence the Tits quadrangle $\Gamma$ is not razor-sharp.

For the sake of completeness, we record also the following:
There is an isomorphism $x_5\colon K^+\to U_5$ such that
$$[x_2(s,t),x_5(u)^{-1}]=x_3(sut)x_4(su,ut)$$
for all $s,t,u\in K$. We have $x_1(u)\in U_1^\sharp$ if and only if $u\in K^\times$ and 
if $u\in K^\times$, then
$$\mu_\gamma(x_1(u))=x_5(u^{-1})x_1(u)x_5(u^{-1})$$ 
and conjugation by $\mu_\gamma(x_1(u))$ induces the maps
\begin{align*}
x_1(s)&\mapsto x_5(u^{-1}su^{-1})\\
x_2(s,t)&\mapsto x_4(-su^{-1},-u^{-1}t)\\
x_3(s)&\mapsto x_3(s)\\
x_4(s,t)&\mapsto x_2(su,ut)\\
x_5(s)&\mapsto x_1(usu)
\end{align*}
for all $s,t\in K$.
\end{remark}

\end{document}